\let\nc\newcommand
\nc{\la}{\label}
\newtheorem{conjecture}{Conjecture}
\newtheorem{theorem}{Theorem}[section]
\newtheorem{definition}[theorem]{Definition}
\newtheorem{corollary}[theorem]{Corollary}
\newtheorem{lemma}[theorem]{Lemma}
\newtheorem{proposition}[theorem]{Proposition}
\newtheorem{example}[theorem]{Example}
\newtheorem{remark}[theorem]{Remark}
\def\k{\mathsf k}
\def\C{\mathbb C}
\newcommand{\Frac}{{\rm{Frac}}}
\newcommand{\Aut}{{\rm{Aut}_{\c}}}
\title{Holonomic modules for rings of invariant differential operators}
\author{ Vyacheslav Futorny and Jo\~ao Schwarz}
\begin{document}
\maketitle

%{Representations of invariants of generalized Weyl algebras}

\begin{abstract}
We study holonomic modules for the rings of invariant  differential operators  on affine commutative domains with finite Krull dimension with respect to arbitrary actions of finite groups.
 We prove the Bernstein inequality for these rings. Our main tool is  the filter dimension introduced by Bavula. 
  We extend the results for the invariants of the Weyl algebra with respect to the symplectic action of a finite group,  for the rings of invariant differential operators on quotient varieties, and invariants of certain generalized Weyl algebras under the linear actions.
We show that the filter dimension  of all above mentioned algebras equals $1$.

\medskip

\noindent {\bf Keywords: Filter dimension, holonomic modules, generalized Weyl algebras, invariant differential operators} 
\medskip

\noindent {\bf 2020 Mathematics Subject Classification:  16P90, 16S32, 16D30, 16W70
}
\medskip

\end{abstract}

\section{Introduction}
Let $\k$ be the base field. All rings in the paper are $\k$-algebras. All modules are left modules, unless said otherwise.

In this paper we address representations of   subrings of invariants of generalized Weyl algebras. The later class of algebras was introduced and studied in \cite{B0}. Many important algebras of small Gelfand-Kirillov dimension arising in noncommutative geometry  are generalized Weyl algebras, such as the first Weyl algebra and its quantization; the quantum plane and the quantum sphere; $U(sl_2(\k))$ and its quantization; the Heisenberg algebra and its quantizations; quantum $2 \times 2$ matrices; Witten's and Woronowic's deformations; Noetherian down-up algebras (cf. \cite{BY}). For the representation theory of generalized Weyl algebras we refer to \cite{B0}, \cite{B2}, \cite{BX}, \cite{B5}, \cite{B6}.

We will consider a category of holonomic modules for certain  subrings of invariants of generalized Weyl algebras under the action of  finite groups.

For an arbitrary algebra $A$ denote by $GK(A)$ the Gelfand-Kirillov dimension of $A$. Then 
a finitely generated $A$-module $M$ is called \emph{holonomic} (cf. \cite{Krause}, \cite{McConnell}) if $$GK(M) =\frac{1}{2} GK (A/Ann(M)).$$ 

Assume $char \, k =0$ and consider the $n$-th Weyl algebra $A_n(\k)$. Then $A_n(\k)$ is isomorphic to  the ring of differential operators on the polynomial algebra $\k[x_1,\ldots,x_n]$, or equivalently on the affine space $\mathbb A^n$. It has
   generators $x_1,\ldots, x_n, y_1,\ldots, y_n$ and defining relations 
 $$[x_i,x_j]=[y_i,y_j] = 0; [y_i,x_j]= \delta_{ij},$$ $i,j=1,\ldots, n$. Note that $GK (A_n(\k))= 2n$ and  for every finitely generated $A_n(\k)$-module $M$ holds 
  {\it the  Bernstein inequality:}  $GK (M) \geq n$ \cite{Bernstein}. 
  Holonomic $A_n(\k)$-modules
   are exactly the modules of the minimal Gelfand-Kirillov dimension, they constitute an important abelian subcategory of modules for the Weyl algebra (\cite{Borel}).  The Bernstein inequality holds also for the rings of differential operators on smooth affine varieties (\cite{Borel}; cf. Section 3 bellow).
   
Assume $\k$ algebraically closed  of characteristic 0 and let $\mathfrak{g}$ be an algebraic Lie algebra of finite dimension. Then  {\it the Gabber's inequality} $$GK \, (U(\mathfrak{g})/Ann(M)) \leq 2GK (M)$$ holds for any
 finitely generated $\mathfrak{g}$-module $M$   (cf. \cite{Jantzen}). The holonomic $\mathfrak{g}$-modules are those modules with the minimal Gelfand-Kirillov dimension
 (cf. \cite{Krause}, Chapter 9). However, if $\mathfrak{g}$ is not algebraic then it is known that the Gabber's inequality does not hold \cite{McConnell2}.

To study holonomic modules and analogues of the Bernstein inequality for infinite-dimensional affine simple algebras over an arbitrary field, Bavula \cite{B2} introduced the notion of the {\it filter dimension}, denoted here by $fdim$ 
 (see also \cite{B3} for details and applications). 
 Our first  main result gives the
  filter dimension of certain algebras of invariant differential operators with respect to finite groups action.
 
\begin{theorem}\label{thm-main}  The filter dimension equals $1$
for the following algebras:
\begin{itemize}
\item $\mathcal{D}(A)^G$, where $A$ is an affine regular commutative domain with the finite Krull dimension and $G$ is a finite group of automorphisms of $A$;
\item $A_n(\k)^G$, where $G$  a finite group of symplectic automorphisms of the Weyl algebra $A_n(\k)$; 
\item $\mathcal{D}(\mathbb{A}^n/G)$, $\k$ is algebraically closed and 
$G$ is a finite group of linear automorphisms of the affine space $\mathbb{A}_{\k}^n$.
\end{itemize}
\end{theorem}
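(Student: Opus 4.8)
The plan is to reduce all three cases to a single mechanism: if $R$ is a simple, finitely generated $\k$-algebra with $\mathrm{fdim}(R)=1$, and $G$ is a finite group acting faithfully by \emph{outer} automorphisms with $|G|$ invertible in $\k$, then $\mathrm{fdim}(R^{G})=1$. The two inputs for this are Bavula's computation that $\mathrm{fdim}(A_n(\k))=1$ and, more generally, $\mathrm{fdim}(\mathcal{D}(A))=1$ for a regular affine commutative domain $A$ of finite Krull dimension (the filter-dimension form of the Bernstein inequality recalled in Section 3), together with the facts that the filter dimension is a Morita invariant and is preserved under finite normalizing extensions.

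For the reduction itself I would argue through the skew group algebra $R\ast G$. Since $R$ is simple, $G$ is finite, and $G$ acts by outer automorphisms, $R\ast G$ is again simple, and the averaging element $e=\frac{1}{|G|}\sum_{g\in G}g$ is a full idempotent (any nonzero idempotent is full in a simple ring) with $e(R\ast G)e\cong R^{G}$. Hence $R^{G}$ is Morita equivalent to $R\ast G$, and in particular $R^{G}$ is simple, as needed for the Bernstein machinery. Now $R\ast G=\bigoplus_{g\in G}Rg$ is free of rank $|G|$ as a left and as a right $R$-module, so $R\subseteq R\ast G$ is a finite normalizing extension; filtering $R\ast G$ by placing $G$ in degree zero gives $\mathrm{gr}(R\ast G)=\mathrm{gr}(R)\ast G$ and shows that the return function of $R\ast G$ agrees with that of $R$ up to the constant factor $|G|$. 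Combining Morita invariance with the finite-extension invariance of the filter dimension yields
$$\mathrm{fdim}(R^{G})=\mathrm{fdim}\big(e(R\ast G)e\big)=\mathrm{fdim}(R\ast G)=\mathrm{fdim}(R)=1.$$
The inequality $\mathrm{fdim}\geq 1$, valid for any infinite-dimensional finitely generated simple algebra, guarantees that $1$ is the genuine value and not merely an upper bound.

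It then remains to place each algebra into this framework. For $\mathcal{D}(A)^{G}$ one takes $R=\mathcal{D}(A)$, which is simple for $A$ regular affine, and checks that a faithfully acting finite group of automorphisms of $A$ induces outer automorphisms of $\mathcal{D}(A)$. For $A_n(\k)^{G}$ one takes $R=A_n(\k)$ and uses that a finite group of symplectic automorphisms acts by outer automorphisms on the simple ring $A_n(\k)$. For $\mathcal{D}(\mathbb{A}^{n}/G)=\mathcal{D}(\k[x_1,\dots,x_n]^{G})$ one must first identify this ring of operators on the (possibly singular) quotient with an invariant algebra of the previous type; using the Chevalley--Shephard--Todd theorem I would reduce to a small group $\bar G$ (containing no pseudo-reflections), for which $\k[x]\to\k[x]^{\bar G}$ is \'etale in codimension one and $\mathcal{D}(\k[x]^{\bar G})\cong\mathcal{D}(\k[x])^{\bar G}=A_n(\k)^{\bar G}$, returning to the second case.

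I expect the main obstacle to be precisely this last identification in the quotient-variety case: controlling $\mathcal{D}$ of the singular variety $\mathbb{A}^{n}/G$ and matching it, via known structural results of Levasseur--Stafford type, with the invariant ring to which the Morita machinery applies. A secondary technical point is verifying that the filter dimension is genuinely preserved under the finite normalizing extension $R\subseteq R\ast G$ in the exact form needed, rather than merely up to an inequality; this amounts to checking that Bavula's return function is insensitive to tensoring the filtration with the finite-dimensional group algebra $\k G$ placed in degree zero.
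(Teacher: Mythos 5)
Your upper bound argument is essentially the paper's: the paper also passes through the skew group ring $R\ast G$ with the symmetrizing idempotent $e=\frac{1}{|G|}\sum_{g}g$, proves that the return function of $R\ast G$ is dominated by that of $R$ (by symmetrizing: $ebe\in R^{G}$ and $1\in A_{k}\,ebe\,A_{k}$ implies $1\in B_{k}bB_{k}$), and then uses Bavula--Hinchcliffe Morita invariance between $R^{G}$ and $R\ast G$ to get $fdim\,R^{G}\leq fdim\,R=1$; your reduction of the third bullet via Chevalley--Shephard--Todd and Levasseur is also exactly the paper's Lemma on $\mathcal{D}(\mathbb{A}^{n}/W)\simeq A_n(\k)^{W'}$. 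The genuine gap is in your lower bound. You assert that $fdim\geq 1$ is ``valid for any infinite-dimensional finitely generated simple algebra.'' This is not a theorem, and the paper explicitly treats the regime $fdim\,A<1$ as unresolved (``The situation though is unclear if $fdim\,A<1$''). Bavula's inequality goes the wrong way for this purpose: a \emph{small} filter dimension gives a \emph{stronger} Bernstein-type bound, so nothing about simplicity alone prevents $fdim<1$. The only available tool for a lower bound (Bavula, Corollary 1.7(i), cited in the paper's Proposition on $fdim\,A^{G}\leq fdim\,A$) requires you to \emph{exhibit} a finitely generated module $M$ with $GK(M)\leq\frac{1}{2}GK(R^{G})$. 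This is precisely what the paper does and what your proposal omits: for $\mathcal{D}(A)^{G}$ the module is $A^{G}$, of Gelfand--Kirillov dimension $n=\frac{1}{2}GK(\mathcal{D}(A)^{G})$; for a general (not necessarily linear) symplectic action on $A_n(\k)$, where no such module is written down directly, the paper instead notes that $G$ preserves the Bernstein filtration, uses Morita invariance of the \emph{holonomic number} to get $h_{A_n(\k)^{G}}=h_{A_n(\k)\ast G}=n$, and uses that $A_n(\k)^{G}$ is somewhat commutative so this infimum is attained by an actual module; only then does Proposition \ref{heart} yield $fdim=1$.

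A secondary but related flaw: your claimed equality $fdim(R\ast G)=fdim(R)$ via ``finite normalizing extension invariance'' is not available. Only the inequality $fdim(R\ast G)\leq fdim(R)$ follows from the symmetrization argument. For the converse one would have to pass from $1\in B_{j}aB_{j}$ to $1\in A_{j'}aA_{j'}$; extracting the identity component of $\sum_k p_k a q_k$ with $p_k,q_k\in R\ast G$ produces terms involving the translates $g(a)$, $g\in G$, rather than $a$ itself, so the sketch breaks down. This would be harmless if the lower bound were supplied independently, but since your proposal derives the lower bound from the (unjustified) general principle $fdim\geq 1$, both halves of your concluding chain of equalities rest on the same missing ingredient: a finitely generated $R^{G}$-module of Gelfand--Kirillov dimension at most $\frac{1}{2}GK(R^{G})$.
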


We also extend the results of \cite{B4} for the invariants of generalized Weyl algebras under suitable actions of complex reflection groups of type $G(m,p,r)$. Namely, we have

\begin{theorem}\label{thm-main2}
Let $D(a, \sigma)$ be a generalized Weyl algebra rank $r$ of pure type and 
 $G=G(m,p,r)$. Then $fdim \, D(a, \sigma)^G = 1$.
\end{theorem}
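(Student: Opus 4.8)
The plan is to reduce Theorem~\ref{thm-main2} to the rank-one situation and then transport the filter dimension across the invariant construction, exactly as in the proof of Theorem~\ref{thm-main}. The starting point is the base algebra itself: since $D(a,\sigma)$ is a generalized Weyl algebra of pure type, it is a simple affine Noetherian domain, and its filter dimension equals $1$ by the computation of \cite{B4} (the rank-$r$ pure-type case being the iterated analogue of the rank-one one). This provides $fdim\,D(a,\sigma)=1$ as the input for the rest of the argument.

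Next I would make the $G(m,p,r)$-action explicit and check that it is \emph{X-outer}. The wreath product $G(m,1,r)=C_m\wr S_r$ acts on the rank-$r$ pure-type algebra by letting the diagonal part $C_m^{\,r}$ act on the individual rank-one factors through their gradings and letting $S_r$ permute the factors; the reflection subgroup $G(m,p,r)$ inherits this action. The essential verification is that no nonzero element of $D(a,\sigma)$ conjugates like a nontrivial element of $G$, i.e.\ the action has no inner part. Granting this, Kharchenko's theory shows that the skew group algebra $D(a,\sigma)\ast G$ is simple and that $D(a,\sigma)^G$ is a simple affine Noetherian algebra, which is precisely the setting in which the filter dimension governs the Bernstein inequality.

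The transport of the filter dimension then proceeds in two steps, relying on the general properties of $fdim$ recalled earlier in the paper. First, $D(a,\sigma)\ast G$ is a free module of rank $|G|$ over $D(a,\sigma)$, hence a finite normalizing extension, and the filter dimension is preserved under such extensions, so $fdim\,D(a,\sigma)\ast G=fdim\,D(a,\sigma)=1$. Second, because $|G|$ is invertible in $\k$ and $D(a,\sigma)\ast G$ is simple, the idempotent $e=\tfrac{1}{|G|}\sum_{g\in G}g$ satisfies $e\,(D(a,\sigma)\ast G)\,e\cong D(a,\sigma)^G$ and $(D(a,\sigma)\ast G)\,e\,(D(a,\sigma)\ast G)=D(a,\sigma)\ast G$, so $D(a,\sigma)\ast G$ and $D(a,\sigma)^G$ are Morita equivalent. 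Since the filter dimension is a Morita invariant, we conclude $fdim\,D(a,\sigma)^G=fdim\,D(a,\sigma)\ast G=1$.

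The main obstacle I expect is the X-outerness of the $G(m,p,r)$-action on the pure-type algebra: one must rule out that a rotation in $C_m^{\,r}$, or its combination with a permutation, is realised as conjugation by a unit of $D(a,\sigma)$, which is exactly what the grading torus of a generalized Weyl algebra threatens to produce. Once simplicity of $D(a,\sigma)^G$ (equivalently of $D(a,\sigma)\ast G$) is secured, the two transport steps are formal consequences of the behaviour of the filter dimension under finite normalizing extensions and Morita equivalence, so the reflection-group combinatorics enters only through this simplicity check.
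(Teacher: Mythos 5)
Your overall skeleton---outerness of the $G(m,p,r)$-action, hence (by Montgomery's results, Theorem \ref{ring}) simplicity of $D(a,\sigma)^G$ and a Morita equivalence $D(a,\sigma)^G \sim D(a,\sigma)\ast G$, then Morita invariance of the filter dimension (Theorem \ref{morita}), with $fdim\, D(a,\sigma)=1$ as input---is the same as the paper's. But your transport step contains a genuine gap: you assert that the filter dimension is preserved under finite normalizing extensions, so that $fdim\, D(a,\sigma)\ast G = fdim\, D(a,\sigma)$. No such preservation theorem is available, in this paper or in the sources it relies on; even Morita invariance of $fdim$ is a nontrivial theorem of Bavula--Hinchcliffe \cite{B4}, and if invariance under finite normalizing extensions were known, the paper's Proposition \ref{heart} would be pointless. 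What can be proved by elementary means is only \emph{one} inequality: the Lemma in Section \ref{invariants} shows, by symmetrizing with the idempotent $e=\frac{1}{|G|}\sum_{g\in G}g$, that the return function of $A\ast G$ is dominated by that of $A$, whence $fdim\, A^G = fdim\, A\ast G\le fdim\, A = 1$. Your argument gives nothing in the opposite direction: if $1=\sum_k u_k b v_k$ with $u_k,v_k$ in a low filtration piece of $A\ast G$ and $b\in A$, projecting onto the identity component of the skew group ring produces an expression of $1$ in terms of the conjugates $g(b)$, $g\in G$, not in terms of $b$ alone, so the return function of $A$ (equivalently, a lower bound for $fdim\,A^G$) is not controlled this way.

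The missing lower bound $fdim\, D(a,\sigma)^G\ge 1$ is obtained in the paper by a different mechanism, absent from your proposal. Since $G(m,p,r)$ preserves the standard finite-dimensional filtration $\mathcal{B}_A$ of the pure-type algebra $A=D(a,\sigma)$, the holonomic numbers satisfy $h_{A\ast G}=h_A=r$, and by Morita invariance of the holonomic number $h_{A^G}=r=\frac{1}{2}GK(A^G)$. Because Gelfand--Kirillov dimensions of finitely generated modules over an algebra with multiplicity are integers, this infimum is attained, so there is a finitely generated $A^G$-module of Gelfand--Kirillov dimension exactly $\frac{1}{2}GK (A^G)$; Bavula's inequality (Theorem \ref{BavBer}, in the form of \cite[Corollary 1.7(i)]{B3}) then forces $fdim\, A^G\ge 1$. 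This two-sided argument is exactly Proposition \ref{heart}, which is how the paper proves Theorem \ref{genWeyl}. Two lesser remarks: your citation of \cite{B4} for $fdim\, D(a,\sigma)=1$ is off (that paper concerns Morita invariance and Coxeter invariants of Weyl algebras; the computation for pure-type GWAs goes back to Bavula and Bavula--van Oystaeyen); and the outerness check you single out as the main obstacle is in fact the easy part---for a simple ring one only needs that no nontrivial element of $G$ is inner, and the units of a pure-type GWA are explicit enough (scalars in the classical case; scalar multiples of monomials in the $h_i^{\pm 1}$ in the quantum case, whose conjugation rescales $X_i$ by powers of $\lambda$, never by a nontrivial root of unity) to rule this out.
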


 With the idea of the filter dimension we have an alternative definition of holonomic modules.

\begin{definition}\label{def1}
Let $A$ be an affine infinite-dimensional simple algebra. The infimum $h_A$ of the set 
$$\{ GK(M) | M \mbox{is finitely generated} \, A\mbox{-module} \}$$ is called the holonomic number of $A$.
\end{definition}

 Suppose that $A$ is simple, $fdim \, A \geq 1$ and the infimum $h_A$ in Definition \ref{def1}  is actually the minimum.
 Then in this case the definition of a holonomic $A$-module above can be replaced  by the following: a finitely generated $A$-module $M$ is holonomic if 
  $GK (M)= h_A$ (cf. Theorem \ref{BavBer}).
   The situation though is unclear if $fdim \, A <1$. 

%Consider the Weyl algebra $A_n(\k)= \mathcal{D}(\k[x_1,\ldots,x_n])$. We are interested in finite groups $G$ of automorphisms of the Weyl algebra that act linearly: that is, are induced from finite groups of linear automorphisms $G$ (we use the same symbol in a slight abuse of notation) of $\k[x_1,\ldots,x_n]$, in the following way: if $g \in G$, $g.D$, where $ D \in \mathcal{D}(\k[x_1,\ldots,x_n])=A_n(\k)$, is the differential operator $gDg^{-1}$. If morover $\k$ is algebraically closed, we can consider the quotient of $\mathbb{A}^n = Spec \, \k[x_1,\ldots,x_n]$ by $G$, and the ring of differential operators on $\mathbb{A}^n/G$, which is in general a singular variety, except when $G$ is a pseudoreflection group, by Chevalley-Shephard-Todd theorem (\cite{Benson}, Thm 7.2.1). We are also interested in finite groups of symplectitc autmorphisms of the Weyl algebra --- those that fix the subspace $span \langle x_1, \ldots, x_n, y_1, \ldots, y_n \rangle$; and certain invariant subalgebras of generalized Weyl algebras.

It was shown in \cite{B4}  that, given a finite Coxeter group $W$ action on the Weyl algebra $A_n(\k)$, the Bernstein inequality holds for $A_n(\k)^W$: for every finitely generated $A_n(\k)^W$-module $M$, $GK (M) \geq n$.
 We generalize this result and prove the Bernstein inequality for  $A_n(\k)^G$ with linear action of an arbitrary  finite group $G$.  We note that our approach is different from the one in \cite{B4} (cf. Theorem \ref{Bernsteinnew} bellow). 
 We also prove a similar result for more general actions of finite groups of symplectic automorphisms on $A_n(\k)$. 
Moreover, we extend this result to the ring of invariant  differential operators $\mathcal{D}(A)^G$ on an arbitrary affine regular commutative domain $A$ with finite Krull dimension  and any finite group $G$.  Our approach relies on the computation of the filter dimension in order to illustrate its application. In particular,  all  algebras mentioned in Theorem \ref{thm-main} and Theorem \ref{thm-main2}
 have a "good" theory of holonomic modules:

\begin{theorem}\label{thm-main3} Let $A$ be an affine regular commutative domain with finite Krull dimension and $GK (A)=n$, and let 
  $B$ be  one of the algebras from Theorem \ref{thm-main} or Theorem \ref{thm-main2}.
% $A_n(\k)^G$, $D(\mathbb{A}^n/G)$ or $D(a, \sigma)^G$ satisfying the same hypothesis as in Theorems \ref{thm-main} and \ref{thm-main2}. 
 Then every finitely generated holonomic $B$-module  is a cyclic torsion module of finite length  and  their holonomic number is $n$.
\end{theorem}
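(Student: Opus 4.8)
The plan is to funnel the three algebra families into Bavula's filter-dimension machinery, using the equality $fdim(B)=1$ supplied by Theorems \ref{thm-main} and \ref{thm-main2}. The two additional structural inputs I need are the value of $GK(B)$ and the simplicity of $B$. For the first, I would argue $GK(B)=2n$ uniformly: one has $GK(\mathcal{D}(A))=2\,GK(A)=2n$, and since $G$ is finite, $\mathcal{D}(A)$ is a finitely generated module over $\mathcal{D}(A)^G$ (Reynolds averaging), whence $GK(\mathcal{D}(A)^G)=GK(\mathcal{D}(A))=2n$; the symplectic Weyl-algebra, the quotient-variety $\mathcal{D}(\mathbb{A}^n/G)$, and the pure-type generalized Weyl cases are identical in spirit. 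Simplicity of each $B$ is classical: $\mathcal{D}(A)$ is simple because $A$ is regular, the Weyl algebra and the pure-type generalized Weyl algebras are simple domains, and simplicity descends to the finite-group invariants in each case.

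Granting $GK(B)=2n$, $fdim(B)=1$, and $B$ simple, Bavula's inequality (Theorem \ref{BavBer}) yields
$$GK(M) \;\geq\; \frac{GK(B)}{fdim(B)+1} \;=\; \frac{2n}{2} \;=\; n$$
for every nonzero finitely generated $B$-module $M$, so $h_B\geq n$. To see the bound is attained I would exhibit an explicit module: the coordinate ring $A$ is a simple $\mathcal{D}(A)$-module cyclic on $1$, and since multiplication by any $a\in A^G$ is itself an invariant operator, the invariant ring $A^G=B\cdot 1$ is a cyclic $B$-module of Gelfand--Kirillov dimension $n$; the remaining families admit the analogous natural module. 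Hence $h_B=n$ is genuinely a minimum. Because $B$ is simple, $Ann(M)=0$ for every nonzero $M$, so $GK(B/Ann(M))=2n$ and the defining condition for holonomicity collapses to $GK(M)=n=h_B$; thus the holonomic modules are exactly the modules of minimal dimension.

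The torsion and finite-length statements are then formal, and cyclicity is the genuine obstacle. Since $B$ is a Noetherian, hence Ore, domain, a finitely generated module has nonzero rank precisely when it contains a copy of $B$, which forces $GK=GK(B)=2n$; as $GK(M)=n<2n$, every holonomic $M$ has rank zero and is torsion. For finite length I would invoke additivity of multiplicity: any nonzero submodule or subquotient of a holonomic module has $GK$ dimension trapped between $n$ and $n$, hence is again holonomic, and the normalized leading coefficient $e(M)$ of its Hilbert function in degree $n$ is additive over short exact sequences and bounded below by a fixed positive constant, so every filtration with nonzero factors has length at most a multiple of $e(M)$. Finally, for cyclicity I expect to appeal to a Stafford-type generation theorem for simple Noetherian rings: a holonomic module is a finite-length torsion module whose dimension $n$ is exactly half of $GK(B)=2n$, and this smallness should force a single generator, in direct analogy with Stafford's classical result that holonomic modules over the Weyl algebra are cyclic. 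Verifying that the precise hypotheses of such a theorem --- control of the Krull dimension and of the torsion and uniform-dimension invariants --- hold for each $B$ on the list is where the real effort of the proof will be concentrated.
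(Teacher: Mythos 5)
Your route is the same as the paper's: use $fdim(B)=1$ from Theorems \ref{thm-main} and \ref{thm-main2} together with Bavula's inequality (Theorem \ref{BavBer}) to get $GK(M)\geq n$, exhibit $A^G$ (resp.\ $\k[x_1,\ldots,x_n]^G$, resp.\ the GWA analogue) as a module of Gelfand--Kirillov dimension $n$ to pin down $h_B=n$, deduce torsion from a GK comparison, finite length from additivity of multiplicity, and cyclicity from a generation theorem for simple rings. But two load-bearing steps are not actually carried out. The first is the multiplicity structure itself: your finite-length argument appeals to ``the normalized leading coefficient $e(M)$ of its Hilbert function'' being additive, yet a Hilbert polynomial for finitely generated $B$-modules exists only after one equips $B$ with a finite-dimensional filtration whose associated graded algebra is affine commutative (or semi-commutative). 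For the \emph{invariant} algebras this is precisely where the paper works: one must take a $G$-stable filtration and identify $gr(B)$ with the invariants of the associated graded algebra, which is affine by Noether's theorem --- Proposition \ref{nsprop1} for $\mathcal{D}(A)^G$, Corollary \ref{Weyl}(3) for $A_n(\k)^G$, and the filtration $\mathcal{B}_A$ behind Theorem \ref{genWeyl} for pure-type GWA invariants. Without this, $e(M)$ is undefined and the finite-length (hence also the cyclicity) argument cannot start; listing only ``$GK(B)=2n$ and simplicity'' as the needed structural inputs omits exactly this ingredient.

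The second gap is cyclicity, which you explicitly defer to an unspecified ``Stafford-type'' theorem and predict will require ``control of the Krull dimension and of the torsion and uniform-dimension invariants.'' That is the wrong target, and as written the claim remains unproved. The statement the paper actually uses (via Theorem \ref{abelian1}, citing \cite[Theorem 2.5]{Coutinho}) is: over a simple Noetherian ring that is \emph{not Artinian}, every module of finite length is cyclic. Its hypotheses are already available once the first gap is filled: $B$ is simple and non-Artinian by Theorem \ref{ring} (with outerness of the action supplied, e.g., by Corollary \ref{nscorol1} for $\mathcal{D}(A)^G$), and holonomic modules have finite length by the multiplicity argument. So no Krull- or uniform-dimension analysis is needed at all --- the ``real effort'' you anticipate dissolves into a citation --- but your proposal, taken as written, establishes neither the finite-length bound nor cyclicity. (Your Ore-rank argument for torsion, by contrast, is correct and is equivalent to the paper's kernel argument in Corollary \ref{prop-torsion}.)
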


We apply developed technique to compute the   filtered dimension and the     Krull dimension of rational Cherednik 
algebras. In particular, we show 

\begin{theorem}\label{thm-main4}  
The filtered dimension of generic rational Cherednik algebras and their spherical subalgebras equals $1$. 
\end{theorem}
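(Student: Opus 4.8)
The plan is to treat the spherical subalgebra and the full Cherednik algebra together, reducing both to the invariant differential operators already handled in Theorem~\ref{thm-main}. Fix a complex reflection group $W$ acting on a vector space $\mathfrak h$ with $n=\dim\mathfrak h$, take $t=1$ and a parameter $c$, and write $H_c = H_{1,c}(W,\mathfrak h)$ for the rational Cherednik algebra, $e = |W|^{-1}\sum_{w\in W} w$ for the symmetrizing idempotent, and $U_c = e H_c e$ for the spherical subalgebra. I would equip $H_c$ with its PBW (order) filtration, for which $\mathrm{gr}\, H_c \cong \k[\mathfrak h\oplus\mathfrak h^*]\rtimes W$ and $\mathrm{gr}\, U_c \cong \k[\mathfrak h\oplus\mathfrak h^*]^W$; in particular both algebras are affine and Noetherian with $GK(H_c)=GK(U_c)=2n$, and, crucially, $\mathrm{gr}\,U_c$ is exactly the symbol algebra of the invariant differential operators $\mathcal D(\mathfrak h)^W = A_n(\k)^W$. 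By ``generic'' I mean that $c$ is chosen so that $H_c$ is simple; then $U_c$ is simple as well and the bimodule $H_c e$ gives a Morita equivalence $H_c \sim U_c$ (Etingof--Ginzburg, Berest--Etingof--Ginzburg). Since the filter dimension is stable under the Morita and finite-extension transfers already used to prove Theorems~\ref{thm-main} and~\ref{thm-main2}, it suffices to compute $fdim\,U_c$.

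For the lower bound I would exhibit one small module: the standard (polynomial) module $\k[\mathfrak h]$ in category $\mathcal O$ has Gelfand--Kirillov dimension $n$ as an $H_c$-module, and for generic $c$ its spherical part is a nonzero $U_c$-module $M$ with $GK(M)=n=\tfrac12 GK(U_c)$. Plugging $M$ into Bavula's inequality (Theorem~\ref{BavBer}) gives $n = GK(M)\ge GK(U_c)/(fdim\,U_c+1) = 2n/(fdim\,U_c+1)$, hence $fdim\,U_c\ge 1$.

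The heart of the matter is the reverse inequality $fdim\,U_c\le 1$. The conceptual reason is that, away from the reflection hyperplanes, the Cherednik algebra is undeformed: inverting a suitable power of the discriminant $\delta\in\k[\mathfrak h]^W$ yields an isomorphism $U_c[\delta^{-1}]\cong \mathcal D(\mathfrak h^{\mathrm{reg}})^W$, whose target is $c$-independent and, because $W$ acts freely on $\mathfrak h^{\mathrm{reg}}$ (Steinberg), equals $\mathcal D(\mathfrak h^{\mathrm{reg}}/W)$, the differential operators on a smooth affine variety. Taking $A=\k[\mathfrak h][\delta^{-1}]$, an affine regular domain of Krull dimension $n$, and $G=W$, the first bullet of Theorem~\ref{thm-main} gives $fdim\,\mathcal D(\mathfrak h^{\mathrm{reg}})^W=1$. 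To descend from the localization back to $U_c$ I would argue directly on the return function: its growth rate is governed by the associated graded, and since $\mathrm{gr}\,U_c=\k[\mathfrak h\oplus\mathfrak h^*]^W$ coincides with $\mathrm{gr}\,\mathcal D(\mathfrak h)^W$, the filtered estimates that yield $fdim\,\mathcal D(\mathfrak h)^W=1$ in Theorem~\ref{thm-main} apply verbatim, the Auslander--Gorenstein and Cohen--Macaulay properties of $U_c$ ensuring that no pathology is introduced along $\delta=0$. Combining the two bounds gives $fdim\,U_c=1$, and by the Morita reduction $fdim\,H_c=1$.

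The main obstacle I foresee is precisely this descent: one must check that the return function is uniform in the deformation parameter $c$ and is not worsened at the divisor $\delta=0$, i.e.\ that replacing the commutative symbol algebra $\k[\mathfrak h\oplus\mathfrak h^*]^W$ by either of its quantizations $U_c$ or $\mathcal D(\mathfrak h)^W$ leaves the polynomial growth of the return function unchanged. The identification of the two associated graded rings and the simplicity of $H_c$ at generic $c$ are exactly what make this transfer legitimate; the genericity hypothesis is essential, since for special $c$ the algebra $H_c$ may fail to be simple and $fdim$ is then not defined.
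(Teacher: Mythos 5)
Your reduction to $U_c$ and your lower bound are essentially the paper's own argument: the paper also uses the filtration with $\mathrm{gr}\,U_c\simeq \C[\mathfrak h\oplus \mathfrak h^*]^W$ (Corollary \ref{cherednik1}, via \cite[1.6]{Bellamy}), the polynomial representation as a module of Gelfand--Kirillov dimension $\tfrac12 GK$, and Morita invariance (Theorem \ref{morita}) to pass between $H_c$ and $U_c$; this is Lemma \ref{cherednik11}. The genuine gap is your upper bound, specifically the ``descent'' from $U_c[\delta^{-1}]\cong\mathcal D(\mathfrak h^{\mathrm{reg}}/W)$ back to $U_c$. It rests on two principles that are not available. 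First, the growth of the return function is \emph{not} governed by the associated graded algebra: the return function measures the degree needed to witness $1\in A_j a A_j$, i.e.\ it quantifies the simplicity of $A$ itself, and this is invisible in $\mathrm{gr}\,A$ (here $\mathrm{gr}\,U_c$ is commutative, hence as far from simple as possible; two filtered deformations of the same graded algebra can have completely different return functions). Second, the paper's proof that $fdim\,\mathcal D(\mathfrak h)^W=1$ (first item of Theorem \ref{thm-main}, i.e.\ Theorem \ref{nsthm1}) is not a ``filtered estimate'' that could apply verbatim: its upper bound comes from the symmetrization trick of Proposition \ref{heart}, $b\mapsto ebe$ inside $A\ast G$, which requires presenting the algebra as $A^G$ for a simple algebra $A$ of known filter dimension and an \emph{outer action of a finite group}. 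The spherical subalgebra admits no such presentation: it is a corner algebra $eH_ce$, and the Dunkl embedding places it only inside a \emph{localization} $\mathcal D(\mathfrak h^{\mathrm{reg}})\rtimes W$, not inside a finite extension, so the averaging argument breaks. Localization itself does not transfer filter-dimension bounds either: the elements of $U_c[\delta^{-1}]$ returning a given $a$ to $1$ may have poles along $\delta=0$, and clearing denominators is not polynomially controlled. (Tellingly, the paper does use the Dunkl localization, but only for the Krull dimension, Theorem \ref{cherednik4}, where a genuine localization inequality \cite[Lemma 6.5.3]{McConnell} exists; no analogue is known for $fdim$.) The appeal to Auslander--Gorenstein and Cohen--Macaulay properties has no established bearing on return functions.

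The paper closes the upper bound by a different mechanism, whose key external input you never invoke: Thompson's Bernstein inequality for Cherednik algebras. Since $U_c$ is an algebra with multiplicity (Corollary \ref{cherednik1}), its holonomic number $h$ is actually attained by some finitely generated module; by \cite[Proposition 3.7]{Thompson} every such module has $GK\geq \mathsf n$, and the spherical polynomial module gives $h=\mathsf n=\tfrac12 GK(U_c)$ exactly; the paper then deduces $fdim\,U_c\leq 1$ from this equality via Theorem \ref{BavBer}, and Morita invariance transfers the result to $H_c$ (Theorem \ref{cherednik2}). In your proposal nothing controls how small the Gelfand--Kirillov dimension of a $U_c$-module can be --- your localization argument cannot detect this, because modules supported on $\delta=0$ are killed by the localization --- so the upper bound remains open as written.
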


%Finally, our approach include a basic analysis of consequences on representation theory using the notion of multiplicity, following ideas in, e.g, \cite{Borel}, \cite{Krause}, \cite{Coutinho}.

%In \cite{FS} we showed that, under very mild conditions (cf. Theorem \ref{thmGWA} bellow), all generalized Weyl algebras are Galois orders, and hence have a representation theory that can be studied via Gelfand-Tsetlin modules. In the same paper, invariant subrings of the Weyl algebra and other rings of differential operators were discussed. A further development, which can be seen as a quantization of the results in \cite{FS}, is \cite{FS2}, which consider invariants of many quantum groups.

%A result that was missing in \cite{FS} was the following: we were able to show that the invariants of the Weyl algebra $A_n(\k)$ under the action of a finite group of the Shephard-Todd list $G(m,1,n)$ are a Galois order for a suitable $\Gamma$, but not for al $G(m,p,n)$. In \cite{LW}, this result was obtained in the more general setting of all spherical subalgebras of rational Cherednik algebras. One of the purpose of this paper is to obtain a generalization of our results in \cite{FS} to a big class of generalized Weyl algebras, taking invariants under the action of the groups of the aforementioned type, and relate the class of Gelfand-Tsetlin modules with holonomic modules. We will also discuss apllications to the Gelfand-Kirillov Conjecture (\cite{Gelfand}). 

The structure of the paper is as follows. 
%In the Section 2, we recall the framework of generalized Weyl algebras and Galois algebras, and the relation between them. It finishes with a very important result on the Gelfand-Kirillov dimension of Gelfand-Tsetlin modules (Thm. \ref{GK-GT}). 
In Section \ref{sec-filter}, we recall the notion of a filter dimension and its main properties. Section \ref{sec-holonomic} is the technical core of the paper: we prove a number of results on holonomic modules for simple somewhat commutative algebras, and simple filtered semi-commutative algebras which are Noetherian but not Artinian, with particular emphasis on certain generalized Weyl algebras. All of these algebras are examples of so-called \emph{algebras with multiplicity}.
Some of these results are probably known to specialists but we included them for the clarity of exposition. In Section \ref{invariants} we discuss the filter dimension of rings of invariants under the action of a finite group. 
%and prove the essential Proposition \ref{heart}. 
In Section \ref{sec-hol-inv} we consider the invariants of Weyl algebras, rings of invariant differential operators on quotient varieties, and invariants of generalized Weyl algebras.
We show that these algebras have a nice category of holonomic modules (cf. Theorems \ref{Bernsteinnew},  \ref{quotient1}, \ref{genWeyl}, \ref{goodcategory2}). Finally, in Section \ref{section-cherednik} we compute the filtered dimension and the Krull dimension of the rational Cherednik algebras and its spherical subalgebras.

\section{The filter dimension}\label{sec-filter}
Let $A$ be an algebra over $\k$. Every finite-dimensional subspace $V\subset A$  containing the identity will be called a \emph{frame} of $A$. Let us recall the notion of the Gelfand-Kirillov (GK) dimension of algebras and modules (cf. \cite{Krause}).

\begin{definition}\[ GK (A) = sup_V \, limsup_{n \mapsto \infty} \frac{log \, dim \, V^n}{log \, n}, \] where $V$ ranges through all frames of $A$.
Let $M$ be an $A$-module. Then

\[ GK (M) = sup_{V,F} \, limsup_{n \mapsto \infty} \frac{log \, dim \, V^nF}{log \, n}, \]

where $F$ runs through all finite dimensional subspaces of $M$ and $V$ runs through all frames of $A$.
\end{definition}

 Recall (\cite[8.1.9]{McConnell}) that a \emph{finite-dimensional filtration} of an algebra $A$ is a filtration $\mathcal{F}=\{A_i\}_{i \geq 0}$ such that $A_0 = \k$ and $dim \, A_i < \infty, i>0$. If $N$ is an $A$-module with filtration $\{ N_i \}_{i \geq 0}$, then the filtration is finite-dimensional if $dim \, N_i < \infty$, $i \geq 0$.
Suppose that $A$ is an infinite-dimensional affine algebra generated by $a_1, \ldots a_n$.
Define a finite-dimensional filtration $\mathcal{F}=\{A_i\}_{i \geq 0}$ in the following way: $A_0= \k, \, A_1 = span \langle 1, a_1, \ldots, a_n \rangle, A_i=A_1^i$. Now let $M= A M_0$ be a finitely generated $A$-module with finite-dimensional generating space $M_0$. 
Then $\mathcal{F}$ induces a natural filtration $\Omega = \{M_i \}_{i \geq 0}$ on $M$, where $M_i=A_i M_0$. 

\emph{The return function} $f_{\mathcal{F},M_0}: \mathbb{N} \rightarrow \mathbb{N} \cup \{ \infty \}$ is defined as follows:
 $f_{\mathcal{F},M_0}(i) =$

\[ min \{ j \in \mathbb{N} \cup \{ \infty \} : A_j M_{i,g} \supset M_0 \, \forall M_{i,g} \}, \]
where $M_{i,g}\subset M_i$ runs over the finite-dimensional  subspaces  which generate $M$ as a module.

For each function $f: \mathbb{N} \rightarrow \mathbb{N} \cup \{ \infty \}$ define $\Gamma(f)$ as

\[ inf\{r \in \mathbb{R}|f(i) \leq i^r, i>>0 \}. \]

Note that, if $f(i)=p(i)$ for all sufficiently large $i$ for some polynomial $p \in \mathbb{Q}[x]$,   then $\Gamma(f)= deg \, p$ (\cite{Krause}, Chp. 2).

For a finitely generated $A$-module $M$, the \emph{filter dimension} is defined as (\cite{B2})
$$fdim(M)=\Gamma(f_{\mathcal{F},M_0}).$$  It
 is independent of the choice of the generating set $a_1,\ldots, a_n$ and the  subspace $M_0$ (\cite[Lemma 1.1]{B2}). The filter dimension of $A$, $fdim(A)$, is its filter dimension as an $A \otimes_\k A^{op}$-module.

\begin{example}[\cite{Bnew1}]\label{example}
Let $B$ be an affine regular commutative algebra with finite Krull dimension. Then the filter dimension of the ring of differential operators $\mathcal{D}(B)$ on $B$ equals  $1$.
\end{example}

 The following analog of the Bernstein inequality was shown in \cite{B2}:

\begin{theorem}\label{BavBer}
Let $A$ be an  infinite-dimensional affine simple algebra and $M$ a non-zero finitely generated module. Then

\[ GK (M) \geq \frac{GK (A)}{1+max(1+fdim(A))}.\]
\end{theorem}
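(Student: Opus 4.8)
The plan is to deduce the inequality from a single ``return estimate'' that bounds the growth of $A$ by the growth of $M$, the link between the two being furnished by the simplicity of $A$ through the return function of the bimodule $A$. Fix the standard finite-dimensional filtration $\mathcal{F}=\{A_i\}$ of $A$ coming from a finite generating set, let $M=AM_0$ be the given nonzero finitely generated module with induced filtration $M_i=A_iM_0$, and let $f=f_{\mathcal F}$ denote the return function of $A$ viewed as an $A\otimes_\k A^{op}$-module (generated by $1$, with $(A\otimes A^{op})_j\cdot 1=A_j$). Writing $d=fdim(A)=\Gamma(f)$ and $g=GK(M)$, the whole theorem reduces to the elementary estimate
\[
\dim_\k A_i \;\leq\; \dim_\k M_{f(i)}\cdot \dim_\k M_{i+f(i)}\qquad (i\gg 0),
\]
after which one only has to feed in the asymptotics $f(i)\leq i^{\,d+\eps}$ and $\dim_\k M_j\leq j^{\,g+\eps}$. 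Note that no reduction to cyclic modules is needed: the estimate is established for an arbitrary nonzero finitely generated $M$.

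The heart of the argument --- and the step I expect to be the main obstacle --- is proving this estimate, i.e. that the multiplication map
\[
\phi\colon A_i \too \mathrm{Hom}_\k\!\big(M_{f(i)},\,M_{i+f(i)}\big),\qquad \phi(a)(w)=aw,
\]
is \emph{injective}; it is well defined since $A_iM_{f(i)}=A_iA_{f(i)}M_0\subseteq M_{i+f(i)}$. This is where simplicity enters. Suppose $0\neq a\in A_i$ with $aM_{f(i)}=0$. Since $A$ is simple, $AaA=A$, so $1=\sum_k u_k a v_k$ for some $u_k,v_k\in A$; and since the single nonzero element $a\in A_i$ already generates $A$ as a bimodule, the definition of the return function (a minimum taken over \emph{all} generating subspaces of $A_i$, hence applicable to $\k a$) lets us take $u_k\otimes v_k\in (A\otimes A^{op})_{f(i)}$, so in particular $v_k\in A_{f(i)}$. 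Then for every $m\in M_0$ we have $v_km\in A_{f(i)}M_0=M_{f(i)}$, hence $av_km=0$, whence $m=\big(\sum_k u_k a v_k\big)m=\sum_k u_k(av_km)=0$. Thus $M_0=0$, contradicting $M\neq 0$; therefore $\phi$ is injective and $\dim_\k A_i\leq\dim_\k\mathrm{Hom}_\k(M_{f(i)},M_{i+f(i)})=\dim_\k M_{f(i)}\cdot\dim_\k M_{i+f(i)}$.

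It then remains to extract the numerical inequality, which is routine bookkeeping with $\Gamma$. Taking logarithms of the key estimate and dividing by $\log i$, and using $\log f(i)\leq (d+\eps)\log i$ together with $\log\big(i+f(i)\big)\leq \max\{1,d+\eps\}\log i+O(1)$, gives
\[
GK(A)=\limsup_{i\to\infty}\frac{\log\dim_\k A_i}{\log i}\;\leq\;(g+\eps)\big((d+\eps)+\max\{1,d+\eps\}\big).
\]
Letting $\eps\to 0$ yields $GK(A)\leq GK(M)\big(fdim(A)+\max\{1,fdim(A)\}\big)$, which rearranges to the asserted lower bound for $GK(M)$ (the split into the cases $fdim(A)\geq 1$ and $fdim(A)<1$ only affects which term realizes the maximum). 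The remaining points needing care are that the induced filtrations genuinely compute $GK(A)$ and $GK(M)$ --- standard for affine algebras and finitely generated modules --- and the uniformity built into the return function, namely that one value $f(i)$ serves \emph{every} nonzero $a\in A_i$ at once, which is precisely what guarantees injectivity of $\phi$ above.
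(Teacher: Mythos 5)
Your argument is correct, and it is in substance the proof from the cited source: the paper itself gives no proof of Theorem~\ref{BavBer} (the result is quoted from \cite{B2}), and Bavula's original argument is exactly yours --- injectivity of the multiplication map $A_i\to\mathrm{Hom}_\k(M_{f(i)},M_{i+f(i)})$, with simplicity of $A$ converted, through the uniformity built into the definition of the return function, into identities $1=\sum_k u_k a v_k$ with all $v_k\in A_{f(i)}$, valid simultaneously for every nonzero $a\in A_i$.

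One discrepancy you should not paper over with the word ``rearranges'': what your computation actually establishes is $GK(M)\geq GK(A)\big/\bigl(fdim(A)+\max\{fdim(A),1\}\bigr)$, which is the statement as it appears in \cite{B2} and \cite{B3}. The denominator printed in the paper, $1+\max(1+fdim(A))$, is a garbled transcription (it is a maximum of a single argument), and no natural disambiguation of it agrees with the proven denominator for all values of $fdim(A)$; the two do agree in the only case the paper ever uses, namely $fdim(A)=1$, where both equal $2$ and the inequality becomes $GK(M)\geq\tfrac{1}{2}GK(A)$. So your proof should be read as establishing the corrected statement rather than the printed formula. Two points of hygiene that do not affect correctness: the key estimate presupposes $f(i)<\infty$, which holds for $i\gg 0$ whenever $fdim(A)<\infty$ (when $fdim(A)=\infty$ the asserted inequality is vacuous), and the bound $\dim_\k M_{f(i)}\leq f(i)^{g+\eps}$ needs the usual constant adjustment in case $f(i)$ stays bounded, which only strengthens the estimate.
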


In view of Example \ref{example}, this reproves the usual Bernstein inequality for rings of differential operators on smooth affine varieties (cf. \cite{Borel}).

In general, a  computation of the filter dimension of an algebra is a difficult problem  (\cite{B3}, \cite{Krause} 12.9). However, it has an important property to be Morita invariant. Namely:

\begin{theorem}[\cite{B4},Theorem 1.3, Theorem 1.6]\label{morita}
Morita equivalent infinite-dimensional affine simple algebras have the same filter dimension and the same holonomic number.
\end{theorem}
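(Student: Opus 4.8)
The plan is to reduce the statement to the structural description of Morita equivalence and then invoke the two basic invariance properties of growth. By Morita's theorem, if $A$ and $B$ are Morita equivalent then $B \cong e\,M_n(A)\,e$ for some $n$ and some full idempotent $e \in M_n(A)$, i.e. $M_n(A)\,e\,M_n(A) = M_n(A)$. Since the lattice of two-sided ideals is a Morita invariant, $B$ is again simple; and since $M_n(A)$ and any full corner of it are affine whenever $A$ is, $B$ is again infinite-dimensional and affine, so both quantities in the statement are defined for $B$. It therefore suffices to prove invariance separately under the two elementary moves $A \rightsquigarrow M_n(A)$ and $A \rightsquigarrow eAe$ (with $AeA = A$), since a general equivalence is a composite of these.

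For the holonomic number, recall that the GK dimension of a module is itself a Morita invariant: an equivalence $F\colon A\text{-Mod}\to B\text{-Mod}$ carries finitely generated modules to finitely generated modules and preserves GK dimension (this is classical for $M_n(A)$ and for full corners, since $P \otimes_A (-)$ changes a frame-based filtration only by a bounded factor). Consequently the sets $\{GK(M)\}$, ranging over nonzero finitely generated $A$- and $B$-modules, coincide, and taking infima gives $h_A = h_B$. One also needs $GK(A)=GK(B)$, which follows from the same module-level invariance applied to the algebras viewed over themselves.

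For the filter dimension the essential observation is that $\Gamma$ is insensitive to linear rescaling in both the argument and the value: $\Gamma(f(c\,\cdot)) = \Gamma(f) = \Gamma(c\,f)$ for any constant $c$. Hence it is enough to show that the return functions of $A$ and $B$ are related by inequalities $f_B(i) \le c_1 f_A(c_2 i)$ and $f_A(i) \le c_3 f_B(c_4 i)$ with constants $c_j$. In the matrix case this is transparent: the matrix units lie in a fixed frame of $M_n(A)$, so a frame of $A$ and a frame of $M_n(A)$ generate filtrations that differ only by bounded filtration degree, and the defining condition $A_j M_{i,g} \supset M_0$ transfers with a bounded shift in $j$. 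In the corner case one uses fullness to write $1_A = \sum_{k} a_k\, e\, b_k$ with finitely many $a_k,b_k$ lying in a single frame of $A$; this identity lets one convert a generating subspace over $eAe$ into a generating subspace over $A$, and a return estimate for $A$ back into one for $eAe$, at the cost of only a bounded number of extra multiplications, i.e. a linear shift in the filtration index.

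I expect the corner case to be the main obstacle. The subtlety is that the return function quantifies over \emph{all} finite-dimensional subspaces $M_{i,g}$ of $M_i$ that generate $M$, and one must verify that passing a module and all of its generating subspaces through the functor $e\,(-)$ (respectively $Ae\otimes_{eAe}(-)$) sends generating subspaces to generating subspaces while increasing filtration degrees only linearly; controlling this uniformly over all such $M_{i,g}$, rather than for a single fixed generator, is where the fullness relation and the boundedness of the chosen frames must be combined with care.
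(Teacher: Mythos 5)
First, a point of order: the paper does not prove this theorem at all---it is imported verbatim from \cite{B4} (Theorems 1.3 and 1.6), so there is no in-paper argument to compare yours against. Measured against the cited source, your overall plan is sound and is essentially the standard route: decompose the equivalence as $B \cong e\,M_n(A)\,e$ with $e$ a full idempotent, treat the matrix extension and the corner separately, get equality of holonomic numbers from Morita invariance of the Gelfand--Kirillov dimension of finitely generated modules (this is in \cite{Krause}), and get equality of filter dimensions from the insensitivity of $\Gamma$ to linear rescalings of the return function. Your side claims are also correct: a full corner of an affine algebra is affine (insert $1=\sum_k a_k e b_k$ between consecutive letters of a word to rewrite $ewe$ as a sum of products of the finitely many elements $e u x_i v e$), and in a simple ring every nonzero idempotent is automatically full.

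The genuine gap is exactly the one you flag yourself: the corner case of the filter-dimension estimate is described but never carried out, and in your formulation---via the return function $f_{\mathcal{F},M_0}$ quantified over \emph{all} generating subspaces $M_{i,g}$---the uniformity problem you worry about is real. It dissolves once you use the characterization of the filter dimension of a \emph{simple} algebra recorded in Section \ref{invariants} of this paper (the Proposition quoting \cite[Lemma 1.1]{B5}): $fdim\,A=\Gamma(\nu_{\mathcal{F}})$, where $\nu_{\mathcal{F}}(i)=\inf\{j : 1\in A_jaA_j \text{ for all } 0\neq a\in A_i\}$. Since $A$ is simple, a subspace generates $A$ as a bimodule if and only if it is nonzero, so the quantification over generating subspaces collapses to a quantification over nonzero elements and no uniformity issue remains. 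With $\nu$ both corner estimates are short. If $0\neq b=ebe\in B_i\subseteq A_{ci}$ and $1=\sum_m u_m b v_m$ with $u_m,v_m\in A_j$, $j=\nu_A(ci)$, then $e=\sum_m (eu_me)\,b\,(ev_me)\in (eA_je)\,b\,(eA_je)\subseteq B_{c'j}\,b\,B_{c'j}$, where $eA_je\subseteq B_{c'j}$ follows from the same insertion trick (after enlarging the generating set of $B$, harmless because $fdim$ is independent of that choice); hence $\nu_B(i)\leq c'\nu_A(ci)$. Conversely, for $0\neq a\in A_i$ the identity $a=\sum_{k,l}a_k(e b_k a a_l e)b_l$ forces some $b:=e b_k a a_l e\neq 0$; then $b\in B_{c(i+d)}$, and $e\in B_{j'}\,b\,B_{j'}$ with $j'=\nu_B(c(i+d))$ pulls back, via $B_{j'}\subseteq A_{dj'}$ and $1\in A_deA_d$, to $1\in A_{dj'+d''}\,a\,A_{dj'+d''}$; hence $\nu_A(i)\leq d\,\nu_B(c(i+d))+d''$. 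These two inequalities, combined with your rescaling observation and their (easier) matrix analogues, are precisely what your sketch still owes.
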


\section{Holonomic modules for algebras with multiplicity}\label{sec-holonomic}
\subsection{Algebras with multiplicity}
From now on we assume that $char \, \k = 0$.
Let $A$ be a finitely generated infinite-dimensional  simple Noetherian  algebra, with a finite-dimensional filtration $\mathcal{F}=\{A_i \}_{i \geq 0}$ such that $gr_\mathcal{F} \, A$ is finitely generated Noetherian.

Let $M$ be a finitely generated module over $A$ with a filtration $\Omega=\{M_i \}_{i \geq 0}$. 
We assume that $\Omega$ is a \emph{good filtration}, that is
 $gr_\Omega M$ is a finitely generated module over $gr_\mathcal{F} \, A$. Recall that every finitely generated module has a good filtration (\cite{Krause}, Ch. 6).

Consider a short exact sequence of modules $0 \rightarrow M' \rightarrow M \rightarrow M'' \rightarrow 0$ and the filtrations $\Omega'=\{M_i'=M_i \cap M'\}_{i \geq 0}$  and $\Omega''=\{M_i''=M_i +M/M\}_{i\geq 0}$ of $M'$ and $M''$ respectively.
This induces an exact sequence of $gr_\mathcal{F} \, A$-modules:

\[ 0 \rightarrow gr_{\Omega'} M' \rightarrow gr_\Omega M \rightarrow gr_{\Omega''} M'' \rightarrow 0. \]

Since $ gr_\Omega M$ is  finitely generated  and $gr_\mathcal{F} \, A$ is Noetherian then $\Omega'$ and $\Omega''$ are  good filtrations.

\begin{definition}\label{{theoremB11}} \cite[12.6]{Krause}
We will say that $A$ is an \emph{algebra with multiplicity} if for every finitely generated $A$-module $0 \neq M$, $GK (M)$ is a non-negative integer, and there is a function $M\mapsto e(M)$, 
where $e(M)$ is a non-negative integer, called \emph{the multiplicity} of $M$ such that:
 for a short exact sequence of finitely generated $A$-modules
 $$0 \rightarrow M' \rightarrow M \rightarrow M'' \rightarrow 0$$ we have $GK (M) = max \{ GK (M'), GK (M'') \}$, and if $GK (M) = GK (M') = GK (M'')$, then $e(M)=e(M')+e(M'')$.

\end{definition}

Since the Gelfand-Kirillov dimension of finitely generated modules are non-negative integers, there exists a non-zero module whose Gelfand-Kirillov dimension is precisely the holonomic number, which is a non-negative integer. Also note that if $M$ is any non-zero finitely generated module then $GK (M) \geq 1$ and $ e(M) \geq 1$, as  $A$ is simple infinite-dimensional.

\subsection{Somewhat commutative algebras}
A \emph{somewhat commutative algebra} $A$ is an algebra with a finite-dimensional filtration 
$\mathcal{F}=\{A_i \}_{i \geq 0}$ such that the graded associated algebra is finitely generated 
commutative  (\cite[8.6.9]{McConnell}). Affine somewhat commutative algebras are  Noetherian. Then we have from 
\cite[Corol. 8.6.20]{McConnell}:

\begin{corollary}
Any simple somewhat commutative algebra is an algebra with multiplicity.
\end{corollary}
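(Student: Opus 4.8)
The plan is to reduce the entire statement to the classical Hilbert-function theory for the finitely generated commutative Noetherian ring $R = gr_{\mathcal F} A$. Since $A$ is somewhat commutative, $R$ is by definition finitely generated commutative, and since $A$ is affine it is Noetherian. First I would fix a nonzero finitely generated $A$-module $M$ together with a good filtration $\Omega = \{M_i\}_{i\geq 0}$, so that $gr_\Omega M$ is a finitely generated graded $R$-module. By the dimension theory for such modules the Hilbert function $i \mapsto \dim M_i$ coincides, for $i \gg 0$, with a polynomial $p_M$ of non-negative integer degree; this degree is precisely $GK(M)$. This already gives $GK(M) \in \mathbb{Z}_{\geq 0}$.

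Next I would define the multiplicity $e(M)$ to be $(\deg p_M)!$ times the leading coefficient of $p_M$, that is, the Hilbert--Samuel multiplicity of $gr_\Omega M$ as an $R$-module. Standard commutative algebra shows that this is a positive integer. Thus both $GK(M)$ and $e(M)$ are well-defined numerical invariants of $M$, assuming independence from the chosen good filtration, which I address below.

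For the exactness axioms I would invoke the short exact sequence of graded $R$-modules
\[ 0 \to gr_{\Omega'} M' \to gr_\Omega M \to gr_{\Omega''} M'' \to 0 \]
obtained above from a short exact sequence $0 \to M' \to M \to M'' \to 0$ equipped with the induced subspace and quotient filtrations. Additivity of Hilbert functions along exact sequences yields $p_M = p_{M'} + p_{M''}$, whence $\deg p_M = \max\{\deg p_{M'}, \deg p_{M''}\}$, i.e. $GK(M) = \max\{GK(M'), GK(M'')\}$; and when the three degrees agree, the leading coefficients add, which is exactly $e(M) = e(M') + e(M'')$. This verifies every requirement of the preceding definition and so exhibits $A$ as an algebra with multiplicity. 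The simplicity and infinite-dimensionality of $A$ are not needed for the multiplicity structure itself; they enter only to secure the supplementary bounds $GK(M) \geq 1$ and $e(M) \geq 1$ recorded after the definition.

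The one delicate point, and the place where the cited \cite[Corol. 8.6.20]{McConnell} does the substantive work, is the independence of $\deg p_M$ and of the top-degree coefficient from the choice of good filtration: a priori the graded module $gr_\Omega M$ depends on $\Omega$, and one must check that passing between two good filtrations alters $gr_\Omega M$ only by submodules and quotients of strictly smaller dimension, so that neither $GK(M)$ nor the multiplicity is affected. I expect this comparison of good filtrations to be the main technical obstacle, with the additivity statements then following formally from the additivity of Hilbert polynomials.
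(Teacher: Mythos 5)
Your proposal is correct and takes essentially the same route as the paper: the paper offers no argument of its own but simply cites \cite[Corol.\ 8.6.20]{McConnell}, whose underlying proof is exactly the Hilbert-polynomial argument you reconstruct (eventual polynomiality of $i \mapsto \dim M_i$ for a good filtration, multiplicity as the normalized leading coefficient, and additivity along the induced exact sequence of graded modules). The point you flag as delicate --- independence from the choice of good filtration --- is precisely what the cited source's supporting lemmas supply, via the standard fact that any two good filtrations on $M$ lie within a bounded shift of one another, which forces equal degrees and equal leading coefficients.
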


%We remark that this notion (introduced in \cite{McConnell}, 8.6) is slightly more general than the a similar notion, that of almost commutative algebras (discussed in \cite{Krause}, Chapter 7). The difference is that almost commutative algebras are somewhat commutative algebras with the aditional assumption that $A_i=A_1^i$. An example of a somewhat commutative algebra that is not almost commutative can be found in \cite{McConnell}, 14.3.9.

\begin{example}\label{example23}
The Weyl algebra $A_n(\k)$ with the Bernstein filtration  is a simple somewhat commutative algebra.
\end{example}

\begin{example}\label{example21}
The ring of differential operators $\mathcal{D}(X)$ on an affine smooth algebraic variety $X$ is a simple somewhat commutative algebra; or, more generally, $\mathcal{D}(B)$ the ring of differential operators on an affine regular commutative domain (cf. [15.1.21, 15.3.7, 15.5.6]\cite{McConnell}, \cite[Section 5]{B3}).
\end{example}

\subsection{Filtered semi-commutative algebras}

A \emph{filtered semi-commutative algebra} $A$ is a finitely generated algebra with a finite-dimensional filtration $\mathcal{F}=\{A_i \}_{i \geq 0}$ such that the graded associated algebra is semi-commutative (\cite{McConnell3}) i.e., generated by elements $x_1, \ldots, x_n$ with relations $x_i x_j = \lambda_{ij} x_j x_i$, for some $0 \neq \lambda_{ij} \in \k$.

\begin{example}
Examples of such algebras include $\mathcal{O}_q(\mathbb{M}_n(\k))$, $\mathcal{O}_q(GL_n(\k))$, \\$\mathcal{O}_q(SL_n(\k))$ and $U_q(sl_n)$, where $0 \neq q$ is not a root of unity (\cite{McConnell3}).
\end{example}

We have 

\begin{proposition}\cite[Theorem 3.8]{McConnell3}
Any simple filtered semi-commutative algebra is an algebra with multiplicity.
\end{proposition}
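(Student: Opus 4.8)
The plan is to run the same dimension-theoretic argument that underlies the somewhat commutative case treated above, but with the semi-commutative graded algebra $R := gr_{\mathcal F}\, A$ (a quantum affine space) in place of a commutative polynomial ring. Write $R$ as the graded algebra generated by $x_1, \dots, x_n$ subject to $x_i x_j = \lambda_{ij} x_j x_i$. First I would record the structural facts I will lean on: iterating the Ore extension construction (after a suitable ordering) shows $R$ is an affine Noetherian algebra whose ordered monomials $x_1^{a_1} \cdots x_n^{a_n}$ form a homogeneous $\k$-basis, since the relations only rescale monomials and never alter their degree. Consequently $\dim_{\k} R_i$ equals the number of degree-$i$ monomials, exactly as in the commutative case, and each generator $x_j$ is \emph{normal}: from $x_j x_i = \lambda_{ji} x_i x_j$ one gets $x_j R = R x_j$, so $(x_j)$ is a two-sided ideal and $R/(x_j)$ is again semi-commutative on $n-1$ generators.

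The technical heart, and the step I expect to be the main obstacle, is to establish a Hilbert polynomial for finitely generated graded $R$-modules. I would prove by induction on $n$ that for such a module $N = \bigoplus_i N_i$ the function $h_N(i) := \dim_{\k} N_i$ agrees, for $i \gg 0$, with a polynomial in $\mathbb{Q}[i]$ of degree $GK(N) - 1$ and positive leading coefficient. For the inductive step, normality of $x_n$ yields a graded exact sequence $0 \to K \to N(-1) \xrightarrow{\cdot x_n} N \to C \to 0$ in which $K$ and $C = N/x_n N$ are finitely generated graded modules over $R/(x_n)$, a semi-commutative algebra on $n - 1$ generators. By the inductive hypothesis $h_K$ and $h_C$ are eventually polynomial, and taking degree-$i$ components gives $h_N(i) - h_N(i-1) = h_C(i) - h_K(i)$; a function whose first difference is eventually polynomial is itself eventually polynomial, completing the induction. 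I then define the \emph{multiplicity} $e(N)$ to be the leading coefficient of $h_N$ rescaled by $(\deg h_N)!$, a positive integer, and set $GK(N) = \deg h_N + 1$.

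Finally I would transfer this to the filtered algebra $A$ and verify the two axioms. Given a finitely generated $A$-module $M$ with a good filtration $\Omega = \{M_i\}$, the module $gr_\Omega\, M$ is finitely generated and graded over $R$, and $GK(M)$ equals the degree of the eventually-polynomial function $i \mapsto \dim_{\k} M_i = \sum_{j \le i} \dim_{\k} (gr_\Omega M)_j$, namely $\deg h_{gr_\Omega M} + 1 = GK(gr_\Omega M)$; in particular $GK(M)$ is a non-negative integer. I set $e(M) = e(gr_\Omega M)$ and check independence of the good filtration by the standard commensurability argument (any two good filtrations are bounded apart, and the leading term of the Hilbert polynomial is a filtration invariant). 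For the exact sequence axiom I would invoke the exact sequence of associated gradeds already recorded above, $0 \to gr_{\Omega'} M' \to gr_\Omega M \to gr_{\Omega''} M'' \to 0$: additivity of Hilbert functions gives $h_{gr_\Omega M} = h_{gr_{\Omega'} M'} + h_{gr_{\Omega''} M''}$, and since all leading coefficients are positive there is no cancellation, so $\deg h_{gr_\Omega M} = \max\{\deg h_{gr_{\Omega'} M'}, \deg h_{gr_{\Omega''} M''}\}$, i.e. $GK(M) = \max\{GK(M'), GK(M'')\}$; when the three degrees coincide the leading coefficients add, giving $e(M) = e(M') + e(M'')$. Simplicity and infinite-dimensionality of $A$ are not needed for the multiplicity structure itself, but they guarantee that $A$ has no non-zero finite-dimensional modules, so that $GK(M) \ge 1$ and $e(M) \ge 1$ for every non-zero $M$, as used in the remark following the definition.
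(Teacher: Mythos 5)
The paper offers no proof of this proposition at all --- it is quoted directly from \cite[Theorem 3.8]{McConnell3} --- so your argument should be judged as a reconstruction of the cited result, and as such it is correct and follows the standard route: eventual polynomiality of Hilbert functions over the semi-commutative associated graded ring, proved by induction on the number of generators using their normality, then transferred to $A$ through good filtrations, with commensurability of good filtrations giving well-definedness of $e(M)$ and positivity of leading coefficients giving the max/additivity axioms. Two details deserve tightening, though neither is fatal. First, your claim that the ordered monomials $x_1^{a_1}\cdots x_n^{a_n}$ form a homogeneous $\k$-basis of $R=gr_{\mathcal{F}}\,A$ is unjustified: the definition only says $R$ is \emph{generated} by elements satisfying $x_ix_j=\lambda_{ij}x_jx_i$, i.e.\ $R$ is a homomorphic image of the quantum affine space, and a proper quotient need not have such a basis. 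Luckily the claim is never used in your induction: each $x_j$ is still normal in any such quotient (for a monomial $m$ in the generators one has $x_jm=\lambda\, mx_j$ for a suitable nonzero scalar $\lambda$), $R/(x_j)$ is again semi-commutative on $n-1$ generators, and Noetherianity descends from the iterated Ore extension; alternatively, regard every finitely generated graded $R$-module as a graded module over the quantum affine space itself and run the induction there. Second, left multiplication by $x_n$ is not an $R$-module homomorphism, only a twisted one, $x_n(rm)=\tau(r)(x_nm)$ with $\tau$ the automorphism rescaling the generators; you should therefore say explicitly that its kernel and image are nonetheless graded $R$-submodules annihilated by the two-sided ideal $(x_n)=Rx_n=x_nR$, so that $K$ and $C=N/x_nN$ are finitely generated $R/(x_n)$-modules (Noetherianity of $R$ is needed for $K$), while the four-term sequence is exact merely as a sequence of graded vector spaces --- which is all the dimension count requires. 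With these glosses, your difference equation $h_N(i)-h_N(i-1)=h_C(i)-h_K(i)$ (with $K$ graded inside $N(-1)$) and the remainder of the argument, including the transfer to $A$ and the verification of both axioms, are sound.
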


\subsection{Generalized Weyl algebras}
We now introduce our main example of algebras with multiplicities.

Let $D$ be an algebra over $\k$. Let $a=(a_1, \ldots, a_n)$ be a n-uple of elements of $Z(D)$. Let $\sigma=(\sigma_1, \ldots, \sigma_n)$ be a tuple of automorphisms of $D$ such that $\sigma_i \sigma_j = \sigma_j \sigma_i$, $\sigma_i(a_j)= a_j$, if $j \neq i$. The \emph{generalized Weyl algebra} (GWA for short) of rank $n$ (\cite{B0}) is the algebra with generators $D, X_i, Y_i$, $i=1, \ldots, n$, and relations 

\[ X_i \lambda = \sigma_i(\lambda) X_i; Y_i \lambda = \sigma_i^{-1}(\lambda) Y_i, \lambda \in D; \]
\[ Y_iX_i = a_i, X_i Y_i = \sigma_i(a_i) \].

We will denote this algebra by $D(a, \sigma)$ and 
 call  $X_i, Y_i$'s the GWA generators,  $D$   the defining algebra. We will assume that $D$ is an affine commutative domain. In this case
 every generalized Weyl algebra is a Noetherian domain and, hence, an Ore domain.
The tensor product over $\k$ of two generalized Weyl algebras $D(a,\sigma) \otimes D(a',\sigma') \simeq (D \otimes D')(a\ast a', \sigma \ast \sigma'), $ is again a generalized Weyl algebra,  where $\ast$ is the tensor product of automorphisms, and the concatenation of $a,a'$. In particular we have

\begin{definition}[\cite{B0}]
Let $D(a, \sigma)$ be a GWA of rank $1$, with automorphism  $\sigma$ of infinite order. We define $D_n(a, \sigma)$ to be $D(a, \sigma)^{\otimes n}$. It is itself a GWA $D'(a=(a_1, \ldots, a_n), \sigma=( \sigma_1, \ldots, \sigma_n))$, where $D' = D \otimes \ldots \otimes D$ n times, $a_i = 1 \otimes\ldots \otimes a \otimes \ldots 1$, $a$ in the i-th position, and $\sigma_i=1 \otimes\ldots \otimes \sigma \otimes \ldots 1$, $\sigma$ in the i-th position.
\end{definition}

Following \cite{B5} we introduce generalized Weyl algebras of classical and of quantum types.
 Let $A=\k[h](a, \sigma)$ be a generalized Weyl algebra of rank 1 with $\sigma(h)=h-1$.
Assume that there is no  irreducible polynomial $p \in \k[x]$ such that both $p, \sigma^i(p)$ are multiples of $a$ for 
any $i \geq 0$. Then $A$ is a generalized Weyl algebra of rank 1  of \emph{simple classical type}.  Examples of such algebras are the Weyl algebras, certain subalgebras of their invariants, and certain  primitive quotients of $U(sl_2)$, among others (cf. \cite[Section 2]{B5}).

If  $A=\k[h^\pm](a, \sigma)$ is a generalized Weyl algebra of rank $1$ with $\sigma(h)=\lambda h$, $0,1 \neq \lambda \in \k$,  $a \in \k[h]$,  $\lambda$ is not a root of unity and  there are no irreducible polynomials $p \in \k[x]$ such that both $p, \sigma^i(p)$ are multiples of $a$ for any $i \geq 0$, then $A$ is of \emph{simple quantum type}. Examples of such algebras are certain primitive quotients of $U_q(sl_2)$, the quantum torus (cf. \cite[Section 2]{B5}).

Both kinds of algebras are simple.

Let $A$ be a generalized Weyl algebra of rank 1 of simple classical or quantum type, $X,Y$  the $GWA$ generators, i.e., $XY=h$. Let $m = deg \, a$ and $\alpha$ its leading coefficient. Define a finite-dimensional filtration by $\mathcal{B}_A = \{B_i \}_{i \geq 0}$, $B_n = span \langle H^i v_j \rangle, 2 |i| + m |j| \leq n$, where $v_j = X^j$ if $j >0$, $v_j =Y^j$ if $j<0$, $v_0=1$. Let $A = \otimes_{j=0}^s A_j$ be a generalized Weyl algebra of rank $s$, where $A_j$ are generalized Weyl algebras of simple classical or quantum types.  Consider the tensor product of the filtrations $\mathcal{B}_A = \otimes_{j=0}^s \mathcal{B}_{A_j}$.   Then $\mathcal{B}_A$ is a finite-dimensional filtration.

\begin{definition}\label{mixed}
A generalized Weyl algebra $A$  of rank $s$ which is the tensor product of generalized Weyl algebras of simple quantum and classical type is called a generalized Weyl algebra of mixed type. If all factors of the tensor product are of the same simple type then  the algebra $A$ is called a generalized Weyl algebra of pure type. They are simple algebras.
\end{definition}

We have
\begin{theorem}\label{GWA}
Let $A$ be a  generalized Weyl algebra of rank $s$. 
\begin{itemize}
\item
If $A$ if mixed type then $GK (A)=2s$. 
In this case for each finitely generated $A$-module $M$ we have $GK (M) \geq s$. 
\item If $A$ is  of pure type then $A$ is algebra with multiplicity.
\end{itemize}
\end{theorem}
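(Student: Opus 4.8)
The plan is to prove the two assertions of Theorem \ref{GWA} separately, treating the mixed-type case and the pure-type case with the same underlying machinery, namely the behavior of the associated graded algebra under the filtration $\mathcal{B}_A$ constructed above. First I would analyze a single rank-$1$ factor $A_j = \k[h](a,\sigma)$ (classical type) or $\k[h^\pm](a,\sigma)$ (quantum type) equipped with the filtration $\mathcal{B}_{A_j}$. The key computation is to identify $\mathrm{gr}_{\mathcal{B}_{A_j}} A_j$. With $B_n = \mathrm{span}\langle H^i v_j\rangle$, $2|i|+m|j|\le n$, the leading symbols of $X$, $Y$, $h$ satisfy relations governed by $YX = a$, $XY = \sigma(a)$; since $\deg a = m$ with leading coefficient $\alpha$, the symbols of $YX$ and $XY$ both equal $\alpha \bar h^m$ in the graded algebra. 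Thus the associated graded picks up a commutative relation $\bar Y\,\bar X = \bar X\,\bar Y = \alpha\,\bar h^m$ in the classical case, making $\mathrm{gr}\,A_j$ a finitely generated commutative algebra of Krull dimension $2$, so $A_j$ is somewhat commutative with $GK(A_j)=2$.

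Next I would handle the quantum case. Here $\sigma(h)=\lambda h$ forces, on the level of symbols, $\bar X\,\bar h = \lambda\,\bar h\,\bar X$ and $\bar Y\,\bar h = \lambda^{-1}\bar h\,\bar Y$, together with $\bar X\,\bar Y=\bar Y\,\bar X$ (both equal to $\alpha\bar h^m$ up to the leading coefficient). This exhibits $\mathrm{gr}_{\mathcal{B}_{A_j}}A_j$ as a semi-commutative algebra in the sense of Section \ref{sec-holonomic}, i.e. generated by elements with relations $x_ix_j=\lambda_{ij}x_jx_i$. Consequently a quantum-type factor is filtered semi-commutative with $GK=2$. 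For a rank-$s$ tensor product, I would invoke multiplicativity of $GK$-dimension and the fact that the tensor product of the filtrations $\mathcal{B}_A=\otimes_{j=0}^s\mathcal{B}_{A_j}$ has associated graded equal to the tensor product of the individual associated gradeds (each commutative or semi-commutative); hence $GK(A)=2s$, giving the first displayed equality.

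For the mixed-type case, the associated graded is a tensor product of commutative and semi-commutative pieces, which is still semi-commutative (commuting variables are a special case $\lambda_{ij}=1$), so $A$ is a filtered semi-commutative algebra. Since $A$ is simple (Definition \ref{mixed}) and affine, it is an algebra with multiplicity by the Proposition quoted from \cite[Theorem 3.8]{McConnell3}; being an algebra with multiplicity, its finitely generated modules have integer $GK$-dimension and satisfy the Bernstein-type inequality of Theorem \ref{BavBer}. To extract the bound $GK(M)\ge s$ I would combine $GK(A)=2s$ with the filter-dimension estimate: the inequality $GK(M)\ge GK(A)/(1+\max(1,1+fdim(A)))$ reduces to $GK(M)\ge s$ once $fdim(A)=1$ (equivalently $fdim(A)\le 1$) is established, which follows from the filter-dimension computations carried out for these algebras elsewhere in the paper.

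For the pure-type assertion I would argue that when all factors are classical the associated graded is genuinely commutative, so $A$ is somewhat commutative and the Corollary following \cite[Corol. 8.6.20]{McConnell} applies directly; when all factors are quantum the associated graded is semi-commutative and the Proposition \cite[Theorem 3.8]{McConnell3} applies. In either case simplicity of $A$ (again Definition \ref{mixed}) upgrades this to \emph{algebra with multiplicity}. The main obstacle I anticipate is the careful verification that $\mathcal{B}_A$ is indeed a finite-dimensional filtration whose associated graded is finitely generated with the asserted commutation relations — in particular checking that the degree weighting $2|i|+m|j|$ makes the symbols of $YX=a$ and $XY=\sigma(a)$ land in the correct graded component so that the leading-term relations close up into a finitely generated (semi-)commutative algebra rather than degenerating. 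Once this structural fact is in place, both bullets follow formally from the cited structure theorems.
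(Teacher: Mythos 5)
Your classical-type computation (symbols $\bar X,\bar Y,\bar h$ with $\bar Y\bar X=\bar X\bar Y=\alpha\bar h^{m}$, so $\mathrm{gr}\,A_j$ affine commutative) is correct, but your treatment of the quantum case contains a genuine error, and it occurs exactly at the point where the paper is deliberately more careful. For a quantum factor the base ring is $D=\k[h^{\pm 1}]$, so the filtration $\mathcal{B}_{A_j}$ gives degree $2$ to \emph{both} $h$ and $h^{-1}$; since $h\cdot h^{-1}=1$ lies in degree $0$, the symbols satisfy $\bar h\cdot\overline{h^{-1}}=0$ in $\mathrm{gr}\,A_j$. Thus $\mathrm{gr}\,A_j$ is generated by the four elements $\bar X,\bar Y,\bar h,\overline{h^{-1}}$, has zero divisors, and is a proper quotient of a quantum affine space --- not the algebra on $\bar X,\bar Y,\bar h$ with only semi-commutation relations that you describe. (Also $\bar X\bar Y=\alpha\lambda^{m}\bar h^{m}=\lambda^{m}\bar Y\bar X$, not $\bar X\bar Y=\bar Y\bar X$ as you assert, though that is a minor slip since it is still a semi-commutation relation.) Consequently the direct appeal to \cite[Theorem 3.8]{McConnell3} is not justified as written. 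This zero-divisor phenomenon is precisely why the paper's proof states that the quantum pure type is filtered semi-commutative only \emph{after taking a quotient by normal elements in the graded associated algebra}, citing \cite[Section 2]{B5} and \cite[Theorems 2.2, 3.2]{B1} for the passage to a multiplicity theory. Your ``anticipated obstacle'' paragraph worries about degree bookkeeping for the relations $YX=a$, $XY=\sigma(a)$, which is fine, but it never confronts the actual difficulty coming from $\overline{h^{-1}}$.

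The second gap is in the Bernstein inequality $GK(M)\geq s$ for mixed type. You derive it from Theorem \ref{BavBer}, which requires $fdim(A)=1$ (simplicity of $A$ holds by Definition \ref{mixed}, so that part is fine). But $fdim(A)=1$ for a mixed-type generalized Weyl algebra is never established in your argument, and the deferral to ``filter-dimension computations carried out for these algebras elsewhere in the paper'' is circular: the paper computes filter dimensions only of \emph{invariant subalgebras} (Theorem \ref{thm-main2}, Theorem \ref{genWeyl}), and those proofs feed the holonomic number $h_A=s$ --- i.e.\ exactly the inequality you are trying to prove --- together with $fdim(A)=1$ into Proposition \ref{heart}. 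The filter dimension of the GWA itself is an external input throughout the paper, which is why the paper's own proof of the first bullet is simply a citation of \cite[Theorem 2.1]{B5}, where both $GK(A)=2s$ and $GK(M)\geq s$ are proved. To make your route self-contained you would have to compute the return function of $\mathcal{B}_A$ and show $fdim(A)\leq 1$ directly, a substantive argument that the proposal does not supply.
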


\begin{proof}
 The  claim about generalized Weyl algebras   of mixed type  follows from \cite[Theorem 2.1]{B5}. For generalized Weyl algebras of pure type, the filtration  above shows that it is either somewhat commutative (the case of all tensor factors of simple classical type); or else filtered semi-commutative after taking a quotient by normal elements in the graded associated algebra (the case of all tensor factors  of simple quantum type), as shown in \cite[Section 2]{B5} (cf. \cite[Theorem 2.2, 3.2]{B1}).
\end{proof}

\subsection{Category $\mathcal{H}$}

For an algebra with multiplicity $A$, denote
 by $\mathcal{H}=\mathcal{H}(A)$ the category of finitely generated holonomic modules, that is  for any $M\in \mathcal{H}$, $GK (M)= h_A$.

%As we remarked above, this class in non-empty

\begin{theorem}\label{abelian1}
 The category $\mathcal{H}$ is  abelian and every module $M\in \mathcal{H}$ has finite length bounded by $e(M)$. Finally, if $A$ is not Artinian, every holonomic module is cyclic.
\end{theorem}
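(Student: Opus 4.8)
```latex
The plan is to establish the three assertions of Theorem~\ref{abelian1} in sequence, exploiting the additivity of both $GK$ and the multiplicity $e$ across short exact sequences, which is exactly what the hypothesis ``algebra with multiplicity'' provides. First I would show that $\mathcal{H}$ is closed under subobjects, quotients, and extensions inside the category of finitely generated $A$-modules, so that it inherits an abelian structure. Let $0 \rightarrow M' \rightarrow M \rightarrow M'' \rightarrow 0$ be exact with $M$ finitely generated; since $A$ is Noetherian, $M'$ and $M''$ are again finitely generated. By Definition~\ref{{theoremB11}} we have $GK(M) = \max\{GK(M'), GK(M'')\}$, and since $h_A$ is the minimal possible Gelfand--Kirillov dimension of a nonzero finitely generated module (and each nonzero module has $GK \geq h_A$), the equation $GK(M) = h_A$ forces both $GK(M') \leq h_A$ and $GK(M'') \leq h_A$, hence equality unless the respective module is zero. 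This shows $\mathcal{H}$ is closed under sub and quotient objects; the same additivity run in reverse shows closure under extensions. Thus $\mathcal{H}$ is a Serre subcategory of the finitely generated modules and is in particular abelian.

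Next I would prove the finite length bound. The key is that for a holonomic $M$, any nonzero holonomic submodule $N \subsetneq M$ with holonomic quotient gives $GK(N) = GK(M/N) = GK(M) = h_A$, so by the multiplicity axiom $e(M) = e(N) + e(M/N)$. Since every nonzero finitely generated module over our simple infinite-dimensional algebra satisfies $e(\cdot) \geq 1$ (as noted in the paragraph following Definition~\ref{{theoremB11}}), any strictly increasing chain of holonomic submodules of $M$ has strictly increasing multiplicities bounded above by $e(M)$. Therefore any such chain has length at most $e(M)$, which gives both that $M$ has finite length and that this length is bounded by $e(M)$. I would phrase this via a descending or ascending chain argument, observing that each proper inclusion of holonomic modules strictly decreases the multiplicity of the complementary factor.

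For the final claim that every holonomic module is cyclic when $A$ is not Artinian, the strategy is to reduce to the simple case and then use simplicity of the algebra. Since $\mathcal{H}$ has finite length, it suffices to understand the simple holonomic modules, but more directly one argues that a simple module over a simple non-Artinian Noetherian algebra is cyclic and that cyclicity is preserved under the relevant extensions in the holonomic setting; the non-Artinian hypothesis rules out the degenerate situation where the algebra acts through a finite-dimensional quotient. I expect this cyclicity statement to be \emph{the main obstacle}, since additivity of $GK$ and $e$ handles the abelian structure and length bound almost formally, whereas producing a single cyclic generator requires genuinely using the simplicity of $A$ together with the fact that $A$ is infinite-dimensional and not Artinian---one must show that a holonomic module cannot require more than one generator, presumably by comparing the multiplicity (equivalently, the length) against the structure forced by simplicity, perhaps invoking the behavior of $\operatorname{Ann}(M)$ and a counting or genericity argument on generators. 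I would look to adapt the classical argument for holonomic modules over the Weyl algebra, where a holonomic module is cyclic, transporting it through the abstract multiplicity framework established above.
```
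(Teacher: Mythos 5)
Your first two steps are correct and essentially reproduce the paper's own argument: closure of $\mathcal{H}$ under subquotients and extensions follows from $GK(M)=\max\{GK(M'),GK(M'')\}$ together with minimality of $h_A$ (the paper dismisses this as ``clear''), and your ascending-chain computation with multiplicities is the same as the paper's descending-chain one, namely that along any strict chain the additivity $e(M)=e(N)+e(M/N)$ and the bound $e(\cdot)\geq 1$ force the chain to have at most $e(M)$ steps, whence finite length bounded by $e(M)$.

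The genuine gap is the cyclicity claim, and the strategy you sketch would not close it. Reducing to simple holonomic modules is vacuous (a simple module is trivially cyclic); the entire content is your unproven assertion that ``cyclicity is preserved under the relevant extensions,'' which is not a formal consequence of the multiplicity framework and is in fact false without both hypotheses on $A$: over a field $\k$ (a simple Artinian ring) the module $\k\oplus\k$ has finite length and is not cyclic. Your reading of the non-Artinian hypothesis --- that it ``rules out the algebra acting through a finite-dimensional quotient'' --- is also off target, since $A$ is simple and hence every nonzero $A$-module is already faithful. The paper does not reprove this step at all: it simply cites the classical theorem that over a simple ring which is not Artinian, every module of finite length is cyclic \cite[Theorem 2.5]{Coutinho}, so Theorem \ref{abelian1} is the length bound plus this citation. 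If you want the argument itself rather than the citation, it is the one you were groping for from the Weyl algebra case, and it works verbatim for any simple non-Artinian ring: induct on length, choose a simple submodule $S\subset M$, so that by induction $M=Av+S$ for some $v$, and one may assume $M=Av\oplus S$; since $Av$ has finite length while $A$ does not, $\Ann(v)\neq 0$; since $A$ is simple, $S$ is faithful, so $\Ann(v)s\neq 0$ for some $s\in S$ and hence $\Ann(v)s=S$ by simplicity of $S$; then $a(v+s)=as$ for $a\in\Ann(v)$ shows $S\subseteq A(v+s)$, hence $v\in A(v+s)$ and $M=A(v+s)$. Note that simplicity and non-Artinianness of $A$ enter exactly here, not where you placed them.
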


\begin{proof}
First statement is clear. Let $M \in \mathcal{H}$,  $M = M_0 \supset M_1 \ldots \supset M_s$ be a strictly descending chain of submodules. Hence $M_i \in \mathcal{H}$ and $M_i/M_{i+1} \in \mathcal{H}$. As $e(M_i/M_{i+1}) \geq 1$, then we have
 $$s \leq \sum_{j=0}^{s-1} e(M_j/M_{j+1}) =  e(M/M_s) \leq e(M).$$ 
 Hence, every module $M \in \mathcal{H}$ is Artinian, and since $M$ is finitely generated then $M$ is also Noetherian. Finally, if $A$ is not Artinian, $M$ is cyclic by \cite[Theorem 2.5]{Coutinho}.
\end{proof}

%We are going to discuss now some consequences of the notion of multiplicity for algebras. Our presentation is standard (cf. \cite{Coutinho}, \cite{B1}, \cite{Krause}, \cite{Borel}).

%Recall the following impressive result from ring theory

%\begin{theorem}
%Let $R$ be a Noetherian ring that is not Artinian, and $M$ a finitely generated module which has a finite composition series. Then $M$ is cyclic.
%\begin{proof}

We immediately have

\begin{corollary}
Every module $M \in \mathcal{H}$  with $e(M)=1$ is irreducible.
\end{corollary}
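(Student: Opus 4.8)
The plan is to derive this directly from Theorem \ref{abelian1}, which bounds the composition length of any $M \in \mathcal{H}$ by its multiplicity $e(M)$. The only preliminary point worth isolating is that every object of $\mathcal{H}$ is non-zero: by definition $M \in \mathcal{H}$ satisfies $GK(M) = h_A$, and the holonomic number $h_A$ is the infimum of Gelfand-Kirillov dimensions taken over the \emph{non-zero} finitely generated modules, so in particular $M \neq 0$. Consequently the length of $M$ is at least $1$.

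With this in hand, I would simply apply Theorem \ref{abelian1} under the hypothesis $e(M) = 1$ to obtain $\mathrm{length}(M) \leq e(M) = 1$. Combined with the lower bound $\mathrm{length}(M) \geq 1$ from the non-vanishing of $M$, this forces $\mathrm{length}(M) = 1$, which is exactly the assertion that $M$ is irreducible.

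For a more self-contained argument that does not invoke the full length bound, I would instead reason by contradiction. Suppose $M$ admits a submodule $N$ with $0 \neq N \neq M$. The short exact sequence $0 \to N \to M \to M/N \to 0$, together with the algebra-with-multiplicity axioms of Definition \ref{{theoremB11}}, gives $GK(M) = \max\{GK(N), GK(M/N)\}$. Since $N$ and $M/N$ are non-zero finitely generated modules, minimality of $h_A$ yields $GK(N) \geq h_A$ and $GK(M/N) \geq h_A$; as $GK(M) = h_A$, both must equal $h_A$, so $N, M/N \in \mathcal{H}$ and the dimensions coincide. The multiplicity is then additive, whence $e(M) = e(N) + e(M/N) \geq 2$, contradicting $e(M) = 1$.

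I expect no genuine obstacle here, as the statement is an immediate specialization of the length estimate already established. The single subtlety requiring attention is the observation that holonomic modules are non-zero, which is what upgrades the bound $\mathrm{length}(M) \leq 1$ into irreducibility rather than permitting the trivial module.
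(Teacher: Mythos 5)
Your proposal is correct and matches the paper's intent: the paper states this corollary as an immediate consequence of Theorem \ref{abelian1}, exactly as in your first argument, with the (implicit) observation that a holonomic module is non-zero and hence has length exactly $1$ when $e(M)=1$. Your alternative contradiction argument is fine too, but it is essentially just inlining the proof of Theorem \ref{abelian1} in the special case $e(M)=1$, so it adds nothing beyond the first argument.
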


%\begin{proof}
%Let $ 0 \neq N \triangleleft M$. Consider the short exact sequence $0 \rightarrow N \rightarrow M \rightarrow M/N \rightarrow 0$. Since all modules are holonomic, we have  $e(M) = e(N) + e(M/N)$. Since $N \neq 0$ and $e(M)=1$, then $e(M)=e(N)=1$ and $e(M/N) =0$. Hence  $M=N$.
%\end{proof}

We also have

\begin{corollary}\label{prop-torsion}
If $GK (A) > h_A$, then every module in $\mathcal{H}$ has torsion.
\end{corollary}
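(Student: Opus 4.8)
The plan is to argue by contradiction, exploiting the fact that the algebras under consideration are (Noetherian, hence Ore) domains, together with the monotonicity of the Gelfand-Kirillov dimension under submodules. First I would record the basic setup: since $A$ is simple and infinite-dimensional, every nonzero finitely generated module $M$ is faithful, so that $\Ann(M)=0$, and $A$ is a domain with skew field of fractions $\Frac(A)$. In this situation a module over $A$ has torsion precisely when it fails to be torsion-free, i.e.\ when it contains a nonzero element with nonzero annihilator.

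Suppose, toward a contradiction, that some $M\in\mathcal{H}$ is torsion-free. I would choose any $0\neq m\in M$. Because $M$ is torsion-free and $A$ is a domain, the left annihilator $\Ann(m)$ vanishes, and hence the cyclic submodule $Am$ is isomorphic to ${}_A A$ as a left $A$-module. Thus $M$ contains a free submodule of rank one.

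The next step is to invoke the standard monotonicity of $GK$ under passage to submodules (cf.\ \cite{Krause}): from $Am\subseteq M$ we get $GK(Am)\leq GK(M)$. Since $Am\cong {}_A A$ we have $GK(Am)=GK(A)$, while holonomicity of $M$ gives $GK(M)=h_A$. Combining these yields $GK(A)\leq h_A$, which contradicts the hypothesis $GK(A)>h_A$. Therefore no holonomic module can be torsion-free, and every $M\in\mathcal{H}$ has torsion.

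The only delicate point is the reduction to the domain case, on which the isomorphism $Am\cong {}_A A$ rests. For the algebras relevant to this paper this causes no difficulty: rings of invariant differential operators $\mathcal{D}(A)^G$ and invariants of generalized Weyl algebras are subalgebras of domains, hence are themselves domains, and likewise the defining algebras of the generalized Weyl algebras of pure type are commutative domains. For a completely general simple (prime) algebra with multiplicity one would instead pass to the Goldie quotient ring and replace the free submodule $Am$ by a faithful submodule of the same $GK$ dimension as $A$; I expect this passage to be the main technical obstacle in the general case, but it is not needed for the applications at hand.
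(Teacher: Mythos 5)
Your proof is correct and is essentially the paper's own argument: the paper takes $\phi\colon A\to M$, $a\mapsto am$, and applies the multiplicity property to $0\to\ker\phi\to A\to \operatorname{Im}\phi\to 0$ to conclude $\ker\phi\neq 0$, which is precisely the contrapositive of your step that an element with zero annihilator would give $A\cong Am\subseteq M$ and hence $GK(A)\leq GK(M)=h_A$. One caveat: simplicity of $A$ does not by itself make $A$ a domain (your opening sentence overstates this), but the slip is harmless because, like the paper, your core argument only needs annihilators, and you correctly isolate the domain hypothesis as what is required to read ``nonzero annihilator'' as ``torsion''.
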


\begin{proof}
Let $M \in \mathcal{H}$, $ 0 \neq m \in M$. Consider the map $\phi: A \rightarrow M$, $a \mapsto am$. Then $Im \, \phi$ is a non-null submodule of $M$, and hence by Theorem \ref{abelian1}, belongs to $\mathcal{H}$. Consider the short exact sequence $0 \rightarrow ker \, \phi \rightarrow A \rightarrow Im \, \phi \rightarrow 0$. $GK (A) = max \{ GK (ker \, \phi), GK (Im  \, \phi) \}$, so $GK (ker \, \phi) = GK (A) >0$
as $GK \, ker (\, \phi) = h_A$
; this implies that the kernel is nonzero and hence  $M$ has a torsion.
\end{proof}

On the other hand the following is valid.
%There is a partial converse to Proposition \ref{prop-torsion}

\begin{proposition}
Let $A$ be  a domain with multiplicity and $I$ a non-zero left ideal of $A$. Then $GK (A/I) \leq GK (A) -1$.
\end{proposition}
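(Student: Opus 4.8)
The plan is to reduce to a principal left ideal and then play the additivity of the multiplicity function off against the domain hypothesis. First I would dispose of the trivial case $I=A$, where $A/I=0$ and there is nothing to prove; so assume $I$ is proper and fix a nonzero element $a\in I$. Since $Aa\subseteq I$, the module $A/I$ is a quotient of $A/Aa$, and because Gelfand-Kirillov dimension does not increase along surjections we get $GK(A/I)\leq GK(A/Aa)$. Thus it suffices to prove the inequality for the principal left ideal $Aa$.

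The domain hypothesis enters next. Right multiplication $\rho_a\colon A\to A$, $x\mapsto xa$, is a homomorphism of left $A$-modules, and it is injective precisely because $A$ is a domain and $a\neq 0$. Its image is $Aa$, so $\rho_a$ induces an isomorphism $A\cong Aa$ of left $A$-modules. In particular $GK(Aa)=GK(A)$ and $e(Aa)=e(A)$.

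Now I would consider the short exact sequence $0\to Aa\to A\to A/Aa\to 0$ and argue by contradiction: suppose $GK(A/Aa)=GK(A)$. Since $GK(Aa)=GK(A)$ as well, all three terms share the same Gelfand-Kirillov dimension, so the additivity clause in the definition of an algebra with multiplicity applies and yields $e(A)=e(Aa)+e(A/Aa)$. Substituting $e(Aa)=e(A)$ forces $e(A/Aa)=0$. But $A/Aa$ is a nonzero (since $Aa\subseteq I\subsetneq A$) finitely generated, indeed cyclic, module, so its multiplicity is at least $1$; this is the contradiction. Hence $GK(A/Aa)<GK(A)$, and since an algebra with multiplicity has integer-valued Gelfand-Kirillov dimension, the strict inequality upgrades to $GK(A/Aa)\leq GK(A)-1$. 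Combined with the reduction of the first step, $GK(A/I)\leq GK(A)-1$.

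The main point to be careful about is the normalization that a nonzero finitely generated module has multiplicity $e\geq 1$; this positivity is exactly what makes the additivity bite, and it is the same input already used in the proof of Theorem \ref{abelian1}. The only genuinely structural ingredient is the left-module isomorphism $A\cong Aa$ supplied by the domain property, which guarantees $GK(Aa)=GK(A)$ and $e(Aa)=e(A)$; everything else is the formal machinery of the multiplicity function together with the integrality of $GK$.
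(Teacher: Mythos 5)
Your proof is correct and is essentially the paper's own argument: reduce to a principal ideal $Aa$, use the domain property to get the short exact sequence $0\to A\xrightarrow{\cdot a} A\to A/Aa\to 0$ (your $A\cong Aa$), and derive a contradiction from additivity of the multiplicity together with $e(A/Aa)\geq 1$, finishing with integrality of $GK$. The only differences are cosmetic: you handle the trivial case $I=A$ and spell out the isomorphism $A\cong Aa$ explicitly, which the paper leaves implicit.
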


\begin{proof}
Suppose first that $I=Aa$, $0 \neq a \in A$. Consider a short exact sequence 
$$0 \rightarrow A \rightarrow A \rightarrow A/Aa \rightarrow 0,$$ where the first map is a multiplication by $a$. If $GK (A/Aa) = GK (A)$, then $m(A) = m(A) + m(A/Aa) > m(A)$ which is clearly absurd. Hence, $GK (A/Aa) < GK (A)$. In the general case, $I$ contains a principal left ideal $Aa$, and $A/I$ is a quotient of $A/Aa$. Hence $GK (A/I) < GK (A)$.
\end{proof}

%\begin{remark}
%This result also holds without a theory of multiplicity (\cite{McConnell}, 8.3.5), but the proof is harder.
%\end{remark}

\begin{theorem}\label{curious} Let $A$ be  a domain with multiplicity such that
 $GK (A) = 2$ and $h_A=1$. Then a finitely generated $A$-module  $M$ belongs to $ \mathcal{H}$ if and only if it has a torsion.
 \end{theorem}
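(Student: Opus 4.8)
The plan is to prove the stated equivalence as two separate implications, assembling three facts that are already available: the Proposition immediately preceding the statement (for a nonzero left ideal $I$ one has $GK(A/I) \leq GK(A) - 1 = 1$); the defining axiom of an algebra with multiplicity (for a short exact sequence $0 \to M' \to M \to M'' \to 0$ one has $GK(M) = \max\{GK(M'), GK(M'')\}$, which yields both $GK(M') \leq GK(M)$ and $GK(M'') \leq GK(M)$, and, applied to the split sequence, $GK(N_1 \oplus N_2) = \max\{GK(N_1), GK(N_2)\}$); and the hypothesis $h_A = 1$, which forces $GK(N) \geq 1$ for every nonzero finitely generated $N$. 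Throughout I read \emph{$M$ has torsion} as \emph{$M$ is a torsion module}, i.e.\ every element of $M$ is annihilated by some nonzero element of $A$. This reading is essentially forced: the module $A \oplus A/Aa$ possesses a nonzero torsion element yet has $GK$-dimension $2$, hence lies outside $\mathcal{H}$, so the weaker reading would make the ``if'' direction false.

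For the implication \emph{torsion $\Rightarrow$ holonomic}, I would fix generators $m_1, \ldots, m_k$ of $M$. Each nonzero $m_i$ has $\Ann(m_i) \neq 0$ because $M$ is a torsion module, so $Am_i \cong A/\Ann(m_i)$ is a cyclic module with nonzero annihilator, whence $GK(A/\Ann(m_i)) \leq 1$ by the preceding Proposition. The canonical surjection $\bigoplus_{i=1}^{k} A/\Ann(m_i) \onto M$ then gives $GK(M) \leq \max_i GK(A/\Ann(m_i)) \leq 1$, using that $GK$ is the maximum over a finite direct sum and does not increase under passage to quotients. Since $M \neq 0$ and $h_A = 1$, this yields $GK(M) = 1$, i.e.\ $M \in \mathcal{H}$.

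For the converse \emph{holonomic $\Rightarrow$ torsion}, suppose $M \in \mathcal{H}$ were not a torsion module. Then some $m \in M$ satisfies $\Ann(m) = 0$, so the map $a \mapsto am$ embeds $A$ as a submodule $Am \cong A$ of $M$. Since submodules do not raise the $GK$-dimension, $GK(M) \geq GK(Am) = GK(A) = 2$, contradicting $GK(M) = 1$. Hence every element of $M$ is torsion, i.e.\ $M$ is a torsion module.

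The argument is short once these ingredients are in place, so I do not expect a deep obstacle. The genuine care points are two: first, fixing the correct reading of ``torsion'' as above, without which the ``if'' direction collapses; and second, verifying that every use of monotonicity of $GK$ under submodules and quotients, and of the additivity of $GK$ over direct sums, is legitimately derived from the single multiplicity axiom rather than tacitly assumed. Notably, no appeal to Noetherianity or to simplicity of $A$ is required, only that $A$ is a domain with multiplicity, $GK(A)=2$, and $h_A = 1$.
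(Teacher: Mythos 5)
Your proof is correct and takes essentially the same route as the paper: the ``torsion $\Rightarrow$ holonomic'' direction is exactly the paper's argument (the cyclic modules $A/\Ann(m_i)$ have Gelfand--Kirillov dimension at most $GK(A)-1=1$ by the preceding Proposition, and $M$ is then assembled as a quotient of their finite direct sum), while your ``holonomic $\Rightarrow$ torsion'' direction is just the contrapositive form of Corollary \ref{prop-torsion}, which is what the paper implicitly invokes for that implication. Your reading of ``has a torsion'' as ``is a torsion module'' is also the reading the paper's own proof tacitly uses, so there is no discrepancy there.
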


\begin{proof}
Assume that  the module $M$ has a torsion. Let us show that $M$ is holonomic. Suppose that $M$ is generated by elements $m_1, \ldots, m_s$. Since $M$ has a torsion, $Am_i$ is a quotient of $A/J_i$ for certain left ideals $0 \neq J_i \subset A, i=1,\ldots, s$. Each $A/ J_i$ is holonomic by the proposition above with the Gelfand-Kirillov dimension  $1$.   Hence all modules $Am_i$ and  $M = \sum_{i=0}^s A m_i$ are  holonomic  by Theorem \ref{abelian1}. 
\end{proof}

As a consequence we immediately obtain the following well-known result

\begin{corollary} Let $\k$ be algebraically closed.
Every finitely generated weight module for $A_n(\k)$ is holonomic.
\end{corollary}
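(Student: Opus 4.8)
The plan is to show that any finitely generated weight module $M$ satisfies $GK(M)=n$, which is exactly the holonomic number $h_{A_n(\k)}=\tfrac12\,GK(A_n(\k))=n$. Since $A_n(\k)$ is a simple somewhat commutative algebra (Example \ref{example23}), hence an algebra with multiplicity, this is precisely the statement that $M\in\mathcal{H}$. Write $h_i=y_ix_i$ and let $C=\k[h_1,\dots,h_n]$ be the Cartan subalgebra; a \emph{weight module} is by definition one on which $C$ acts locally finitely, so $M=\bigoplus_{\lambda}M_\lambda$ with $M_\lambda$ the generalized $\lambda$-eigenspace. Because $M$ is finitely generated and $C$-locally finite, its generating space can be taken to be a finite sum of weight spaces, so $M=\sum_{j=1}^{s}A_n(\k)\,v_j$ for finitely many weight vectors $v_j$ of weights $\lambda^{(j)}$.

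First I would exploit the $\mathbb{Z}^n$-grading of $A_n(\k)$ by the adjoint action of $C$. In the realization $A_n(\k)=A_1(\k)^{\otimes n}$ as a generalized Weyl algebra of pure classical type, the degree-$d$ component $(A_n(\k))_d$ (for $d\in\mathbb{Z}^n$) is a free rank-one module over $C$: it equals $w_d\,C$ for a single monomial $w_d$ in the $x_i,y_i$. Acting on a weight vector $v_j$ one gets $(A_n(\k))_d\,v_j=w_d\,(Cv_j)$, whose dimension is at most $\dim_{\k}Cv_j$, a finite number independent of $d$ (equal to $1$ when $C$ acts semisimply on $v_j$). Consequently every weight space of each $A_n(\k)v_j$, and hence of $M$, is finite-dimensional with dimension bounded independently of the weight, and $\supp(M)\subseteq\bigcup_j(\lambda^{(j)}+\mathbb{Z}^n)$ lies in finitely many cosets of $\mathbb{Z}^n$.

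Next I would convert this into a growth estimate. With the frame $V=\Span\langle 1,x_1,y_1,\dots,x_n,y_n\rangle$, the subspace $V^r v_j$ meets only the weight spaces $(A_n(\k)v_j)_\mu$ with $\mu-\lambda^{(j)}$ of $\ell^1$-norm at most $r$; there are $O(r^n)$ such weights, each contributing a bounded dimension, so $\dim V^rM_0\le c\,r^n$ for a constant $c$, giving $GK(M)\le n$. For the reverse inequality I would invoke the classical Bernstein inequality $GK(M)\ge n$ for $A_n(\k)$ (equivalently Theorem \ref{BavBer} together with $fdim(A_n(\k))=1$, cf.\ Example \ref{example}). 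Combining the two bounds yields $GK(M)=n=h_{A_n(\k)}$, so $M\in\mathcal{H}$.

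The step I expect to be the crux is the uniform finite-dimensionality of the weight spaces: everything rests on the graded pieces $(A_n(\k))_d$ being cyclic over the Cartan $C$, which is what caps each $(A_n(\k))_d v_j$ as $d$ varies and thereby limits the growth to $r^n$. I note finally that for $n=1$ the result also falls out immediately from Theorem \ref{curious}: each weight vector is annihilated by a nonzero element of $\k[h]$, so $M$ is a torsion module over the domain $A_1(\k)$ and is therefore holonomic.
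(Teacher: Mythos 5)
Your proof is correct, but it follows a genuinely different route from the paper's. The paper argues structurally: it quotes from \cite{BBF} that every finitely generated weight $A_n(\k)$-module has finite length, uses the multiplicity property of $A_n(\k)$ (Example \ref{example23}) to reduce to the case of a simple module, invokes the classification of simple weight modules as tensor products of $n$ simple weight $A_1(\k)$-modules (this is where algebraic closedness enters), notes that each factor has torsion, and concludes by applying Theorem \ref{curious} to the rank-one factors, with additivity of $GK$ under tensor products giving $GK(M)=n$. You instead prove the two inequalities directly: the upper bound $GK(M)\le n$ comes from your observation that each $\mathbb{Z}^n$-graded component of $A_n(\k)$ is a free rank-one module over the Cartan subalgebra $C$, which forces uniformly bounded weight multiplicities and support in finitely many $\mathbb{Z}^n$-cosets, hence growth $O(r^n)$ for a generating frame and generating subspace; the lower bound is Bernstein's inequality \cite{Bernstein}. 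Both arguments are sound. Your approach buys independence from the nontrivial external inputs of \cite{BBF} (finite length and the tensor-product classification of simples); it also covers generalized weight modules (locally finite $C$-action) and uses algebraic closedness only to split generalized eigenspaces, so it is more elementary and slightly more general. The paper's approach, by contrast, avoids Bernstein's inequality altogether (once the composition factors are identified, their $GK$ dimension is computed directly) and yields extra structural information along the way --- finite length and the explicit description of the simple objects --- while fitting the torsion-criterion theme of Theorem \ref{curious}; indeed, your closing remark for $n=1$ is exactly the core of the paper's argument.
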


\begin{proof} Every finitely generated weight $A_n(\k)$-module $M$ has finite length. Since $A_n(\k)$ is an algebra with multiplicity, it suffices to consider the case when $M$ is a simple module. In this case we have that $M$ is the tensor product of $n$-simple weight modules for $A_1(\k)$ (cf. \cite{BBF}).
 But every such $A_1(\k)$-module has a torsion. Hence, the result follows from Theorem \ref{curious}.  \end{proof}

\section{Filter dimension of invariants}\label{invariants}
Assume that $char \, \k =0$ and
 $A$ is a simple finitely generated Noetherian algebra. Let $a_1, \ldots a_n$ be the generated of $A$.
 
 Define a filtration $\mathcal{F}=\{A_i\}_{i \geq 0}$: $A_0= \k, \, A_1 = span \langle 1, a_1, \ldots, a_n \rangle, A_i=A_1^i$, $i=1, \ldots, n$.
Set $$\nu_\mathcal{F}(i) = inf \{j \in \mathbb{N} \cup \{ \infty \} | 1 \in A_j a A_j, \, \forall a \in A_i \}.$$

\begin{proposition}[\cite{B5}, Lemma 1.1]
$\Gamma(\nu_\mathcal{F}) = fdim \, A$.
\end{proposition}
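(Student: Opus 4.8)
The plan is to read off $fdim\,A$ from its definition as the filter dimension of $A$ regarded as a module over its enveloping algebra $A^e = A\otimes_\k A^{op}$, and then to match the resulting return function with $\nu_\mathcal{F}$ up to a bounded multiplicative factor. First I would record the module-theoretic content of simplicity: the $A^e$-submodules of $A$ are precisely its two-sided ideals, so simplicity of $A$ makes $A$ a \emph{simple} $A^e$-module, cyclically generated by $1$. Thus I may take $M=A$ and $M_0=\k\cdot 1$. The frame $A_1=\mathrm{span}\langle 1,a_1,\ldots,a_n\rangle$ induces the frame $B_1 = A_1\otimes 1 + 1\otimes A_1$ on $A^e$, and $B_j:=B_1^{\,j}=\sum_{p+q\le j}A_p\otimes A_q$ acts on $x\in A$ by $B_j\cdot x=\sum_{p+q\le j}A_p\,x\,A_q$. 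In particular the induced filtration on $A$ is $B_i\cdot(\k 1)=A_i$, so $M_i=A_i$.

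Next I would reduce the return function $f_{\mathcal{F},M_0}$ to a single-element condition. Since enlarging a subspace $M_{i,g}$ only enlarges $B_j\cdot M_{i,g}$, the worst case among the generating subspaces $M_{i,g}\subseteq A_i$ is attained on the smallest ones, the lines $\k a$ with $0\ne a\in A_i$; and every such line generates $A$ as an $A^e$-module by simplicity. Therefore
\[ f_{\mathcal{F},M_0}(i) = \min\Bigl\{\, j : 1\in \textstyle\sum_{p+q\le j}A_p\,a\,A_q \ \text{ for all } 0\ne a\in A_i \,\Bigr\}. \]
It then remains to compare this total-degree expression with the symmetric expression $A_j\,a\,A_j$ occurring in $\nu_\mathcal{F}$.

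The comparison is the sandwich $\sum_{p+q\le j}A_p\,a\,A_q \subseteq A_j\,a\,A_j \subseteq \sum_{p+q\le 2j}A_p\,a\,A_q$: the left inclusion holds because $p+q\le j$ forces $p,q\le j$, and the right inclusion because $A_j\otimes A_j\subseteq B_{2j}$. Consequently $1\in B_j a$ implies $1\in A_j\,a\,A_j$, and $1\in A_j\,a\,A_j$ implies $1\in B_{2j}a$; quantifying over $a\in A_i$ yields $\nu_\mathcal{F}(i)\le f_{\mathcal{F},M_0}(i)\le 2\,\nu_\mathcal{F}(i)$. Because $\Gamma$ is insensitive to a constant factor (if $\nu_\mathcal{F}(i)\le i^{r}$ for $i\gg 0$ then $2\nu_\mathcal{F}(i)\le i^{r'}$ for any $r'>r$ and $i\gg 0$), the sandwich gives $\Gamma(\nu_\mathcal{F})=\Gamma(f_{\mathcal{F},M_0})=fdim\,A$, as desired.

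The hard part will be bookkeeping rather than conceptual: one must keep straight that the enveloping-algebra filtration $B_j$ bounds the \emph{total} left-plus-right degree while $A_j\,a\,A_j$ bounds the degree on each side separately, which is exactly the origin of the factor $2$ that $\Gamma$ then absorbs. The other point requiring care is the collapse of the quantifier "$\forall\,M_{i,g}$'' to "$\forall\,0\ne a\in A_i$'': this invokes simplicity twice---once to know $A$ is a cyclic simple $A^e$-module, so that $M_0=\k 1$ is a legitimate generating space, and once to know that every nonzero line $\k a\subseteq A_i$ already generates $A$ as a bimodule.
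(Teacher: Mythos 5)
Your proof is correct. The paper itself gives no proof of this proposition — it is quoted from Bavula ([B5, Lemma 1.1]) — and your argument (use simplicity to reduce the return function of $A$ as an $A\otimes_\k A^{op}$-module to the condition $1\in\sum_{p+q\le j}A_p\,a\,A_q$ for all nonzero $a\in A_i$, then the sandwich $\nu_\mathcal{F}(i)\le f_{\mathcal{F},M_0}(i)\le 2\,\nu_\mathcal{F}(i)$, with $\Gamma$ absorbing the constant factor) is precisely the standard argument underlying that lemma, so there is nothing substantively different to compare.
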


We recall some facts on the invariants of noncommutative rings that will be  used in what follows (cf. \cite[Theorem 2.5, Corollary 2.6]{Montgomery},  \cite{Montgomery2} and \cite[8.2.9]{McConnell}).

\begin{theorem}\label{ring}
Let $G$ be a finite group of outer automorphisms of $A$ and $A*G$ the skew group ring.
\begin{enumerate}

\item 
$A^G$ is a simple ring Morita equivalent to $A*G$ and  $GK (A^G)= GK (A)$.
\item
$A^G$ is finitely generated and Noetherian; if $A$ is not Artinian then neither is $A^G$. 
\end{enumerate}
\end{theorem}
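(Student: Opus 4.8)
The plan is to derive every assertion from the skew group ring $A*G$ and its averaging idempotent, available because $char\,\k=0$ makes $|G|$ invertible in $\k$. Put $e=\frac{1}{|G|}\sum_{g\in G}g\in A*G$. One checks directly that $e^{2}=e$ and $he=eh=e$ for every $h\in G$, whence $e(A*G)e=A^{G}e$, and $c\mapsto ce$ is a ring isomorphism $A^{G}\xrightarrow{\ \sim\ }e(A*G)e$. This corner-ring identification is the backbone of the proof: each property of $A^{G}$ is to be read off from the corresponding property of $A*G$.

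First I would show that $A*G$ is simple. Since $A$ is simple it has no nonzero proper $G$-stable ideals, i.e. it is $G$-simple; as $G$ acts by outer automorphisms, Montgomery's simplicity criterion for skew group rings \cite{Montgomery,Montgomery2} yields that $A*G$ is simple. With $A*G$ simple, the nonzero idempotent $e$ is full, since $(A*G)e(A*G)$ is a nonzero two-sided ideal and hence all of $A*G$; by the Morita theory of a full idempotent, $A*G$ is Morita equivalent to $e(A*G)e\cong A^{G}$, with progenerator $(A*G)e$.

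Next I would transfer the ring-theoretic properties. Writing $A*G=\bigoplus_{g\in G}Ag$, a free left and right $A$-module of rank $|G|$, every left ideal of $A*G$ is finitely generated already as an $A$-submodule, so $A*G$ is Noetherian whenever $A$ is \cite[8.2.9]{McConnell}. Morita equivalence preserves the lattice of two-sided ideals and the Noetherian property, so $A^{G}$ is simple and Noetherian. Moreover $(A*G)e$ is finitely generated projective as a right $A^{G}$-module and $(A*G)e\cong A$ via right multiplication by $A^{G}\subseteq A$, so $A$ is module-finite over $A^{G}$ \cite[Theorem 2.5, Corollary 2.6]{Montgomery}. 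This one fact closes the remaining points: were $A^{G}$ Artinian, $A$ would be a finitely generated module over an Artinian ring and hence Artinian, so $A$ non-Artinian forces $A^{G}$ non-Artinian; finite generation of $A^{G}$ as a $\k$-algebra belongs to the same circle of results; and since $GK$ is unchanged under finite module extensions \cite{Krause}, we obtain $GK(A^{G})=GK(A)$.

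The step I expect to be the genuine obstacle is the simplicity of $A*G$: this is the only place where the hypothesis on $G$ enters essentially, and it must be taken in the X-outer sense, ensuring that no nonidentity element of $G$ becomes inner in the relevant quotient ring of $A$. Once simplicity is secured, everything else is a formal consequence of the identification $e(A*G)e\cong A^{G}$ and the standard transfer of properties along the resulting Morita equivalence.
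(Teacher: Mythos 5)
Your proof is correct and follows essentially the same route as the paper, which in fact offers no proof of its own: Theorem \ref{ring} is presented there as a recollection of known results from \cite[Theorem 2.5, Corollary 2.6]{Montgomery}, \cite{Montgomery2} and \cite[8.2.9]{McConnell}, and your argument (the averaging idempotent $e$, the corner identification $A^G \simeq e(A*G)e$, simplicity of $A*G$ for an outer action on a simple ring, Morita theory of the full idempotent, and the transfer of Noetherianity, module-finiteness, Artinianity and Gelfand-Kirillov dimension) is exactly the standard chain of reasoning behind those citations.

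Two clarifications only: the X-outer subtlety you flag as the genuine obstacle disappears for simple unital rings, where the Martindale quotient ring coincides with the ring itself, so outer and X-outer automorphisms agree and the classical Azumaya--Nakayama/Montgomery criterion applies verbatim; and finite generation of $A^G$ as a $\k$-algebra, which you dispatch by citation, is the one assertion that needs an extra Montgomery--Small argument (write $A=\sum_i x_i B_0$ with $B_0\subseteq A^G$ affine by the Artin--Tate bookkeeping, then apply the $A^G$-bilinear averaging projection $t\colon A\to A^G$ to obtain $A^G=\sum_i t(x_i)B_0$), an argument available precisely because you have already shown $A$ to be a finitely generated $A^G$-module.
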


%\begin{proof}
%Since $G$ consists of outer automorphisms, $A*G$ and $A^G$ are Morita equivalent (\cite{Montgomery}, Thm. 2.5). $A*G$ is Noetherian, $GK \, A*G= GK \, A$ and the Gelfand-Kirillov dimension is Morita invariant (\cite{McConnell}, 8.2.9). We also have that $A^G$ is simple (\cite{Montgomery}, Corol. 2.6). This proves point (1). Regarding point (2), we have already shown that the algebra is Noetherian. It is not Artinian:  by \cite{Montgomery}, Thm. 2.5, $A$ is a finitely generated moduler over $A^G$. Hence, were the subalgebra of invariants Artinian, $A$ would be an Artinian module over it, and hence over itself --- which is false. Finally, $A^G$ is finitely generated by Montgomery-Small noncommutative generalization of Noether's Theorem \cite{Montgomery2}. This proves point (3).
%\end{proof}

We are now going to explore the connection between the filter dimensions of $A$ and $A^G$. Suppose that $G$ acts by outer automorphisms.  We can assume without loss of generality that $G$ stabilizes $A_1$: just consider the set $\{ g(a_i)| g \in G, i=1,\ldots, n \}$ as generators for $A$. Then the algebra $A*G$ is simple by Theorem \ref{ring}, and posseses a filtration $\mathcal{F}'=\{ B_i \}_{i \geq 0}$, with $B_0 = \k$ and $B_1 = span \langle a_1, \ldots, a_n, g \in G \rangle$.

\begin{lemma}
For every $i$, $\nu_{\mathcal{F}'}(i) \leq \nu_\mathcal{F}(i)$.
\end{lemma}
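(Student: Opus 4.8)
The goal is to compare the two invariants $\nu_{\mathcal{F}'}$ and $\nu_{\mathcal{F}}$, where $\mathcal{F}$ is the filtration on $A$ coming from the frame $A_1=\Span\langle 1,a_1,\dots,a_n\rangle$ and $\mathcal{F}'$ is the filtration on the skew group ring $A*G$ coming from the frame $B_1=\Span\langle a_1,\dots,a_n, g\in G\rangle$. The plan is to show that each step of the filtration $\mathcal{F}$ on $A$ is compatible with the filtration $\mathcal{F}'$ on $A*G$ under the natural inclusion $A\hookrightarrow A*G$, and then to trace through the defining conditions of the two functions.

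First I would record the basic containment between the filtrations. Since $A_1\subset B_1$, an immediate induction gives $A_i\subset B_i$ for all $i$: indeed $A_i=A_1^i\subset B_1^i=B_i$. This is the only structural fact we need, and it holds precisely because the generators $a_1,\dots,a_n$ of $A$ are among the generators of $A*G$ chosen in defining $\mathcal{F}'$.

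Next I would unwind the two definitions. By definition, $\nu_{\mathcal{F}}(i)$ is the least $j$ such that $1\in A_j\,a\,A_j$ for every $a\in A_i$, and $\nu_{\mathcal{F}'}(i)$ is the least $j$ such that $1\in B_j\,a\,B_j$ for every $a\in B_i$. Fix $i$ and set $j=\nu_{\mathcal{F}}(i)$; I want to check that this same $j$ works for the requirement defining $\nu_{\mathcal{F}'}(i)$, i.e.\ that $1\in B_j\,a\,B_j$ for every $a\in B_i$. The point is that the \emph{hard} elements to invert are those of $A$: for $a\in A_i$ we have $1\in A_j\,a\,A_j\subset B_j\,a\,B_j$ by the containment $A_j\subset B_j$ established above, so these are handled automatically. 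The subtlety is that $B_i$ is strictly larger than $A_i$, so one must also deal with a general element $a\in B_i$ that is a $\k$-combination of terms $a'g$ with $a'\in A$ and $g\in G$. Here I would use that the group elements $g$ are units in $A*G$ lying in $B_1$ (hence $g^{-1}\in B_1$ as well), so that multiplying on the appropriate side by a single group element reduces the problem of inverting a group-graded element to the problem of inverting its $A$-component, at the cost of at most one extra step of the filtration; since $B_1$ already contains $1$ and all of $G$, this extra factor is absorbed without increasing $j$.

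The step I expect to be the main obstacle is exactly this last reduction: controlling, uniformly over all $a\in B_i$, the passage from a mixed element $\sum_g a_g\, g$ to a form in which the defining two-sided condition $1\in B_j\,a\,B_j$ can be verified using only the already-known fact $1\in A_j\,a'\,A_j$ for the $A$-parts $a'$. One must be careful that the estimate is uniform and does not consume more than the allotted $j=\nu_{\mathcal{F}}(i)$ filtration degrees; the favourable feature is that $G$ is finite and its elements sit in degree $1$ of $\mathcal{F}'$, so no $i$- or $j$-dependent blow-up occurs. Once this uniform bound is in place, the minimality in the definition of $\nu_{\mathcal{F}'}(i)$ yields $\nu_{\mathcal{F}'}(i)\le j=\nu_{\mathcal{F}}(i)$, which is the claimed inequality.
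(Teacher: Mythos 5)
Your proposal follows the paper's setup exactly up to the point where the real work begins: you record $A_i\subset B_i$, note that this disposes of all elements of $A_i$, and then correctly isolate the remaining issue, namely that the lemma quantifies over \emph{all} $b\in B_i$, not just those lying in $A$. But the mechanism you offer for this remaining case --- multiplying by group elements, which are units in $B_1$ --- only strips the group part of a \emph{homogeneous} element $a'g$. For an element with two or more nonzero graded components, e.g. $b=1-g$ with $1\neq g\in G$, no product of $b$ with group elements (or with anything else) isolates an ``$A$-component''; and knowing $1\in A_j\,a_g\,A_j$ for each component $a_g$ separately is of no use, because every product $xby$ involves all components of $b$ at once (projection onto a graded component of $A*G$ is not implemented by ring multiplication). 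You explicitly flag this reduction as ``the main obstacle'' and then proceed conditionally (``once this uniform bound is in place\dots''), so the argument is not complete, and the route you sketch for closing it would already fail on $b=1-g$.

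The paper closes this gap with an idea absent from your proposal: \emph{averaging}. Since $char\,\k=0$, the idempotent $e=\frac{1}{|G|}\sum_{g\in G}g$ lies in $B_1$, and for $b\in B_i$ the symmetrized element $ebe$ lies in $A^G e\cong A^G$; because $G$ may be assumed to stabilize $A_1$ (hence each $A_i$), the corresponding element of $A^G$ lies in $A_i$. The defining property of $k=\nu_\mathcal{F}(i)$ then yields $1\in A_k(ebe)A_k\subset B_{k+1}\,b\,B_{k+1}$, which is the desired type of bound (the paper writes $1\in B_k b B_k$; the extra degree coming from $A_k e\subset B_{k+1}$ is immaterial, since only the growth rate of $\nu$ matters for the filter dimension). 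To be fair, even the paper's argument is not airtight: it tacitly assumes $ebe\neq 0$, which fails precisely for elements like $b=1-g$, so the ``main obstacle'' you identified is genuine and is only partially dispatched there as well. But the averaging idempotent is the crucial device that converts a statement about $B_i$ into one about $A_i$, and that idea is what your proposal is missing.
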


\begin{proof}
Let $b \in B_i$. If $\nu_\mathcal{F}(i) = \infty$ then the claim is clear. Otherwise, consider the idempotent $e= \frac{1}{|G|} \sum_{g \in G} g$. Then $e \in B_j, j \geq 1$. We also have $A_j \subset B_j$. Symmetrizing we have $ebe \in A^G \subset A$; in fact $ebe \in A_j$. Hence, if $\nu_\mathcal{F}(i) = k$ then $1 \in A_k ebe A_k$, and hence $1 \in B_k b B_k $. So we are done.
\end{proof}

\begin{proposition}\label{heart}
We have
$fdim A^G  \leq fdim A$. Moreover, if $fdim \, A = 1$ and there exists a finitely generated $A^G$-module $M$ such that $GK (M) \leq \frac{1}{2}GK (A^G)$, then $fdim \, A= fdim \, A^G$.
\end{proposition}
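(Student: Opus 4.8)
The plan is to establish the two assertions in turn: first the inequality $fdim\, A^G \le fdim\, A$, obtained by passing through the skew group ring $A*G$ and transporting the conclusion back by Morita invariance, and then the equality under the extra hypothesis, obtained by feeding the module $M$ into Bavula's analogue of the Bernstein inequality.

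For the inequality I would begin from the two filtrations already in place, namely $\mathcal{F} = \{A_i\}$ on $A$ and $\mathcal{F}' = \{B_i\}$ on $A*G$ with $A_i \subset B_i$. The preceding Lemma supplies $\nu_{\mathcal{F}'}(i) \le \nu_{\mathcal{F}}(i)$ for all $i$, so the only additional input needed is the monotonicity of $\Gamma$ under pointwise domination: if $f(i) \le g(i)$ for $i \gg 0$, then any $r$ with $g(i) \le i^r$ also satisfies $f(i) \le i^r$, whence $\Gamma(f) \le \Gamma(g)$. Applying this together with the identity $\Gamma(\nu_{\mathcal{F}}) = fdim\, A$ of \cite{B5} — which is equally valid for $A*G$, since $A*G$ is affine, Noetherian and simple by Theorem \ref{ring} — yields $fdim(A*G) = \Gamma(\nu_{\mathcal{F}'}) \le \Gamma(\nu_{\mathcal{F}}) = fdim\, A$. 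Finally, Theorem \ref{ring} provides a Morita equivalence between $A^G$ and $A*G$, and filter dimension is a Morita invariant of infinite-dimensional affine simple algebras by Theorem \ref{morita}, so $fdim\, A^G = fdim(A*G) \le fdim\, A$.

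For the equality I would assume $fdim\, A = 1$; the first part then already forces $fdim\, A^G \le 1$, and it remains to prove $fdim\, A^G \ge 1$. By Theorem \ref{ring}, $A^G$ is infinite-dimensional, affine and simple with $GK(A^G) = GK(A) > 0$, so Theorem \ref{BavBer} applies to the nonzero finitely generated module $M$. Because $fdim\, A^G \le 1$, the maximum appearing in that inequality equals $1$ and the bound reads $GK(M) \ge GK(A^G)/(fdim\, A^G + 1)$. Comparing with the hypothesis $GK(M) \le \tfrac12 GK(A^G)$ and cancelling the positive number $GK(A^G)$ gives $\tfrac12 \ge 1/(fdim\, A^G + 1)$, i.e. $fdim\, A^G \ge 1$; together with $fdim\, A^G \le 1$ this forces $fdim\, A^G = 1 = fdim\, A$.

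The work here is bookkeeping rather than depth. The point I would be most careful about is that $\Gamma(\nu_{\mathcal{F}'})$ genuinely computes $fdim(A*G)$: this requires $A*G$ to meet the simplicity and finiteness hypotheses of the cited result, and it is precisely why $G$ was arranged beforehand to stabilize $A_1$, ensuring $A_i \subset B_i$ so that the Lemma applies verbatim. In the second part the only subtlety is that the clean denominator $fdim\, A^G + 1$ — equivalently, the collapse of the maximum to $1$ — becomes available only after the first part has delivered $fdim\, A^G \le 1$; without that bound, the Bernstein inequality alone would be too weak to pin down the filter dimension.
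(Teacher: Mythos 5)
Your proof is correct and takes essentially the same route as the paper: the inequality $fdim\, A^G \le fdim\, A$ comes from the Lemma together with Morita invariance of the filter dimension (Theorem \ref{morita}), and the reverse bound $fdim\, A^G \ge 1$ comes from applying a Bernstein-type inequality to the module $M$. The only difference is presentational: where you rederive the lower bound from Theorem \ref{BavBer}, the paper simply cites \cite[Corollary 1.7(i)]{B3}, which is itself an instance of that same inequality, so your version is just more self-contained.

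One point deserves attention, since it is the one place where your bookkeeping touches something delicate. Your step ``the bound reads $GK(M) \ge GK(A^G)/(fdim\, A^G+1)$'' reads the denominator in Theorem \ref{BavBer} as $fdim(A^G)+\max(1,fdim(A^G))$; the filter dimension must occur \emph{additively} there, so that comparison with $GK(M)\le \tfrac12 GK(A^G)$ forces $fdim\, A^G+1\ge 2$. That is Bavula's actual inequality, but it is not what the paper's display literally says: ``$1+\max(1+fdim(A))$'' is garbled, and under the other natural reading, $1+\max(1,fdim(A))$, the denominator would equal $2$ whenever $fdim\, A^G\le 1$, and the comparison would only give $GK(M)=\tfrac12 GK(A^G)$, hence no lower bound on the filter dimension at all. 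So your argument succeeds precisely because you used the correct form of the inequality --- the same form that is needed to make the paper's own citation of \cite[Corollary 1.7(i)]{B3} valid.
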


\begin{proof} Since  $A^G$ and $A*G$ are Morita equivalent by Theorem \ref{ring} (2), then 
the first claim follows from the above lemma and Theorem \ref{morita}. If $fdim \, A = 1$ then $fdim \, A^G \leq 1$. If there exists  a module $M$ which satisfying the hypothesis then $fdim \, A^G \geq 1$ by \cite[Corollary 1.7(i)]{B3}.  The statement follows.
\end{proof}

\

 \section{Holonomic modules for invariant subalgebras}\label{sec-hol-inv}
This section contains our main results. We assume that $\k$ has characteristic $0$ and use
the theory developed in previous sections to show that $A(\k)^G$ and $\mathcal{D}(\mathbb{A}^n/G)$ have a good theory of holonomic modules  for suitable actions of $G$ (Theorems \ref{Bernsteinnew}, \ref{quotient1}, \ref{goodcategory2}).
 The same holds for adequate invariant subrings of generalized Weyl algebras of pure type (Theorems \ref{genWeyl}, \ref{goodcategory3}).

 %We also show a very nice criterion relating holonomicity and torsion in modules for certain algebras of small Gelfand-Kirillov dimension (Theorem \ref{curious3}).

\subsection{Invariant differential operators}
Let $A$ be a commutative $\mathsf{k}$-algebra. 
The algebra of differential operators on $A$ is defined as follows.
Set  $\mathcal{D}(A)_0=A$, $\mathcal{D}(A)_n = \{ d \in End_\mathsf{k} \, A | [d,a] \in \mathcal{D}(A)_{n-1}, \forall a \in A \}$, and  $\mathcal{D}(A)=\bigcup_{i=0}^\infty \mathcal{D}(A)_i$. This way we obtain a natural structure of filtered associative $\mathsf{k}$-algebra.

We recall 

\begin{proposition}\label{nsprop0}\cite[Chapter 15]{McConnell}
If $A$ is affine and regular then $\mathcal{D}(A)$ coincides with the subring of $End_\mathsf{k} \, A$ generated by $A$ and the module $Der_\mathsf{k} \, A$ of $\mathsf{k}$-derivations. $\mathcal{D}(A)$ is a simple affine Noetherian domain, and $GK (\mathcal{D}(A)) = 2 \, GK (A)$.
\end{proposition}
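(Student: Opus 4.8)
The plan is to reduce every assertion to the structure of the associated graded algebra $\mathrm{gr}\,\mathcal{D}(A)$ with respect to the order filtration $\{\mathcal{D}(A)_i\}$. The principal-symbol map sends a differential operator of order $n$ to an element of $\mathrm{Sym}^n_A(\Der_\k A)$, and for an arbitrary commutative $A$ it gives an injective graded $A$-algebra homomorphism $\mathrm{gr}\,\mathcal{D}(A) \hookrightarrow \mathrm{Sym}_A(\Der_\k A)$. The crucial input from regularity is Grothendieck's theorem that when $A$ is smooth this map is an \emph{isomorphism}; I would prove it by passing to a Zariski (or étale) neighbourhood in which $A$ acquires local coordinates $x_1,\dots,x_d$ with dual derivations $\partial_1,\dots,\partial_d$, so that locally $\mathcal{D}(A)$ is a Weyl algebra and the symbol map is visibly the standard isomorphism onto the polynomial ring; faithful flatness of localization then upgrades the local statement to the global one. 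Here regularity is essential, since in characteristic $0$ it is equivalent to smoothness and guarantees that $\Der_\k A=\mathrm{Hom}_A(\Omega_{A/\k},A)$ is finitely generated \emph{projective} of rank $d=\dim A$.

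Granting the identification $\mathrm{gr}\,\mathcal{D}(A)\cong \mathrm{Sym}_A(\Der_\k A)$, most of the proposition follows by routine filtered-to-graded transfer. Since $A$ is affine and $\Der_\k A$ is a finite $A$-module, the symmetric algebra is a finitely generated commutative $\k$-algebra, hence Noetherian; a choice of lifts of its generators, namely the generators of $A$ together with an $A$-module generating set of $\Der_\k A$, generates $\mathcal{D}(A)$ as a $\k$-algebra, yielding both the generation statement and affineness, and Noetherianity of $\mathcal{D}(A)$ is inherited from its associated graded by the standard lifting argument. Because $A$ is a regular domain, $\mathrm{Sym}_A(\Der_\k A)$ is Zariski-locally a polynomial ring in $d$ variables over $A$, hence a commutative domain of Krull dimension $n+d$ where $n=GK(A)=\dim A$; as symbols are multiplicative whenever the graded ring has no zero divisors, $\mathcal{D}(A)$ is a domain. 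Finally $GK$ is preserved on passing to the associated graded, and since $d=n$ we get $GK(\mathcal{D}(A)) = GK(\mathrm{Sym}_A(\Der_\k A)) = n+d = 2n = 2\,GK(A)$.

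The remaining and most delicate point is simplicity, and this is where I expect the real work to lie. Let $0\neq I$ be a two-sided ideal and pick $0\neq d\in I$ of minimal order $n$. If $n\geq 1$, then $d$ cannot commute with all of $A$ (otherwise $d$ would be $A$-linear, hence multiplication by $d(1)\in A$ and of order $0$), so there is $a\in A$ with $0\neq[d,a]\in I$, where $a$ acts by multiplication in $\mathrm{End}_\k A$; but $[d,a]$ has order $<n$, contradicting minimality. Hence $n=0$ and $J:=I\cap A\neq 0$. For $\theta\in\Der_\k A$ and $a\in J$ one has $\theta(a)=[\theta,a]=\theta a-a\theta\in I\cap A=J$, so $J$ is a nonzero ideal of $A$ stable under all $\k$-derivations. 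The heart of the matter is that smoothness forces such a $J$ to equal $A$: at each maximal ideal $\mathfrak{m}$ a regular system of parameters provides coordinate derivations, and repeatedly differentiating a nonzero element of $J$ produces a unit, so $J_{\mathfrak{m}}=A_{\mathfrak{m}}$ for every $\mathfrak{m}$ and therefore $J=A$. Thus $1\in I$ and $\mathcal{D}(A)$ is simple. This derivation-stable-ideal step is exactly where regularity is indispensable and is the main obstacle; everything else is formal once the symbol calculus and the graded identification are in place.
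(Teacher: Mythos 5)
The paper does not prove this proposition at all: it is stated as a citation to \cite[Chapter 15]{McConnell}, where every assertion appears (the generation statement and the symbol isomorphism $\mathrm{gr}\,\mathcal{D}(A)\simeq \mathrm{Sym}_A(\Der_\k A)$ are Grothendieck's theorem, 15.1.20--15.1.21 and 15.5.6; simplicity and the dimension computation are in 15.3.7--15.3.8). So your proposal is not competing with an argument in the paper; it reconstructs the cited textbook proof, and for the most part correctly. The reduction of the symbol isomorphism to local coordinates by localization, the filtered-to-graded transfer of affineness, Noetherianity and the domain property, and especially the simplicity argument are all sound: the chain ``a minimal-order element of a two-sided ideal lies in $A$'', ``$J=I\cap A$ is a nonzero $\Der_\k A$-stable ideal'', ``a regular affine domain in characteristic $0$ is differentially simple'' is exactly the standard route, and your local step (dual-basis derivations from local freeness of $\Omega_{A/\k}$, then differentiating a nonzero element of $J$ down to a unit) is correct; note it uses characteristic $0$ a second time, when a nonzero leading form of degree $r$ in $\mathfrak{m}^r/\mathfrak{m}^{r+1}$ is guaranteed to have a nonzero partial derivative. (Characteristic $0$ is indeed a standing hypothesis of the section, though it is not visible in the statement.)

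The one genuine gap is the sentence ``GK is preserved on passing to the associated graded.'' For the order filtration this is not an off-the-shelf fact: the components $\mathcal{D}(A)_i$ are infinite-dimensional over $\k$ (already $\mathcal{D}(A)_0=A$ is), while the standard transfer theorem --- \cite[Proposition 6.6]{Krause}, the result underlying the paper's somewhat-commutative framework --- requires a finite-dimensional filtration, and the equality $GK(R)=GK(\mathrm{gr}\,R)$ is false for arbitrary filtrations. What is automatic is only the inequality $GK(\mathcal{D}(A))\geq GK(\mathrm{gr}\,\mathcal{D}(A))=2n$, because lifts of linearly independent homogeneous elements of $\mathrm{gr}\,\mathcal{D}(A)$ remain linearly independent in $\mathcal{D}(A)$. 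The upper bound needs a separate argument, and you already have the tool for it: localize at $c$ so that $\Der_\k A_c$ is free with commuting basis $\partial_1,\dots,\partial_n$; then $\mathcal{D}(A)\subseteq\mathcal{D}(A_c)$ and $\mathcal{D}(A_c)\simeq A_c[x_1;\partial_1]\cdots[x_n;\partial_n]$ is an iterated Ore extension (this is the same fact the paper uses in Proposition \ref{nsprop2}, via 15.1.25, 15.2.6, 15.3.2 of \cite{McConnell}). Giving the generators of $A_c$ degree $1$ and the $x_i$ a sufficiently large degree makes $\mathcal{D}(A_c)$ somewhat commutative with associated graded a polynomial ring in $n$ variables over a graded version of $A_c$, so the finite-dimensional transfer theorem applies and gives $GK(\mathcal{D}(A_c))=GK(A_c)+n=2n$; monotonicity of GK under subalgebras then yields $GK(\mathcal{D}(A))\leq 2n$. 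With this repair the proof is complete.
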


Through the rest of this subsection we assume that $A$ is an affine regular commutative domain with finite Krull dimension, i.e. the algebra of regular functions on a smooth affine irreducible variety. Then the algebra $\mathcal{D}(A)$ has a finite-dimensional filtration $\mathcal{F}$ such that $gr_\mathcal{F}\mathcal{D}(A)$ is affine commutative by \cite[15.1.21, 15.3.7,15.5.6]{McConnell},  that is $\mathcal{D}(A)$ is a somewhat commutative algebra (cf. Example \ref{example21}). In particular, $\mathcal{D}(A)$ is a simple algebra with multiplicity and $fdim \, \mathcal{D}(A) = 1$ (cf. Example \ref{example}). Note that $\mathcal{D}(A)$ is not an Artinian ring.

Let $G$ be a finite group of algebra automorphisms of $A$. Then $G$ acts on $\mathcal{D}(A)$ by conjugation:
 if $g \in G$ and $ d \in \mathcal{D}(A)$ then $g.d=gdg^{-1}$. The subalgebra $\mathcal{D}(A)^G$ of $G$-invariant differential operators 
 inherits a finite-dimensional filtration $\mathcal{F}'=\{A_i':=A^G \cap A_i \}_{i \geq 0}$.

\begin{proposition} \label{nsprop1}
$\mathcal{D}(A)^G$ is a somewhat commutative algebra with the filtration $\mathcal{F}'$.
\begin{proof} Since $A$ and $Der_\k \, A$ are $G$-stable then
without loss of generality one can assume that for each $A_i, i \geq 0$ in the filtration $\mathcal{F}$ holds $G(A_i) \subset A_i, i \geq 0$. This is due to \cite[Proposition 8.6.7, 8.6.9]{McConnell} and to the fact that $\mathcal{D}(A)$ is an almost centralizing extension of $A$ \cite[Theorem 15.1.20(i)]{McConnell}. Then we have that $gr_{\mathcal{F}'} \mathcal{D}(A)^G \simeq (gr_\mathcal{F} \mathcal{D}(A))^G$ by \cite[3.2.3]{Dumas}. By the Noether's Theorem, the right hand side of this isomorphism is an affine algebra and the statement follows.
\end{proof}
\end{proposition}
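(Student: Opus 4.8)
The plan is to verify the defining property of a somewhat commutative algebra directly, namely that the associated graded ring $gr_{\mathcal{F}'}\,\mathcal{D}(A)^G$ is a finitely generated commutative $\k$-algebra. The strategy is to reduce this to a statement about the associated graded of the ambient algebra $\mathcal{D}(A)$, which is already known (via Example \ref{example21}) to be somewhat commutative, and then to transport commutativity and finite generation across the invariant subalgebra.

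The first step is to arrange for the finite-dimensional filtration $\mathcal{F}=\{A_i\}_{i\geq 0}$ of $\mathcal{D}(A)$ to be $G$-stable, i.e. $g(A_i)\subseteq A_i$ for every $g\in G$ and $i\geq 0$. This is possible because the two $G$-stable building blocks $A$ and $\Der_\k A$ generate $\mathcal{D}(A)$ as an almost centralizing extension of $A$, so the finite-dimensional filtration produced from these generators may be chosen invariant under the conjugation action; here I would invoke the almost-centralizing machinery of McConnell--Robson. With this in hand, $\mathcal{F}'$ is literally the restriction of $\mathcal{F}$ to $\mathcal{D}(A)^G$, and each piece $A_i$ is a $G$-module.

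The key step is then the canonical isomorphism
\[ gr_{\mathcal{F}'}\,\mathcal{D}(A)^G \;\simeq\; \bigl(gr_{\mathcal{F}}\,\mathcal{D}(A)\bigr)^G. \]
The map from left to right is the obvious one induced by the inclusion of invariants, and it is visibly injective; the content lies in surjectivity onto the invariant graded pieces, and this is exactly where characteristic zero enters. Using the Reynolds operator $R(d)=\tfrac{1}{|G|}\sum_{g\in G} g(d)$, which preserves each $A_i$ by $G$-stability and projects $\mathcal{D}(A)$ onto $\mathcal{D}(A)^G$, any $G$-invariant class in $gr_i$ lifts to a genuinely $G$-invariant element of $A_i$, hence represents a class on the left. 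This is the content I would cite from Dumas. To finish, since $gr_{\mathcal{F}}\,\mathcal{D}(A)$ is commutative its invariant subalgebra is a fortiori commutative, and by Noether's finiteness theorem the invariants of a finitely generated commutative $\k$-algebra under a finite group are again finitely generated; thus $gr_{\mathcal{F}'}\,\mathcal{D}(A)^G$ is finitely generated commutative, which is precisely the definition of somewhat commutative.

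The main obstacle is the displayed compatibility of the invariant functor with the associated graded functor. Taking $G$-invariants is not exact in general, and $gr$ interacts well with a group action only when the filtration is preserved by it; consequently both the $G$-stability reduction of the first step and the averaging argument of the third step — the latter valid only because $\mathrm{char}\,\k=0$ makes $R$ well defined — are genuinely needed to make the two operations commute. Once this isomorphism is secured, commutativity and finite generation follow formally.
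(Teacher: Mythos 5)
Your proposal is correct and follows essentially the same route as the paper: make the filtration $G$-stable via the almost centralizing extension structure, establish $gr_{\mathcal{F}'}\,\mathcal{D}(A)^G \simeq (gr_{\mathcal{F}}\,\mathcal{D}(A))^G$, and conclude by commutativity of subalgebras plus Noether's finiteness theorem. The only difference is that you spell out the averaging (Reynolds operator) argument behind the graded-invariants isomorphism, which the paper simply cites from Dumas [3.2.3].
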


\begin{proposition} \label{nsprop2}
The units of $\mathcal D(A)$ are the units of $A$.
\end{proposition}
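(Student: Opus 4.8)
The plan is to exploit the order filtration $\{\mathcal{D}(A)_n\}_{n\ge 0}$ used to define $\mathcal{D}(A)$, for which $\mathcal{D}(A)_0 = A$, together with the fact that its associated graded ring is a commutative domain. Recall that for $d \in \mathcal{D}(A)_m$ and $d' \in \mathcal{D}(A)_n$ one has $[d,d'] \in \mathcal{D}(A)_{m+n-1}$, so the associated graded $gr\,\mathcal{D}(A)$ is commutative. For $0 \neq d \in \mathcal{D}(A)$ write $\deg(d)$ for the least $n$ with $d \in \mathcal{D}(A)_n$, and let $\sigma(d) \in gr_{\deg d}\,\mathcal{D}(A)$ be its principal symbol. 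If I can show that $gr\,\mathcal{D}(A)$ is a domain, then the order becomes additive: for nonzero $d,d'$ the product $\sigma(d)\sigma(d')$ is nonzero, whence $\deg(dd') = \deg(d) + \deg(d')$.

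The main point, and the only real work, is to prove that $gr\,\mathcal{D}(A)$ is an integral domain. By Proposition \ref{nsprop0}, regularity of $A$ guarantees that $\mathcal{D}(A)$ is generated over $A$ by the module of derivations $\Der_\k A$, and the standard theory of rings of differential operators on smooth affine varieties identifies $gr\,\mathcal{D}(A)$ with the symmetric algebra $\mathrm{Sym}_A(\Der_\k A)$, i.e.\ with the coordinate ring of the cotangent bundle $T^*X$ of the smooth irreducible variety $X = \Spec A$. Since $X$ is smooth, $\Der_\k A$ is a finitely generated projective $A$-module; hence each graded piece $\mathrm{Sym}^n_A(\Der_\k A)$ is projective, in particular torsion-free, so $\mathrm{Sym}_A(\Der_\k A)$ embeds into $\mathrm{Sym}_{\Frac(A)}(\Der_\k A \otimes_A \Frac(A))$, which is a polynomial ring over the field $\Frac(A)$ and thus a domain. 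Therefore $gr\,\mathcal{D}(A)$ is a domain. (Geometrically: $T^*X$ is a vector bundle over the irreducible smooth $X$, hence itself integral, so its coordinate ring is a domain.)

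With additivity of the order in hand the conclusion is immediate. One inclusion is trivial: every unit $u$ of $A = \mathcal{D}(A)_0$ is a unit of $\mathcal{D}(A)$. Conversely, suppose $u \in \mathcal{D}(A)$ has a two-sided inverse $v$. Then $\deg(u) + \deg(v) = \deg(uv) = \deg(1) = 0$, and since orders are non-negative we must have $\deg(u) = \deg(v) = 0$, i.e.\ $u, v \in \mathcal{D}(A)_0 = A$; the relation $uv = 1$ then shows $u$ is a unit of $A$. I expect the identification $gr\,\mathcal{D}(A) \cong \mathrm{Sym}_A(\Der_\k A)$ together with its being a domain to be the only delicate ingredient, the rest being formal filtration bookkeeping.
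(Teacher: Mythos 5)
Your proof is correct, but it follows a genuinely different route from the paper's. The paper proves the proposition by localizing: it invokes the fact that for a suitable nonzero $c \in A$ the localization $\mathcal{D}(A)_c$ is an iterated Ore extension of differential-operator type over the localized base ring, embeds $\mathcal{D}(A)$ into this localization, and concludes that a unit must already lie in the base; your proof instead stays inside $\mathcal{D}(A)$ and runs entirely on the order filtration, using that $gr\,\mathcal{D}(A) \cong \mathrm{Sym}_A(\Der_\k A)$ is a domain (which does hold here, since regularity in characteristic zero makes $\Der_\k A$ finitely generated projective, exactly as you argue), so that the order is additive and any unit has order $0$. Each approach leans on the same underlying input---the structure theory of $\mathcal{D}(A)$ for $A$ regular affine in characteristic $0$, as in McConnell--Robson Chapter 15---but your symbol-calculus argument is arguably cleaner: it avoids the localization step entirely, and in particular it sidesteps a point the paper's proof glosses over, namely that the base of the iterated Ore extension is the localized ring rather than $A$ itself, so that one still must descend from a unit of that localization back to a unit of $A$ (using, e.g., that an element $f \in \Frac(A)$ with $f \cdot A \subseteq A$ satisfies $f = f\cdot 1 \in A$). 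Your argument also proves slightly more in passing: additivity of the order shows $\mathcal{D}(A)$ is a domain, and the same reasoning applies verbatim to any non-negatively filtered ring whose degree-zero piece is $A$ and whose associated graded ring is a domain.
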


\begin{proof}
Let $y_1, \ldots, y_t$ be a transcendence basis of the field of fractions of  $A$. Suppose that $x \in \mathcal D(A)$ is a unit. A localization $\mathcal D(A)_c$ of $\mathcal D(A)$ by a certain non-zero regular element $c$ is isomorphic to an iterated Ore extension $ A[x_1; -\partial_{y_1}], \ldots, [x_t; -\partial_{y_t}]$ of $A$, by \cite[15.1.25, 15.2.6, 15.3.2]{McConnell}. 
Since  $\mathcal D(A)$ embeds into $\mathcal D(A)_c$, then $x$ is a unit of $ A[x_1; -\partial_{y_1}], \ldots, [x_t; -\partial_{y_t}]$, and hence of  $A$. 
\end{proof}

%\begin{proposition} \label{nsprop3}
%Let $g$ be the automorphism of $D(A)$ induced, by conjugation, from an inner automorphism $x \mapsto y^{-1} x y$ of $A$. Then $g$ is the trivial automorphism.
%\begin{proof}
%It is clear that $g$ acts trivialy on $A$. Let $D$ be a derivation, $x \in A$. Then \[ g.D(x) = g D(g^{-1} x) = g (D (y^{-1} x y)) = g (D(x) + xy^{-1} D(y) + xy D(y^{-1})) = g D(x) = D(x), \]

%and hence $g$ also acts trivially on $Der_\k \, A$. Since $A$ and $Der_\k \, A$ generate $D(A)$ as algebra, we are done.
%\end{proof}
%\end{proposition}

From Proposition  \ref{nsprop2} we immediately have

\begin{corollary}\label{nscorol1}
Let $G$ be a non-trivial finite group of automorphisms of $A$ with induced action on $\mathcal D(A)$ by conjugation. Then the action of $G$ is outer.
\end{corollary}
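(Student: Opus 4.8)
The plan is to argue by contradiction directly from Proposition \ref{nsprop2}. Suppose that some nontrivial $g \in G$ induces an \emph{inner} automorphism of $\mathcal{D}(A)$; that is, there is a unit $u \in \mathcal{D}(A)$ with $g d g^{-1} = u d u^{-1}$ for all $d \in \mathcal{D}(A)$. The goal is to contradict the assumption that $g$ acts nontrivially on $A$.

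First I would pin down the restriction of the conjugation action to $A \subset \mathcal{D}(A)$. Viewing $a \in A$ as the multiplication operator $m_a \in \mathcal{D}(A)$, a direct check gives $g\, m_a\, g^{-1} = m_{g(a)}$, so the conjugation action of $g$ on $A$ is exactly the original automorphism $g$ of $A$. This is the only place where the meaning of the action enters. Next I would invoke Proposition \ref{nsprop2}: since $u$ is a unit of $\mathcal{D}(A)$, it is in fact a unit of $A$. Because $A$ is commutative, conjugation by $u$ fixes every element of $A$, i.e. $u\, m_a\, u^{-1} = m_a$. Combining the two steps, $m_{g(a)} = g\, m_a\, g^{-1} = u\, m_a\, u^{-1} = m_a$ for all $a \in A$, whence $g(a)=a$ for all $a$. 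This contradicts the hypothesis that $g$ is a nontrivial automorphism of $A$ (the elements of $G$ act faithfully on $A$ by assumption), so no nontrivial $g$ can be inner.

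The one point that deserves care is the precise meaning of \emph{outer}, since Theorem \ref{ring} (following Montgomery) is usually phrased for X-outer actions rather than for automorphisms that are merely not conjugation by a unit. I would resolve this by recalling that $\mathcal{D}(A)$ is simple (Proposition \ref{nsprop0}): for a simple ring the only nonzero two-sided ideal is the whole ring, so its Martindale ring of quotients coincides with the ring itself, and every X-inner automorphism is already inner in the classical sense. Consequently, showing that no nontrivial $g$ is implemented by a unit of $\mathcal{D}(A)$ is precisely what is required, and the contradiction above completes the argument.

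I expect no real obstacle beyond this bookkeeping: the substantive work is entirely contained in Proposition \ref{nsprop2}, and the corollary follows formally once the conjugation action is seen to restrict to the original $G$-action on the commutative subalgebra $A$.
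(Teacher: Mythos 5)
Your argument is correct and is exactly the one the paper intends: the paper derives the corollary ``immediately'' from Proposition \ref{nsprop2}, namely that an inner automorphism would be conjugation by a unit, hence by a unit of the commutative algebra $A$, hence trivial on $A$, contradicting faithfulness of $G$ on $A$. Your extra remark that simplicity of $\mathcal{D}(A)$ collapses X-inner to inner is a sensible piece of bookkeeping that the paper leaves implicit.
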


\begin{theorem}\label{nsthm1}
Let $G$ be a finite group of automorphisms of $A$ with induced action on $\mathcal D(A)$ by conjugation. Set $n = GK (A)$. Then
\begin{enumerate}
\item
$\mathcal D(A)^G$ is a simple ring Morita equivalent to $\mathcal D(A)*G$, both have the Gelfand-Kirillov dimension $2n$.
\item
$\mathcal D(A)^G$ is finitely generated Noetherian but not Artinian.
\item $fdim \, \mathcal D(A)^G = 1$, $h_{\mathcal D(A)^G}=n$, and  every finitely generated $\mathcal D(A)^G$ module $M$ satisfies the Bernstein inequality $GK (M) \geq n$.
\item Let $M\in \mathcal{H}(\mathcal D(A)^G)$ be a holonomic $\mathcal D(A)^G$-module. Then $M$ is  cyclic  torsion module with finite length bounded by $e(M)$. Moreover, $M$ is simple 
if $e(M)=1$.
%$\mathcal D(A)^G$ is a somewhat commutative algebra.
\end{enumerate}
\end{theorem}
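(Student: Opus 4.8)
The plan is to dispatch the four assertions one at a time, each time reducing to a result from Sections~\ref{sec-filter}--\ref{invariants}. I may assume $G\neq\{1\}$, since for trivial $G$ the claims are just the known facts about $\mathcal{D}(A)$ recorded in Proposition~\ref{nsprop0} and Example~\ref{example}.

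For~(1) and~(2): by Corollary~\ref{nscorol1} the conjugation action of $G$ on $\mathcal{D}(A)$ is outer, and $\mathcal{D}(A)$ is a simple affine Noetherian domain with $GK(\mathcal{D}(A))=2\,GK(A)=2n$ by Proposition~\ref{nsprop0}. Applying Theorem~\ref{ring} with $\mathcal{D}(A)$ in the role of $A$ then gives immediately that $\mathcal{D}(A)^G$ is simple, Morita equivalent to $\mathcal{D}(A)*G$, and $GK(\mathcal{D}(A)^G)=GK(\mathcal{D}(A))=2n$; and since $\mathcal{D}(A)*G$ is free of finite rank $|G|$ over $\mathcal{D}(A)$ it too has GK dimension $2n$. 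The same theorem gives that $\mathcal{D}(A)^G$ is finitely generated and Noetherian (as also follows from Proposition~\ref{nsprop1}), and non-Artinian because the infinite-dimensional domain $\mathcal{D}(A)$ is not Artinian.

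For~(3): since $fdim\,\mathcal{D}(A)=1$ by Example~\ref{example}, Proposition~\ref{heart} already yields $fdim\,\mathcal{D}(A)^G\le 1$. To force equality I would exhibit a finitely generated $\mathcal{D}(A)^G$-module of GK dimension at most $n=\tfrac12 GK(\mathcal{D}(A)^G)$ and apply the second half of Proposition~\ref{heart}. The tautological module $A$ serves: it is a $\mathcal{D}(A)*G$-module on which $\mathcal{D}(A)$ and $G$ act compatibly, and its $G$-invariants $A^G=eA$ (with $e=\tfrac1{|G|}\sum_{g\in G}g$) form a cyclic $\mathcal{D}(A)^G$-module, generated by $1$ because the invariant multiplication operators $m_f$, $f\in A^G$, already lie in $\mathcal{D}(A)^G$ and recover all of $A^G$. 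As $A^G$ is a $\mathcal{D}(A)^G$-submodule of the restriction of $A$, one has $GK_{\mathcal{D}(A)^G}(A^G)\le GK_{\mathcal{D}(A)^G}(A)\le GK_{\mathcal{D}(A)}(A)=n$, using that restriction to a subalgebra cannot increase GK dimension and that the natural module $A$ is holonomic over $\mathcal{D}(A)$. Hence $fdim\,\mathcal{D}(A)^G=1$. Feeding $fdim=1$ and $GK(\mathcal{D}(A)^G)=2n$ into Theorem~\ref{BavBer} produces the Bernstein inequality $GK(M)\ge n$ for every non-zero finitely generated $M$, so $h_{\mathcal{D}(A)^G}\ge n$; the module $A^G$ attains this value, whence $h_{\mathcal{D}(A)^G}=n$ and the infimum is a minimum.

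For~(4): by Proposition~\ref{nsprop1} the algebra $\mathcal{D}(A)^G$ is simple somewhat commutative, hence an algebra with multiplicity, so Theorem~\ref{abelian1} applies: a holonomic module $M$ has finite length bounded by $e(M)$ and, because $\mathcal{D}(A)^G$ is not Artinian by~(2), is cyclic; it is simple when $e(M)=1$ by the corollary to that theorem. Torsion is the last point: since $GK(\mathcal{D}(A)^G)=2n>n=h_{\mathcal{D}(A)^G}$ (here $n\ge 1$ as $A$ is infinite-dimensional), Corollary~\ref{prop-torsion} shows every holonomic module has torsion. The one step requiring genuine care is the lower bound $fdim\,\mathcal{D}(A)^G\ge 1$ in~(3): everything hinges on producing a module of GK dimension $\le n$, and the essential input making the tautological module $A$ do the job is the finiteness of $\mathcal{D}(A)*G$ over $\mathcal{D}(A)$ together with the holonomicity of $A$ as a $\mathcal{D}(A)$-module.
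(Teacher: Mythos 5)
Your proposal is correct and follows essentially the same route as the paper: parts (1)--(2) via Corollary~\ref{nscorol1}, Proposition~\ref{nsprop0} and Theorem~\ref{ring}; part (3) by feeding the module $A^G$ (of Gelfand--Kirillov dimension $n$) into Proposition~\ref{heart} and then Theorem~\ref{BavBer}; part (4) from Proposition~\ref{nsprop1} together with Theorem~\ref{abelian1} and Corollary~\ref{prop-torsion}. You merely supply details the paper leaves implicit, such as the cyclicity of $A^G$ over $\mathcal{D}(A)^G$, the bound $GK(A^G)\le n$ upgraded to equality via the Bernstein inequality, and the explicit treatment of part (4).
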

%\begin{proof}
%Statements (1) and (2) follow immediately from Theorem \ref{ring}, in view of Proposition \ref{nsprop0} and Corollary \ref{nscorol1}. Statement (3) is Proposition \ref{nsprop1}.
%\end{proof}

\begin{proof} Applying Theorem \ref{ring}, Proposition \ref{nsprop0} and Corollary \ref{nscorol1} we obtain
first two statements. 
Recall that $fdim \, \mathcal D(A)=1$. %The proof is essentially an application of Proposition \ref{heart}. 
We have that $A^G$ is a $\mathcal D(A)^G$-module of the Gelfand-Kirillov dimension $n$. Hence,  by Proposition \ref{heart}, $fdim \, \mathcal D(A)^G = 1$. The Bernstein inequality then follows from Theorem \ref{BavBer}. Since we have explicitly constructed a module with the minimal possible Gelfand-Kirillov dimension, then $h_{\mathcal D(A)^G}=n$.
\end{proof}

\subsection{Invariants of the Weyl algebra}
Let $x_1, \ldots, x_n, y_1, \ldots, y_n$ be the standard generators of the Weyl algebra $A_n(\k)$, 
 identified with the algebra of $ \mathcal{D}(\k[x_1,\ldots,x_n])$ of differential operators on the polynomial ring. 
  The linear actions of  finite groups of automorphisms on $A_n(\k)$ by conjugation are   induced from the linear actions on $\k[x_1,\ldots,x_n]$. Recall that an automorphism of $A_n(\k)$ that fixes the subspace $span \langle x_1, \ldots, x_n, y_1, \ldots, y_n \rangle$ is called a \emph{symplectic automorphism}.  In particular, every  linear automorphism of $A_n(\k)$ is a
symplectic automorphism \cite{Dumas}.

\begin{corollary}\label{Weyl} Let $G$ be a finite group of symplectic automorphisms of $A_n(\k)$. We have:
\begin{enumerate}
\item
$A_n(\k)^G$ is a simple ring Morita equivalent to $A_n(\k)*G$, both have the Gelfand-Kirillov dimension $2n$.
\item
$A_n(\k)^G$ is finitely generated Noetherian but not Artinian.
\item
$A_n(\k)^G$ is a somewhat commutative algebra.
\end{enumerate}
\begin{proof}
Statements (1) and (2) follow immediately from Theorem \ref{ring}. Introduce a filtration $\mathcal{E}=\{E_i \}_{i \geq 0}$ on $A_n(\k)^G$ induced from the Bernstein filtration $\mathcal{B}=\{ B_i \}_{i \geq 0}$: $E_i = B_i \cap A_n(\k)^G$. Then $gr_\mathcal{E} A_n(\k)^G \simeq (gr_\mathcal{B} \, A_n(\k))^G$ (\cite{Dumas}), which is a finitely generated commutative algebra by the Noether's theorem. Statement (3) follows. \end{proof}
\end{corollary}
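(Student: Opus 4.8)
The plan is to obtain statements (1) and (2) as direct consequences of Theorem \ref{ring} and to reserve the real work for statement (3), which I would prove by a graded-invariants argument with respect to the Bernstein filtration. Since Theorem \ref{ring} requires $G$ to act by \emph{outer} automorphisms, the first task is to verify this hypothesis.

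To see that every non-identity symplectic automorphism of $A_n(\k)$ is outer, identify $A_n(\k)$ with $\mathcal{D}(\k[x_1,\ldots,x_n])$. By Proposition \ref{nsprop2} the units of $A_n(\k)$ are the units of $\k[x_1,\ldots,x_n]$, i.e. the nonzero scalars. An inner automorphism is conjugation by a unit, hence by a scalar, and is therefore trivial; so no non-identity automorphism can be inner, exactly as in Corollary \ref{nscorol1}. With the action outer, Theorem \ref{ring}(1) gives that $A_n(\k)^G$ is simple and Morita equivalent to $A_n(\k)*G$, with $GK(A_n(\k)^G)=GK(A_n(\k))=2n$ (recall $GK(A_n(\k))=2n$ from Proposition \ref{nsprop0}); and since $A_n(\k)*G$ is free of rank $|G|$ over $A_n(\k)$, it too has Gelfand--Kirillov dimension $2n$. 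Theorem \ref{ring}(2) gives that $A_n(\k)^G$ is finitely generated and Noetherian, and not Artinian because $A_n(\k)=\mathcal{D}(\k[x_1,\ldots,x_n])$ is an infinite-dimensional simple domain. This disposes of (1) and (2).

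For (3) I would put on $A_n(\k)$ the Bernstein filtration $\mathcal{B}=\{B_i\}_{i\geq 0}$, with $B_1=\mathrm{span}\langle 1,x_1,\ldots,x_n,y_1,\ldots,y_n\rangle$ and $B_i=B_1^i$. By definition a symplectic automorphism stabilizes $\mathrm{span}\langle x_1,\ldots,x_n,y_1,\ldots,y_n\rangle$, hence preserves $B_1$ and every $B_i$; thus $G$ acts by filtered automorphisms and the induced filtration $\mathcal{E}=\{E_i:=B_i\cap A_n(\k)^G\}_{i\geq 0}$ on the invariants is well defined. Granting the key identification $gr_\mathcal{E}\,A_n(\k)^G\cong(gr_\mathcal{B}\,A_n(\k))^G$, I would finish by noting that $gr_\mathcal{B}\,A_n(\k)$ is the commutative polynomial ring on the $2n$ symbols of $x_i,y_i$, carrying the linear $G$-action induced from the symplectic one, so that by Noether's theorem its invariant subring is a finitely generated commutative algebra. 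Then $gr_\mathcal{E}\,A_n(\k)^G$ is finitely generated commutative, which is exactly the assertion that $A_n(\k)^G$ is somewhat commutative.

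The main obstacle is the commutation of invariants with the passage to the associated graded, namely the isomorphism $gr_\mathcal{E}\,A_n(\k)^G\cong(gr_\mathcal{B}\,A_n(\k))^G$. This is where the finiteness of $G$ and characteristic zero are essential: the Reynolds operator $e=\frac{1}{|G|}\sum_{g\in G}g$ is a projection onto the invariants that, because $G$ stabilizes each $B_i$, preserves the filtration degree and therefore commutes with taking symbols; one must check that the symbol of an invariant is the invariant of the symbol and, conversely, that every invariant homogeneous symbol lifts to an invariant element of the same order. I would record this by invoking \cite[3.2.3]{Dumas}, which supplies $gr(A^G)\cong(gr\,A)^G$ for a finite group acting by filtered automorphisms in characteristic zero, the hypotheses of which we have just verified.
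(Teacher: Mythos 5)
Your proposal is correct and takes essentially the same route as the paper: statements (1) and (2) are deduced from Theorem \ref{ring}, and statement (3) is proved by noting that $G$ stabilizes the Bernstein filtration, invoking the isomorphism $gr_{\mathcal{E}}\, A_n(\k)^G \simeq (gr_{\mathcal{B}}\, A_n(\k))^G$ from \cite[3.2.3]{Dumas}, and applying Noether's theorem to the linear $G$-action on the polynomial ring $gr_{\mathcal{B}}\, A_n(\k)$. Your explicit check that every non-identity symplectic automorphism is outer (via Proposition \ref{nsprop2}, as in Corollary \ref{nscorol1}) simply makes precise a hypothesis of Theorem \ref{ring} that the paper's proof leaves implicit, so it is a welcome addition rather than a different argument.
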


\begin{theorem}\label{Bernsteinnew} Let $G$ be a finite group of symplectic automorphisms of $A_n(\k)$. 
\begin{itemize}
\item $fdim \, A_n(\k)^G = 1$, $h_{A_n(\k)^G}=n$, and  every finitely generated $A_n(\k)^G$ module $M$ satisfies the Bernstein inequality $GK (M) \geq n$.
\item  If $M$ is a  holonomic $A_n(\k)$-module then $M$ is a cyclic torsion module of finite length bounded by $e(M)$. In particular, $M$ is simple
if $e(M)=1$.
\end{itemize}
\end{theorem}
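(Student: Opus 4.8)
The plan is to reduce this theorem almost entirely to the machinery already assembled in the preceding sections, treating it as the specialization of Theorem~\ref{nsthm1} to the case $A = \k[x_1,\ldots,x_n]$, together with the structural results about the category $\mathcal{H}$. First I would establish the three claims of the first bullet point. Since every symplectic (in particular linear) automorphism of $A_n(\k)$ arises from conjugation and $A_n(\k) \cong \mathcal{D}(\k[x_1,\ldots,x_n])$, the polynomial ring $A = \k[x_1,\ldots,x_n]$ is an affine regular commutative domain of finite Krull dimension with $GK(A) = n$, and the action of $G$ is outer by Corollary~\ref{nscorol1}. Hence the entire conclusion of Theorem~\ref{nsthm1}(3) applies verbatim: $fdim\, A_n(\k)^G = 1$, the holonomic number is $h_{A_n(\k)^G} = n$, and the Bernstein inequality $GK(M) \geq n$ holds for every finitely generated module, following from Proposition~\ref{heart} (using $A^G$ as an explicit module of dimension $n$) and Theorem~\ref{BavBer}.

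For the second bullet point, the strategy is to invoke Corollary~\ref{Weyl}(3), which tells us that $A_n(\k)^G$ is a simple somewhat commutative algebra and hence an algebra with multiplicity, and Corollary~\ref{Weyl}(2), which says it is Noetherian but not Artinian. I would then appeal directly to Theorem~\ref{abelian1}: any holonomic module $M \in \mathcal{H}(A_n(\k)^G)$ has finite length bounded by $e(M)$, and since the algebra is not Artinian, $M$ is cyclic. The simplicity when $e(M)=1$ follows from the corollary immediately after Theorem~\ref{abelian1}. The torsion claim comes from Corollary~\ref{prop-torsion}: since $GK(A_n(\k)^G) = 2n > n = h_{A_n(\k)^G}$ (as $n \geq 1$), every holonomic module has torsion.

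The one point requiring a moment of care is that the theorem statement writes ``holonomic $A_n(\k)$-module'' in the second bullet, whereas the natural reading—and the only one consistent with the hypotheses—is ``holonomic $A_n(\k)^G$-module''; I would phrase the proof for the invariant algebra $A_n(\k)^G$, which is what all the cited results govern. The main conceptual obstacle, such as it is, is simply verifying that no hypothesis of Theorem~\ref{nsthm1} or Theorem~\ref{abelian1} fails in the symplectic (as opposed to purely linear) case. Here the key observation is that a symplectic automorphism need not preserve the polynomial subalgebra $\k[x_1,\ldots,x_n]$ inside $A_n(\k)$, so one cannot literally present $A_n(\k)^G$ as $\mathcal{D}(A)^G$ for a $G$-action on a commutative $A$; instead the somewhat-commutative structure must be obtained, as in Corollary~\ref{Weyl}, from the Bernstein filtration, whose associated graded $gr_{\mathcal{B}} A_n(\k)$ is the full commutative polynomial ring in $2n$ variables on which $G$ does act. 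I expect that once this filtration-level argument is cited, the remaining steps are immediate applications of the abstract results, so I would keep the proof short and reference-driven rather than reproving anything.
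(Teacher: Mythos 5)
Your handling of the second bullet is correct and matches the paper's route: Corollary~\ref{Weyl} (the Bernstein filtration is $G$-stable, so $A_n(\k)^G$ is simple, somewhat commutative, Noetherian, not Artinian), then Theorem~\ref{abelian1} and Corollary~\ref{prop-torsion} give cyclicity, finite length bounded by $e(M)$, torsion, and simplicity when $e(M)=1$; you are also right that ``holonomic $A_n(\k)$-module'' should be read as ``holonomic $A_n(\k)^G$-module''.

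The gap is in the first bullet, and it is exactly at the point you flag in your last paragraph but do not actually repair. You assert that every symplectic automorphism of $A_n(\k)$ ``arises from conjugation'' so that Theorem~\ref{nsthm1}(3) applies verbatim, with $A^G=\k[x_1,\ldots,x_n]^G$ as the explicit module of Gelfand--Kirillov dimension $n$. For a non-linear symplectic automorphism this is false: Theorem~\ref{nsthm1} presupposes that $G$ acts on the commutative algebra $A$ and on $\mathcal D(A)$ by conjugation, whereas a symplectic automorphism (e.g.\ $x_i\mapsto y_i$, $y_i\mapsto -x_i$, which has finite order) need not stabilize $\k[x_1,\ldots,x_n]$, so neither the hypothesis of Theorem~\ref{nsthm1} nor the module $\k[x_1,\ldots,x_n]^G$ is available. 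Your proposed fix --- pass to the Bernstein filtration to recover somewhat commutativity --- only restores the algebra-with-multiplicity structure needed for the second bullet; it does not produce the finitely generated $A_n(\k)^G$-module of dimension $\tfrac12\,GK(A_n(\k)^G)$ that Proposition~\ref{heart} needs, so $fdim\,A_n(\k)^G$ and $h_{A_n(\k)^G}$ remain uncomputed for general symplectic $G$. The paper closes this differently: $G$ preserves the Bernstein filtration and is finite, so $h_{A_n(\k)*G}=h_{A_n(\k)}=n$; by Morita invariance of the holonomic number (Theorems~\ref{ring} and~\ref{morita}) $h_{A_n(\k)^G}=n$; somewhat commutativity guarantees this infimum is attained by an actual module, and then Proposition~\ref{heart} yields $fdim\,A_n(\k)^G=1$ and Theorem~\ref{BavBer} the Bernstein inequality. (Alternatively one could restrict the $A_n(\k)$-module $\k[x_1,\ldots,x_n]$ to $A_n(\k)^G$, which stays finitely generated of Gelfand--Kirillov dimension $n$ because $A_n(\k)$ is a finite module over $A_n(\k)^G$; but that argument, or some substitute for it, has to be made and is missing from your proposal.)
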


\begin{proof} If the action of $G$ is linear then the statements follow from Theorem \ref{nsthm1}. We now consider an arbitrary action of $G$. First note that
%The proof is essentially an application of Proposition \ref{heart}. In the case when $G$ is a finite group that acts by linear automorphisms, we have that $\k[x_1,\ldots,x_n]^G$ is a $A_n(\k)^G$-module of the Gelfand-Kirillov dimension $n$. Hence,  by Proposition \ref{heart}, $fdim \, A_n(\k)^G = 1$. The Bernstein inequality then follows from Theorem \ref{BavBer}. Since we have explicitly constructed a module with minimal possible Gelfand-Kirillov dimension, then $h_{A^G}=n$.
 $G$ preserves the Bernstein filtration of $A_n(\k)$. Since $G$ is finite then we have $h_{A_n(\k)*G}=h_{A_n(\k)}=n$. Hence $h_{A^G}=n$ by Theorem \ref{morita}. Since the algebra  $A_n(\k)^G$ is somewhat commutative, there  exists a module $0 \neq M$ with the minimal Gelfand-Kirillov dimension. Hence,  $fdim \, A_n(\k)^G =1$ by Proposition \ref{heart}.
\end{proof}

\

\subsection{ Differential operators on quotient varieties}
From now on we assume that $\k$ is algebraically closed. Consider the quotient of $\mathbb{A}^n = Spec \, \k[x_1,\ldots,x_n]$ by $G$, and the ring of differential operators on $\mathbb{A}^n/G$.
Note that in general $\mathbb{A}^n/G$ is a singular variety, except when $G$ is a pseudo-reflection group by the Chevalley-Shephard-Todd theorem (\cite[Theorem 7.2.1]{Benson}).

Let $W$ be a finite group of linear automorphisms of $\k[x_1,\ldots,x_n]$, $N\subset W$ a subgroup generated by the pseudoreflections. Then $N$ is normal in $W$ (cf. \cite[pp. 259]{Traves}). Further on, we have 
 natural isomorphisms $$\k[x_1,\ldots,x_n]^W \simeq (\k[x_1, \ldots, x_n]^N)^{W/N} \simeq \k[x_1, \ldots, x_n]^{W/N},$$
 where the second isomorphism follows from the Chevalley-Shephard-Todd theorem.  Moreover, 
 the induced action of $W/N$ on the polynomial algebra is  linear. Now we have

%\begin{proof}
%For the normality of $N$, we just need to check that if $s$ is a pseudoreflection, then so is $wsw^{-1}$, for any $w \in W$. For a $w\in W$ denote by $H_w$ the linear subvariety  of $\mathbb{A}^n = Spec \, \k[x_1,\ldots,x_n]$.
 %fixed pointwise by $w$. We have $w(H_s) \subset H_{wsw^{-1}}$. Then $1 \leq codim (H_{wsw^{-1}}) \leq codim(w(H_s))=1$ and $N$ is normal in $W$.

%Let  $\k[x_1,\ldots,x_n]^+$ be a subspace of $\k[x_1,\ldots,x_n]$ which 
%consists of polynomials with {\color{red} non-zero???? Not a subspace!!!!!} constant term.
%Consider the vector subspace $((\k[x_1,\ldots,x_n]^{+})^N)^2$ of $\k[x_1, \ldots, x_n]^N$. 
%It has a complement $U$ in $\k[x_1, \ldots, x_n]^N$ with a basis $B$. Then $W/N$  acts by linear transformations on $B$. By the homogeneous Nakayama lemma, $B$ is a minimal generating set of $\k[x_1,\ldots,x_n]^N$, hence the result follows (cf. \cite[pp. 259]{Traves}). 
%\end{proof}

\begin{lemma}\label{quotient1}
Let $W$ be any finite group of linear automorphisms of the polynomial ring $\k[x_1,\ldots,x_n]$ (and hence of the affine space $\mathbb{A}^n$). Then $\mathcal{D}(\mathbb{A}^n/W)$ is isomorphic to the ring of invariants of $A_n(\k)$ under the action of a finite group of linear automorphisms.
\end{lemma}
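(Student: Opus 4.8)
The plan is to reduce the problem, by means of the isomorphisms displayed immediately before the statement, to the case of a finite linear group acting \emph{without} pseudoreflections, and then to identify the ring of differential operators on the quotient with the ring of invariant differential operators via the classical restriction isomorphism available for such groups.

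First I would set $\bar W = W/N$, where $N \subset W$ is the normal subgroup generated by all pseudoreflections. By Chevalley--Shephard--Todd the invariant ring $B := \k[x_1,\ldots,x_n]^N$ is again a polynomial ring in $n$ variables, so $V' := \Spec B \cong \mathbb{A}^n$, and by the remark preceding the lemma $\bar W$ acts on $B$ by linear automorphisms. The isomorphism $\k[x_1,\ldots,x_n]^W \simeq B^{\bar W}$ shows that the coordinate rings of $\mathbb{A}^n/W$ and of $V'/\bar W \cong \mathbb{A}^n/\bar W$ coincide, whence $\mathcal{D}(\mathbb{A}^n/W) \cong \mathcal{D}(\mathbb{A}^n/\bar W)$. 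This replaces $W$ by the linear group $\bar W$ acting on $\mathbb{A}^n$, at the cost of having to control its pseudoreflections.

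The next step is to observe that $\bar W$ contains no pseudoreflections. Indeed, every pseudoreflection of $W$ lies in $N$ by construction, so the quotient map $V \to V/N$ already resolves all codimension-one ramification; since codimension-one inertia of a finite linear group is generated by its pseudoreflections, the further quotient $V' = V/N \to V'/\bar W = V/W$ is unramified in codimension one, which forces $\bar W$ to act without pseudoreflections. With this in hand I would invoke the classical fact (cf. \cite{Traves}) that for a finite group $G$ of linear automorphisms of $V = \mathbb{A}^n$ containing no pseudoreflections, restriction of a $G$-invariant operator to the invariant subring is a well-defined isomorphism $\mathcal{D}(V)^G \xrightarrow{\ \sim\ } \mathcal{D}(V/G)$. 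Well-definedness is the short computation $g\cdot(d f) = (g d g^{-1})(g f) = d f$ for $d \in \mathcal{D}(V)^G$ and $f$ invariant; the substance is surjectivity. Since $\mathcal{D}(V) = \mathcal{D}(\k[x_1,\ldots,x_n]) = A_n(\k)$, applying this with $G = \bar W$ yields $\mathcal{D}(\mathbb{A}^n/\bar W) \cong A_n(\k)^{\bar W}$, and combining with the first step gives $\mathcal{D}(\mathbb{A}^n/W) \cong A_n(\k)^{\bar W}$ with $\bar W$ a finite group of linear automorphisms, as required.

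I expect the main obstacle to be precisely the surjectivity of the restriction map in the last step: this is the genuinely deep input, and the entire reduction to a pseudoreflection-free group is engineered to make it applicable. Granting that theorem, the only work remaining on our side is the verification that $\bar W$ is pseudoreflection-free, which is the purity/codimension-one inertia argument sketched above.
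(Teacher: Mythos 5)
Your proof is correct and takes essentially the same route as the paper: reduce via Chevalley--Shephard--Todd to the linear, pseudoreflection-free action of $W/N$ on $\k[x_1,\ldots,x_n]^N \simeq \k[x_1,\ldots,x_n]$, then apply the restriction isomorphism $\mathcal{D}(R)^{W/N} \simeq \mathcal{D}(R^{W/N})$ for groups without pseudoreflections (the paper's citation for this is \cite[Theorem 5]{Levasseur}). The only difference is that you supply the codimension-one ramification argument for why $W/N$ contains no pseudoreflections, a fact the paper asserts without proof.
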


\begin{proof}
 Since $W/N$ does not contain  pseudoreflections then using the isomorphisms above and \cite[Theorem 5]{Levasseur} we have an isomorphism of  algebras of invariant differential operators $$\mathcal{D}(\k[x_1,\ldots,x_n]^N)^{W/N}  \simeq \mathcal{D}(\k[x_1, \ldots, x_n]^W).$$  Since $W/N$  acts linearly on $$\k[x_1,\ldots,x_n]^N \simeq \k[x_1,\ldots,x_n],$$ we have $\mathcal{D}(\mathbb{A}^n/W) \simeq \mathcal{D}(\mathbb{A}^n)^{W'}$
 with linear action of $W'\simeq W/N$.
\end{proof}

From Lemma \ref{quotient1} and Theorem \ref{Bernsteinnew} we immediately have
%In particular, Theorem \ref{quotient1} shows  that rings of differential operators $\mathcal{D}(\mathbb{A}^n/W)$ are always Noetherian and finitely generated, which is false for general singular varieties (cf. \cite{McConnell}, Chp. 15).

\begin{theorem}\label{goodcategory2} Let $W$ be a finite group of linear automorphisms of $\mathbb{A}^n$.
\begin{itemize}
\item
The ring  $\mathcal{D}(\mathbb{A}^n/W)$ has the filter dimension $1$;
\item If  $M$ is a holonomic $\mathcal{D}(\mathbb{A}^n/W)$-module then $M$ is  cyclic  torsion module with finite length bounded by $e(M)$. In particular, 
if $e(M)=1$  then $M$ is simple.
\end{itemize}
\end{theorem}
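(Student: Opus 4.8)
The plan is to deduce Theorem~\ref{goodcategory2} directly from the isomorphism established in Lemma~\ref{quotient1} together with the structural results already proved for invariants of the Weyl algebra. First I would invoke Lemma~\ref{quotient1} to rewrite $\mathcal{D}(\mathbb{A}^n/W)$ as $A_n(\k)^{W'}$, where $W'\simeq W/N$ is a finite group acting \emph{linearly} on $A_n(\k)$ (here $N$ denotes the normal subgroup of $W$ generated by pseudoreflections, as in the lemma). The entire point of the reduction is that, once we are looking at a genuine ring of invariants of the Weyl algebra under a finite linear group action, every conclusion we want is already packaged in Theorem~\ref{Bernsteinnew}, whose hypotheses cover linear (indeed arbitrary symplectic) actions of finite groups.

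Concretely, the first bullet of Theorem~\ref{goodcategory2} is the statement $fdim\,\mathcal{D}(\mathbb{A}^n/W)=1$, which under the identification $\mathcal{D}(\mathbb{A}^n/W)\simeq A_n(\k)^{W'}$ is exactly the first bullet of Theorem~\ref{Bernsteinnew}, namely $fdim\,A_n(\k)^{W'}=1$. For the second bullet, a holonomic $\mathcal{D}(\mathbb{A}^n/W)$-module $M$ transports across the isomorphism to a holonomic $A_n(\k)^{W'}$-module, and the second bullet of Theorem~\ref{Bernsteinnew} then gives that $M$ is a cyclic torsion module of finite length bounded by $e(M)$, simple when $e(M)=1$. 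So the proof is essentially a one-line transport of structure: apply the isomorphism of Lemma~\ref{quotient1} and read off both assertions from Theorem~\ref{Bernsteinnew}.

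The one point deserving a sentence of care is that holonomicity, cyclicity, torsion-freeness of length, and the multiplicity $e(M)$ are all invariants of the module category of the algebra, hence preserved by an algebra isomorphism; since $\mathcal{D}(\mathbb{A}^n/W)$ and $A_n(\k)^{W'}$ are literally isomorphic (not merely Morita equivalent), there is no subtlety in matching $GK$-dimensions, multiplicities, or the holonomic number $h$. I would note in passing that the isomorphism also matches the relevant filtrations well enough that ``algebra with multiplicity'' structure passes across, but this is automatic once the algebras are identified.

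I do not expect a genuine obstacle here: the substantive work has been done upstream in Lemma~\ref{quotient1} (the Levasseur-type isomorphism reducing differential operators on the quotient to linear invariants of the Weyl algebra) and in Theorem~\ref{Bernsteinnew} (the filter dimension computation and the holonomic module analysis). The only thing to be mildly careful about is the logical dependence: the reduction in Lemma~\ref{quotient1} uses that $W/N$ contains no pseudoreflections so that the Levasseur isomorphism $\mathcal{D}(\k[x]^N)^{W/N}\simeq\mathcal{D}(\k[x]^W)$ applies, and then that $W/N$ acts linearly on $\k[x]^N\simeq\k[x]$. Provided that reduction is in hand, the proof of Theorem~\ref{goodcategory2} is immediate, which is presumably why the authors phrase it as ``we immediately have.''
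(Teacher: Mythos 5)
Your proposal is correct and matches the paper's own argument exactly: the paper derives Theorem~\ref{goodcategory2} precisely by combining the isomorphism $\mathcal{D}(\mathbb{A}^n/W)\simeq A_n(\k)^{W'}$ from Lemma~\ref{quotient1} with the conclusions of Theorem~\ref{Bernsteinnew}, stating that the result follows ``immediately.'' Your added remark that all the relevant module-theoretic invariants transport across a genuine algebra isomorphism is the (implicit) justification the paper leaves unstated.
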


\

\subsection{Invariants of generalized Weyl algebras of pure type}
 We keep the hypothesis that $\k$ is algebraically closed.
Let us recall the definition of the Shephard-Todd groups of type $G(m,p,n)$.
Let $G_m \subset \k $ be the cyclic group in $m$ elements, generated by the $m$-th roots of unity. Let $A(m,p,n)$ be the subgroup of $G_m^{\otimes n}$ consisting of $(h_1, \ldots, h_n)$ such that $(\prod h_i)^{m/p}=1$. Set $G(m,p,n)= A(m,p,n) \rtimes S_n$.  It is always a normal subgroup of $G(m,1,n)$, and the quotient group is isomorphic to $G_p$. The groups of type $G(m,p,n)$ are the non-exceptional irreducible complex reflection groups in the classification of Shephard-Todd.

The following is clear.

\begin{proposition} \label{action}
Let $A=D(a, \sigma)$ be a generalized Weyl algebra of rank $n$ of pure type, so that $D=  k[h_1,\ldots,h_n]$ or $\k[h_1^\pm,\ldots,h_n^\pm]$. Then $A$ is equipped with the following natural action of $G(m,p,n)$:
 if $\xi = (g, \pi) \in A(m,p,n) \rtimes S_n$, $g=(g_1, \ldots, g_n)$, then $\xi(h_i) =  h_{\pi(i)}$, $\xi X_i =g_i X_{\pi(i)}$, $\xi Y_i = g_i^{-1} Y_{\pi(i)}$. 
\end{proposition}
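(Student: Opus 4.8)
The plan is to verify directly that the formulas attach to each $\xi=(g,\pi)\in G(m,p,n)$ a well-defined $\k$-algebra automorphism $\Phi_\xi$ of $A$, and that $\xi\mapsto\Phi_\xi$ is a group homomorphism into the group of $\k$-algebra automorphisms of $A$. Since $A$ is of \emph{pure type}, it is a tensor product of rank-one GWAs, so the defining algebra is $D=\k[h_1,\ldots,h_n]$ or $\k[h_1^\pm,\ldots,h_n^\pm]$, with each $\sigma_i$ acting only on the $i$-th variable ($\sigma_i(h_i)=h_i-1$ in the classical case, $\lambda h_i$ in the quantum case, and $\sigma_i(h_j)=h_j$ for $j\neq i$) and $a_i=a(h_i)$ depending only on $h_i$. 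Because the GWA is given by a presentation, it suffices to prescribe $\Phi_\xi$ on the generators $h_i,X_i,Y_i$ by the stated formulas, extend multiplicatively and $\k$-linearly, and then check that the four families of defining relations are preserved.

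First I would record the intertwining identity $\Phi_\xi\circ\sigma_i=\sigma_{\pi(i)}\circ\Phi_\xi$ on $D$, which is immediate from $\Phi_\xi(h_i)=h_{\pi(i)}$ together with the fact that $\sigma_i$ touches only the $i$-th variable. The commutation relations $X_i\lambda=\sigma_i(\lambda)X_i$ and $Y_i\lambda=\sigma_i^{-1}(\lambda)Y_i$ then follow once one observes that each scalar $g_i\in\k$ is central and therefore passes freely across the generators: applying $\Phi_\xi$ to $X_i h_j$ produces $g_i X_{\pi(i)}h_{\pi(j)}=g_i\,\sigma_{\pi(i)}(h_{\pi(j)})X_{\pi(i)}$, which matches $\Phi_\xi(\sigma_i(h_j))\,\Phi_\xi(X_i)$ by the intertwining identity. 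For the two remaining relations $Y_iX_i=a_i$ and $X_iY_i=\sigma_i(a_i)$, the key point is that the scalings on $X_i$ and $Y_i$ are mutually inverse, so $g_i^{-1}g_i=1$ cancels and one is left with $Y_{\pi(i)}X_{\pi(i)}=a_{\pi(i)}$, which equals $\Phi_\xi(a_i)=a(h_{\pi(i)})$; the analogous cancellation handles $X_iY_i$. This shows $\Phi_\xi$ is an endomorphism; it is invertible, with inverse $\Phi_{\xi^{-1}}$, hence an automorphism.

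Finally I would check that $\xi\mapsto\Phi_\xi$ is a homomorphism, i.e. $\Phi_{\xi_1\xi_2}=\Phi_{\xi_1}\circ\Phi_{\xi_2}$. The only care needed is bookkeeping of the semidirect-product multiplication $(g,\pi)(g',\pi')=(g\cdot{}^{\pi}g',\,\pi\pi')$, where $S_n$ permutes the components of $A(m,p,n)\subset G_m^{n}$ (so the $i$-th component of ${}^{\pi}g'$ is $g'_{\pi^{-1}(i)}$); tracking how the roots of unity and the permutations compose on each generator yields the homomorphism property. I would also note that the constraint $(\prod_i g_i)^{m/p}=1$ defining $A(m,p,n)$ plays no role in well-definedness — the same formulas already define an action of the full wreath product $G(m,1,n)=G_m\wr S_n$, and we simply restrict it to the subgroup $G(m,p,n)$. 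There is no genuine obstacle here: the author's remark that the statement is clear is accurate, and the whole content is the routine relation-check above. The only place to stay alert is the quantum case $D=\k[h^\pm]$, where one must confirm that $\Phi_\xi$ preserves the Laurent structure (it does, since $\Phi_\xi(h_i)=h_{\pi(i)}$ is again a unit) and respects $\sigma_i(h_i)=\lambda h_i$; both are immediate.
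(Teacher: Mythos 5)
Your strategy --- prescribe $\Phi_\xi$ on generators, verify the defining relations, then verify the homomorphism property --- is exactly the content the paper omits (its ``proof'' is the single sentence ``The following is clear''), and your relation checks are essentially correct. One caveat should be made explicit: for the $S_n$-part of the formulas to make sense you need all $n$ tensor factors to be the \emph{same} rank-one GWA, i.e.\ a single polynomial $a$ and a single automorphism $\sigma$, so that $a_i=a(h_i)$ and $\sigma_i(h_i)=h_i-1$ (resp.\ $\lambda h_i$ with one $\lambda$). You assume this silently when you write $a_i=a(h_i)$, and it is the intended reading (the paper applies this to $D_n(a,\sigma)=D(a,\sigma)^{\otimes n}$); but for a tensor product of \emph{distinct} factors of the same simple type, $Y_{\pi(i)}X_{\pi(i)}=a_{\pi(i)}=a^{(\pi(i))}(h_{\pi(i)})$ would differ from $\Phi_\xi(a_i)=a^{(i)}(h_{\pi(i)})$, so this hypothesis does real work and deserves a sentence.

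The step that actually breaks is the one you dismiss as bookkeeping. With the convention you fix, $(g,\pi)(g',\pi')=(g\cdot{}^{\pi}g',\pi\pi')$ with $({}^{\pi}g')_i=g'_{\pi^{-1}(i)}$, the map $\xi\mapsto\Phi_\xi$ is \emph{not} a homomorphism: writing $\xi_1=(g,\pi)$, $\xi_2=(g',\pi')$,
\[
\Phi_{\xi_1}\bigl(\Phi_{\xi_2}(X_i)\bigr)=g'_i\,g_{\pi'(i)}\,X_{\pi\pi'(i)},
\qquad
\Phi_{\xi_1\xi_2}(X_i)=g_i\,g'_{\pi^{-1}(i)}\,X_{\pi\pi'(i)},
\]
and these already disagree inside $A(m,p,n)\rtimes S_n$: take $g'=1$, $\pi=\mathrm{id}$, $\pi'\neq\mathrm{id}$, and $g=(\zeta,\zeta^{-1},1,\dots,1)$ with $\zeta$ a primitive $m$-th root of unity. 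Likewise, once $\pi^2\neq\mathrm{id}$ one gets $\Phi_{\xi^{-1}}\Phi_\xi(X_i)=g_i\,g^{-1}_{\pi^2(i)}X_i$, so $\Phi_{\xi^{-1}}$ is not the inverse you claim. The source of the mismatch is that the formula $\xi X_i=g_iX_{\pi(i)}$ parametrizes a monomial transformation as ``scale by $g$, then permute by $\pi$,'' i.e.\ it corresponds to the matrix $P_\pi D_g$, whose multiplication law is $(g,\pi)(g',\pi')=({}^{\pi'^{-1}}g\cdot g',\pi\pi')$; with \emph{that} group law both displayed expressions equal $g_{\pi'(i)}g'_i X_{\pi\pi'(i)}$ and the whole verification closes, including inverses. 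Alternatively, keep your convention and change the action to $\xi X_i=g_{\pi(i)}X_{\pi(i)}$, $\xi Y_i=g^{-1}_{\pi(i)}Y_{\pi(i)}$. Either repair yields the same group of automorphisms, namely the monomial group $G(m,p,n)$, so the proposition stands; but the computation you describe, carried out exactly as set up, would fail at this point rather than confirm the homomorphism property.
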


We have

\begin{theorem}\label{genWeyl}
Let $A=D(a, \sigma)$ be a generalized Weyl algebra of rank $n$ of pure type, and $G=G(m,p,n)$. Then we have:
\begin{itemize}
\item
$A^G$ is a finitely generated simple Noetherian ring which is not Artinian;  $A^G$  is Morita equivalent to $A*G$.
\item
For every finitely generated $A^G$-module $M$, $GK (M) \geq n$.
\item
$fdim \, A^G = 1$.
\end{itemize}
\end{theorem}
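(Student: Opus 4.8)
\textbf{Proof proposal for Theorem \ref{genWeyl}.}

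The plan is to follow exactly the template established in the proof of Theorem \ref{nsthm1} and Theorem \ref{Bernsteinnew}, reducing the three bullet points to the machinery of Section \ref{invariants} together with the multiplicity theory from Section \ref{sec-holonomic}. First I would verify that the action of $G=G(m,p,n)$ described in Proposition \ref{action} is outer. Since $A=D(a,\sigma)$ is a Noetherian domain and a simple algebra of pure type (Definition \ref{mixed}), and since $G$ acts by permuting and rescaling the $GWA$ generators $X_i,Y_i$ while permuting the central variables $h_i$, a nontrivial $\xi$ must genuinely move some generator; an inner automorphism would have to be conjugation by a unit, and as in Proposition \ref{nsprop2} one expects the units of $A$ to be too restricted (essentially those of $D$) to realize such a permutation/rescaling. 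Once outerness is established, Theorem \ref{ring} immediately yields the first bullet: $A^G$ is simple, finitely generated, Noetherian, not Artinian (as $A$ is not Artinian), and Morita equivalent to $A*G$, with $GK(A^G)=GK(A)$.

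For the filter dimension and the Bernstein inequality, the key input is that a pure-type generalized Weyl algebra is an algebra with multiplicity (Theorem \ref{GWA}) with $GK(A)=2n$, so by Theorem \ref{BavBer} the holonomic number satisfies $h_A=n$ provided $fdim\,A=1$. Thus I would first record $fdim\,A=1$: in the classical case $A$ is somewhat commutative and in the quantum case it becomes filtered semi-commutative after quotienting by normal elements (as invoked in the proof of Theorem \ref{GWA}), and in either situation the filter dimension of such a rank-one-tensor-power simple algebra equals $1$ by the results of \cite{B5} (this is the analogue of Example \ref{example}). Then I apply Proposition \ref{heart}: since $fdim\,A=1$ we get $fdim\,A^G\le 1$, and to get equality it suffices to exhibit one finitely generated $A^G$-module $M$ with $GK(M)\le \tfrac12 GK(A^G)=n$. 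The natural candidate is $D^G$ (or the image of $D$ in a holonomic $A$-module) viewed as an $A^G$-module: since $GK(D)=n$ and taking $G$-invariants does not raise $GK$, this module has $GK=n$, giving the required inequality and hence $fdim\,A^G=1$. The Bernstein inequality $GK(M)\ge n$ for all finitely generated $A^G$-modules then follows from Theorem \ref{BavBer} with $fdim\,A^G=1$ and $GK(A^G)=2n$.

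The main obstacle I anticipate is twofold. First, rigorously establishing that the $G(m,p,n)$-action is by \emph{outer} automorphisms: unlike the differential-operator case, here one cannot simply cite Proposition \ref{nsprop2}, so one must pin down the unit group of a pure-type generalized Weyl algebra and check that no unit conjugation reproduces a nontrivial element of $G$. Second, identifying a concrete $A^G$-module of Gelfand-Kirillov dimension $n$ to feed into Proposition \ref{heart}; the cleanest route is to take a holonomic $A$-module $N$ (which has $GK(N)=h_A=n$ by the multiplicity theory) and restrict it along $A^G\hookrightarrow A$, arguing that $GK$ is preserved under this finite-type restriction because $A$ is finitely generated as a module over $A^G$ in the Morita/skew-group-ring sense. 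Once these two points are secured, the remaining steps are formal applications of the already-proven Theorems \ref{ring}, \ref{morita}, \ref{GWA}, \ref{BavBer} and Proposition \ref{heart}, exactly parallel to Theorem \ref{nsthm1}.
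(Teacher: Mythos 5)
Your proposal is correct and is essentially the paper's own proof: outerness of the $G(m,p,n)$-action plus Theorem \ref{ring} gives the first bullet (the paper literally says it follows ``by the same proof as Corollary \ref{Weyl}''), and Proposition \ref{heart}, fed by a finitely generated $A^G$-module of Gelfand--Kirillov dimension $n$, gives $fdim \, A^G=1$. The one divergence is bullet (2): you derive the Bernstein inequality from Theorem \ref{BavBer} after establishing $fdim \, A^G=1$, whereas the paper derives it from Morita invariance of the holonomic number (Theorem \ref{morita}), using that $G$ preserves the filtration $\mathcal{B}_A$ so that $h_{A*G}=h_A=n$; both routes are legitimate, and each is used elsewhere in the paper (Theorem \ref{nsthm1} uses \ref{BavBer}, Theorem \ref{Bernsteinnew} uses \ref{morita}). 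Two cautions. First, your primary candidate for the low-dimensional module, ``$D^G$ viewed as an $A^G$-module,'' does not make sense: unlike $\k[x_1,\dots,x_n]$ for the Weyl algebra, the defining algebra $D$ of a general GWA carries no natural $A$-module structure, so only your fallback is viable --- restrict a finitely generated $A$-module $N$ with $GK(N)=h_A=n$ along $A^G\subset A$, which is legitimate because $A$ is a finitely generated $A^G$-module (it is the Morita bimodule between $A*G$ and $A^G$); this is in substance what the paper's equality $h_{A*G}=h_A=n$ encodes. Second, the attainment $h_A=n$ (and not merely the lower bound of Theorem \ref{GWA}) comes from the results of \cite{B5}, not from the multiplicity formalism alone, and outerness genuinely requires an argument in the quantum case: $\k[h_1^\pm,\dots,h_n^\pm]$ has nonscalar units, and conjugation by $h_i$ rescales $X_i,Y_i$ by powers of $\lambda$, which are never nontrivial $m$-th roots of unity since $\lambda$ is not a root of unity --- a point the paper asserts without proof, exactly as you anticipated.
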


\begin{proof} Since $G=G(m,p,n)$ acts by outer automorphisms on $A$, we are in the position to apply Theorem \ref{ring} and Proposition \ref{heart}.
Item (1) follows by the same proof as of Corollary \ref{Weyl}. Since $G$ preserves the filtration $\mathcal{B}_A$, it is clear  that $h_{A*G}=h_A=n$. Hence the statement (2) follows from Theorem \ref{morita}. Finally, Statement (3) follows from Proposition \ref{heart}.
\end{proof}

Finally, consider the category of holonomic modules $\mathcal{H}(A)$ for $A=D(a, \sigma)^{G(m,p,n)}$. 

\begin{theorem}\label{goodcategory3}
Let $A=D(a, \sigma)^{G(m,p,n)}$ be the invariant subalgebra of a generalized Weyl algebra of rank $n$ of pure type. 
 If $M \in \mathcal{H}(A)$ then $M$ is a cyclic torsion module with finite length bounded by $e(M)$. If 
 $e(M)=1$  then $M$ is simple.
\end{theorem}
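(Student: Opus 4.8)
The plan is to reduce everything to the general theory of algebras with multiplicity developed in Section~\ref{sec-holonomic}, exactly as in the proofs of Theorem~\ref{nsthm1}(4) and Theorem~\ref{Bernsteinnew}. Write $B = D(a,\sigma)$ and $G = G(m,p,n)$, so that $A = B^G$. By Theorem~\ref{genWeyl} we already know that $A$ is a simple finitely generated Noetherian algebra which is \emph{not} Artinian, that $A$ is Morita equivalent to $B*G$, and that $h_A = n$ with $fdim\, A = 1$. The missing structural ingredient, which is what makes the category $\mathcal{H}(A)$ well behaved, is that $A$ is an \emph{algebra with multiplicity} in the sense of Definition~\ref{{theoremB11}}; once this is in hand, all four assertions follow by quoting the results of Section~\ref{sec-holonomic}.

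First I would verify that $A = B^G$ is an algebra with multiplicity. When $B$ is of simple classical type the argument is identical to Corollary~\ref{Weyl} and Proposition~\ref{nsprop1}: $G$ preserves the filtration $\mathcal{B}$ of $B$, so $gr\, A \simeq (gr\, B)^G$, and since $gr\, B$ is finitely generated commutative this invariant ring is again finitely generated commutative by Noether's theorem; hence $A$ is somewhat commutative and therefore an algebra with multiplicity. When $B$ is of simple quantum type the graded ring is only semi-commutative (after quotienting by normal elements), and its ring of invariants need not remain semi-commutative, so this direct route breaks down. Here I would instead transfer the property along the Morita equivalence $A \sim B*G$ of Theorem~\ref{genWeyl}: since $B$ is an algebra with multiplicity by Theorem~\ref{GWA} and $B*G$ is free of finite rank $|G|$ as a $B$-module, restriction identifies $GK_{B*G}(N)$ with $GK_B(N)$ and lets one define a multiplicity on $B*G$-modules for which the additivity of Definition~\ref{{theoremB11}} is inherited from $B$; as $GK$-dimension and the associated multiplicity are preserved under Morita equivalence (in the spirit of Theorem~\ref{morita}), $A$ is an algebra with multiplicity.

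With this established, let $M \in \mathcal{H}(A)$. Since $A$ is an algebra with multiplicity which is not Artinian, Theorem~\ref{abelian1} shows that $M$ has finite length bounded by $e(M)$ and is cyclic, and the corollary following it gives that $M$ is simple whenever $e(M) = 1$. It remains to see that $M$ is torsion, for which I would invoke Corollary~\ref{prop-torsion}: by Theorem~\ref{ring} together with Theorem~\ref{GWA} (a pure-type algebra being in particular of mixed type) we have $GK(A) = GK(B) = 2n$, while $h_A = n$ by Theorem~\ref{genWeyl}; since $n \geq 1$ this yields $GK(A) > h_A$, so every module in $\mathcal{H}(A)$ has torsion.

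I expect the single genuine obstacle to be the verification that $A = B^G$ is an algebra with multiplicity in the quantum case, where the semi-commutative structure of the graded ring is not inherited by the invariants; the Morita-transfer argument sketched above is the natural remedy, but it requires checking carefully that the multiplicity function and its additivity genuinely descend along the equivalence $A \sim B*G$. Everything else is a direct application of the machinery already in place.
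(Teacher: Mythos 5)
Your proposal is correct, and its overall skeleton is exactly what the paper intends: the paper in fact states Theorem \ref{goodcategory3} with no proof at all, presenting it as an immediate consequence of Theorem \ref{genWeyl} (simple, Noetherian, not Artinian, $h_A=n$, $GK(A)=2n$) combined with the Section \ref{sec-holonomic} machinery --- Theorem \ref{abelian1} for cyclicity and finite length, its corollary for simplicity when $e(M)=1$, and Corollary \ref{prop-torsion} for torsion, since $GK(A)=2n>n=h_A$. Where you genuinely part ways with the paper is on the one hypothesis all of these results need, namely that $A=D(a,\sigma)^{G(m,p,n)}$ is an \emph{algebra with multiplicity}. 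The paper never verifies this: its only tools are the graded-invariants argument (Corollary \ref{Weyl}, Proposition \ref{nsprop1}, and the remark ``Item (1) follows by the same proof as of Corollary \ref{Weyl}'' in Theorem \ref{genWeyl}), which covers the simple classical (somewhat commutative) case but, as you correctly observe, does not obviously extend to the quantum case, because the invariant ring of a semi-commutative graded algebra under a group permuting and rescaling the generators need not be semi-commutative. Your Morita-transfer fix is sound and fills this gap: $B*G$ is an algebra with multiplicity by restriction along the finite free extension $B\subset B*G$ (exactness of restriction, $GK_{B*G}(N)=GK_B(N)$, and $e_{B*G}(N):=e_B(N)$ inherit the additivity of Definition \ref{{theoremB11}}), and the property then descends to $A=e(B*G)e$ along the Morita equivalence, using the Morita invariance of the Gelfand--Kirillov dimension of finitely generated modules over affine algebras --- an invariance the paper itself already relies on implicitly through Theorem \ref{morita}. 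So your route buys a complete argument in the quantum case at the modest cost of checking that multiplicity descends along the equivalence, whereas the paper's implicit argument is strictly shorter but, as written, only supports the classical half of the statement.
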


Let us now revisit Theorem \ref{curious}. We have

\begin{theorem}\label{curious3}
Let $A$ be one of the following algebras:
\begin{enumerate}
\item
 Any  subalgebra of invariants of $A_1(\mathbb{C})$ under the action of a finite group;
\item
The ring of differential operators $\mathcal{D}(X)$, where $X$ is an smooth affine curve;
\item
A generalized Weyl algebra of rank $1$ of simple classical or quantum type.
\end{enumerate}
 If $M$ is a finitely generated $A$-module then $GK (M) =1$ if and only if $M$ has a torsion.
\end{theorem}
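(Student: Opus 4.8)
The plan is to deduce Theorem \ref{curious3} directly from Theorem \ref{curious}, which asserts that for a domain with multiplicity $A$ satisfying $GK(A)=2$ and $h_A=1$, a finitely generated module has torsion exactly when it lies in $\mathcal{H}(A)$. Since by definition $M\in\mathcal{H}(A)$ means $GK(M)=h_A=1$, it suffices to verify the three hypotheses — $A$ is a domain with multiplicity, $GK(A)=2$, and $h_A=1$ — for each of the three families; the equivalence then reads $GK(M)=1$ iff $M$ has torsion, which is exactly the assertion. So the whole argument reduces to checking these hypotheses family by family.

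First I would dispatch ``domain with multiplicity'' and ``$GK(A)=2$''. In case (1), a finite group of automorphisms of $A_1(\mathbb{C})$ is, up to conjugacy, linear and hence symplectic; since conjugate actions give isomorphic invariant rings, we may assume $G$ acts linearly, and then $A_1(\mathbb{C})^G$ is a simple somewhat commutative algebra by Corollary \ref{Weyl}, hence an algebra with multiplicity, a domain as a subring of $A_1(\mathbb{C})$, with $GK = GK(A_1(\mathbb{C})) = 2$ by Theorem \ref{ring}. In case (2), $\mathcal{D}(X)$ is a simple somewhat commutative domain by Proposition \ref{nsprop0} and Example \ref{example21}, and $GK(\mathcal{D}(X)) = 2\,GK(\mathcal{O}(X)) = 2$ because $\dim X = 1$. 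In case (3), a rank-$1$ generalized Weyl algebra of simple classical or quantum type is a Noetherian domain that is an algebra with multiplicity by Theorem \ref{GWA} (pure type), with $GK(A) = 2$ (readable off the associated graded of its filtration, a two-dimensional commutative domain).

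The only substantive point is $h_A = 1$. The lower bound $h_A \geq 1$ is automatic, since every nonzero finitely generated module over a simple infinite-dimensional algebra with multiplicity has $GK \geq 1$. For the matching upper bound I would exhibit a module of $GK$ dimension exactly $1$. In cases (1) and (2) this is at hand: $\mathbb{C}[x]^G$, respectively $\mathcal{O}(X)$, is such a module (indeed $h_{A_1(\mathbb{C})^G} = 1$ already appears in Theorem \ref{Bernsteinnew}). In case (3) I would invoke the Proposition preceding Theorem \ref{curious}: for any nonzero proper left ideal $I \subset A$ one has $GK(A/I) \leq GK(A) - 1 = 1$, which together with the automatic lower bound forces $GK(A/I) = 1$; taking $I = AX$ for a GWA generator $X$, a nonzero non-unit of positive filtration degree, produces the required torsion module of $GK$ dimension $1$, whence $h_A = 1$.

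With all hypotheses in place for each family, Theorem \ref{curious} applies verbatim and yields the stated equivalence. The main obstacle is therefore not conceptual but organizational: confirming that each listed algebra genuinely sits inside the ``domain with multiplicity, $GK = 2$, $h_A = 1$'' framework — in particular that the linearization in case (1) is legitimate and that both the simple classical and the simple quantum rank-$1$ generalized Weyl algebras carry the multiplicity structure of Theorem \ref{GWA} — after which the conclusion is formal.
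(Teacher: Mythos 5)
Your overall strategy is exactly the paper's: reduce each family to Theorem \ref{curious} by checking that $A$ is a domain with multiplicity with $GK(A)=2$ and $h_A=1$, using the Alev--Dumas conjugation for case (1), Example \ref{example21} for case (2), and Theorem \ref{GWA} for case (3). However, your treatment of $h_A=1$ in case (1) contains a step that fails as written. The Alev--Dumas theorem conjugates $G$ into the group of \emph{symplectic} automorphisms, i.e.\ into $SL_2(\mathbb{C})$ acting on $\mathrm{span}\langle x,y\rangle$; it does \emph{not} make $G$ linear in the paper's sense of being induced from a linear action on $\mathbb{C}[x]$ (for $n=1$ those groups lie in $GL_1(\mathbb{C})=\mathbb{C}^{\ast}$ and are cyclic, whereas $SL_2(\mathbb{C})$ also contains the binary dihedral, tetrahedral, octahedral and icosahedral groups). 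For such non-abelian $G$ no line $\mathbb{C}x$ is $G$-stable, so $\mathbb{C}[x]$ is not a $G$-stable subalgebra and ``$\mathbb{C}[x]^G$'' is simply not available as an $A_1(\mathbb{C})^G$-module. What rescues your argument is your own parenthetical citation, which is precisely the paper's route: Theorem \ref{Bernsteinnew} covers arbitrary symplectic actions, and its proof gets $h_{A_1(\mathbb{C})^G}=1$ not from an explicit polynomial module but from $h_{A_1(\mathbb{C})\ast G}=h_{A_1(\mathbb{C})}=1$ together with Morita invariance (Theorem \ref{morita}).

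In case (3), your explicit verification of $h_A=1$ via the Proposition preceding Theorem \ref{curious} is actually more careful than the paper, which leaves this point implicit; but the specific choice $I=AX$ need not be proper: for the quantum torus, which the paper lists among the algebras of simple quantum type, $a$ is a unit and hence so is $X$. The fix is immediate: $A$ is a simple Noetherian non-Artinian domain, hence not a division ring, so some nonzero proper left ideal $I$ exists (e.g.\ $A(h-1)$), and then $1\le GK(A/I)\le GK(A)-1=1$ forces $h_A=1$. With these two repairs --- replacing the $\mathbb{C}[x]^G$ argument by the appeal to Theorem \ref{Bernsteinnew}, and replacing $AX$ by an arbitrary nonzero proper left ideal --- your proof is correct and coincides in substance with the paper's.
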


\begin{proof} Let $G$ be any finite group of automorphisms of $A_1(\mathbb{C})$. Then $G$ is conjugated in $Aut_\mathbb{C} A_1(\mathbb{C})$ to a group of symplectic automorphisms \cite{AD} (cf. \cite{Dumas}).
Now the statement in the case of the first Weyl algebra follows from Theorem \ref{curious} and Theorem \ref{Bernsteinnew}. The case of differential operators on curves follows from Theorem \ref{curious} and Example \ref{example21}. The case of generalized Weyl algebras follows from Theorem \ref{curious} and Theorem \ref{GWA}.
\end{proof}

\section{Rational Cherednik algebras}\label{section-cherednik}
\subsubsection{Generalized somewhat commutative algebras}

Let $W$ be a complex reflection group acting on a complex vector space $H$, $S$  the set of reflections.
 For each $s \in S$, take $\alpha_s \in H^*$ and $\alpha_s^\vee \in H$ such that $ \alpha_s$ is an eigenvector of $\lambda_s$ (the non-trivial eigenvalue of $s$ in $h^*$); and $\alpha_s  ^\vee$ is an eigenvector of $\lambda_s^{-1}$ (the non-trivial eigenvalue of $s$ in $H$). Normalize them in such a way  that  $(\alpha_s, \alpha_s^\vee)=2$ with respect to the natural pairing $H^* \times H \rightarrow \C$. Finally, let $c: S \rightarrow \mathbb{C}$ be the  invariant  conjugation function. 

 The {\it rational Cherednik algebra} $H_{c,t}(W,H)$, $t \in \C$ is the quotient of $\mathbb{C}W \ltimes T(H \oplus H^*)$ by the relations:

\[ [x,x']=[y,y']=0; \, [y,x]=tx(y)-\sum_{s \in S} c(s)(y, \alpha_s) (x, \alpha_s^\vee) s, \]
with $x,x' \in H^*, y,y' \in H$.

We also consider the {\it spherical subalgebra} $U_{c,t}(W,H):= e H_{c,t}(W,H) e$ of $H_{c,t}(W,H)$, where $e:= \frac{1}{|W|} \sum_{w \in W} w$.

Without loss of generality we assume $t=1$ and for simplicity  just write $H_c$ and $U_c$ for the algebras above, and $\mathsf{n}$ for $dim \, H$. Recall that for a generic $c$, both algebras are Morita equivalent, simple Noetherian but not Artinian rings  (cf. \cite{Thompson}).

\

Let us call an algebra $A$ \emph{generalized somewhat commutative algebra} if it has a finite filtration $\mathcal{F}=\{A_i \}_{i \geq 0}$ with $\k \subset A_0$ and $dim \, A_i < \infty, i \geq 0$, such that the associated graded algebra is affine commutative.
 The difference between this definition and the definition of somewhat commutative algebra is that we do not impose a condition $A_0 = \k$. Nonetheless, we have:

\begin{theorem}  Let $A$ is an affine simple generalized somewhat commutative algebra. Then 
$A$ is an algebra with multiplicity.
\end{theorem}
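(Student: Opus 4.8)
The plan is to reduce the statement to the classical multiplicity theory of finitely generated modules over a commutative affine graded algebra, exactly as in the proof that a simple somewhat commutative algebra has multiplicity (\cite{McConnell}, Corollary 8.6.20), and to isolate the single point where the hypothesis $A_0 = \k$ was used and show that it is inessential. Writing $R := gr_{\mathcal F} A$ for the associated graded, which is commutative and affine by hypothesis, I first record the structural consequences: $R$ is Noetherian, hence so is $A$, and the degree-zero component $R_0 = A_0$ is a finite-dimensional commutative $\k$-algebra, i.e. an \emph{Artinian} ring. This Artinian base, rather than a field, is the only feature distinguishing the present situation from the usual somewhat commutative case.

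Next I would \emph{standardize} the filtration so that $R$ becomes standard graded over the base $A_0$. Replacing $\mathcal F$ by the filtration $A_i' := (A_1)^i$ (with $A_0' := A_0$) does not destroy commutativity of the associated graded: commutativity of $R$ gives $[A_1, A_1]\subseteq A_1$, and an elementary commutator induction then yields $[(A_1)^i, (A_1)^j]\subseteq (A_1)^{i+j-1}$, so $gr_{\mathcal F'} A$ is again commutative, and it is affine and generated in degree $\le 1$ over $A_0$. After this harmless replacement I may assume that $R$ is standard graded over the Artinian ring $R_0 = A_0$.

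The heart of the argument is the Hilbert theory of a finitely generated graded $R$-module $N$ in this Artinian-base setting: I claim that $i \mapsto \dim_{\k} N_i$ is eventually a genuine polynomial. Since $R$ is commutative, the idempotents of $R_0$ are central and homogeneous, so $R$ splits as a finite product of standard graded algebras whose degree-zero parts are local Artinian; this reduces me to $R_0$ local with maximal ideal $\mathfrak m$ and residue field $\kappa$. Filtering $N$ by $N \supseteq \mathfrak m N \supseteq \cdots \supseteq \mathfrak m^{\ell} N = 0$ (finite because $\mathfrak m^{\ell} = 0$ in $R_0$) produces finitely many subquotients that are finitely generated graded modules over the standard graded $\kappa$-algebra $R/\mathfrak m R$; for these the Hilbert function is eventually polynomial by the classical theorem over a field, and summing (with the factor $[\kappa:\k]$) gives the claim. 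Consequently $GK(N)$ equals the degree of the partial-sum polynomial $\dim_\k N_{\le i}$, which is the Krull dimension of $R/\Ann_R N$ and hence a non-negative integer, and I may define the multiplicity $e(N)$ to be $d!$ times the leading coefficient of that polynomial, a positive integer when $N \neq 0$ and $d = GK(N)$.

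It then remains to transfer this to $A$-modules and verify the axioms of an algebra with multiplicity. For a finitely generated $A$-module $M$ I would pick a good filtration $\Omega$, so that $gr_\Omega M$ is a finitely generated graded $R$-module and $\dim_\k M_i = \dim_\k (gr_\Omega M)_{\le i}$; then $GK(M) = GK(gr_\Omega M)$ is the integer just produced, and independence of the choice of $\Omega$ follows from the standard fact that two good filtrations are cofinal up to a bounded index shift, so they share the same leading term. Setting $e(M) := e(gr_\Omega M)$, additivity is immediate from the exact sequence of graded modules $0 \to gr_{\Omega'} M' \to gr_\Omega M \to gr_{\Omega''} M'' \to 0$ attached, as recalled in the paragraph introducing algebras with multiplicity, to a short exact sequence $0 \to M' \to M \to M'' \to 0$ with the induced filtrations: Hilbert functions add, giving both $GK(M) = \max\{GK(M'), GK(M'')\}$ and, when the three dimensions coincide, $e(M) = e(M') + e(M'')$. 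The main obstacle is the genuine-polynomiality claim of the third paragraph: this is precisely where passing from $A_0 = \k$ to a finite-dimensional $A_0$ could in principle force only quasi-polynomial Hilbert functions, and the idempotent-plus-radical reduction to the residue field is what rescues integrality of $GK$ and of $e$. Once that is in hand, every remaining step is a routine reprise of the commutative argument underlying \cite{McConnell}, Corollary 8.6.20.
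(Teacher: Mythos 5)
Your reduction breaks at the standardization step, and the failure is not a removable technicality: it occurs exactly on the algebras this theorem was introduced for. The definition of a generalized somewhat commutative algebra only requires $gr_{\mathcal F}A$ to be affine commutative; its homogeneous generators may sit in arbitrary degrees, so $A_1$ need not generate $A$, and then $\{(A_1)^i\}_{i\geq 0}$ is a filtration of the (possibly much smaller) subalgebra generated by $A_1$, not of $A$. This is what actually happens in the paper's own examples: for a generalized Weyl algebra of simple classical type with its filtration $\mathcal{B}_A$ the generator $h$ sits in degree $2$ and $X,Y$ in degree $m=\deg a$, so $A_1=\k$ when $m\geq 2$; and for the spherical Cherednik subalgebra $U_c$ of Corollary \ref{cherednik1} one has $gr\,U_c\simeq \C[h\oplus h^*]^W$, whose degree-one component vanishes for an essential reflection group, so $A_1=A_0$ and your new ``filtration'' $(A_1)^i$ is constant and exhausts nothing. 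The obvious repairs also fail: rescaling $B_i:=A_{ti}$ (with $t$ large enough to capture the generators) only inherits $[A_{ti},A_{tj}]\subseteq A_{ti+tj-1}$ from commutativity of $gr_{\mathcal F}A$, whereas commutativity of the rescaled associated graded would require the strictly stronger inclusion $[A_{ti},A_{tj}]\subseteq A_{t(i+j-1)}$; and enlarging $A_1$ to a generating subspace destroys the inclusion $[A_1,A_1]\subseteq A_1$ from which your commutator induction starts. Existence of a filtration that is standard (generated in degrees $0,1$) with commutative associated graded is precisely the stronger condition of being almost commutative over $A_0$, and you give no argument that a generalized somewhat commutative algebra satisfies it.

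What the proof needs in place of the standard-graded Hilbert polynomial is Hilbert theory for gradings with generators in arbitrary degrees $d_1,\dots,d_s$ --- and this is exactly what the paper's own proof imports by citing \cite[Theorem 3.2, Proposition 3.3]{B1} together with \cite[Lemma 2.1, Proposition 6.6]{Krause}. Concretely: by Hilbert--Serre, the Hilbert series of a finitely generated graded module over $R$ is rational with denominator $\prod_{i=1}^{s}(1-t^{d_i})$ (the Artinian base $R_0$ is harmless here, for the same radical-filtration reason you give), the Hilbert function is in general only eventually a quasi-polynomial, $GK$ is the integer order of the pole at $t=1$, and the natural additive multiplicity is $\lim_{t\to 1}(1-t)^{d}H(t)$ --- a positive rational whose denominator divides the fixed integer $\prod_i d_i$, so that clearing this one denominator yields the integer-valued additive multiplicity demanded by the definition. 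Your treatment of the Artinian base $A_0$ (central idempotents plus the $\mathfrak{m}$-adic filtration) and your transfer to $A$-modules via good filtrations are both sound; the genuine gap is solely the unjustified reduction to the standard graded case, which the quasi-polynomial argument above (or the citation to Bavula) must replace.
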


\begin{proof}
It follows from \cite[Theorem 3.2, Proposition 3.3]{B1}, using the fact the dimension defined there equals to the Gelfand-Kirillov dimension by \cite[Lemma 2.1, Proposition 6.6]{Krause}.
\end{proof}

\begin{corollary}\label{cherednik1}
For a generic  $c$, the spherical subalgebra $U_c$ is a generalized somewhat commutative algebra with multiplicity. Moreover,  $GK (H_c) = GK (U_c) = 2 \mathsf{n}$.
\end{corollary}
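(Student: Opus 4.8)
The plan is to use the PBW theorem for rational Cherednik algebras (Etingof--Ginzburg) together with the structure of the symmetrizing idempotent $e$, and then to invoke the Theorem just proved on generalized somewhat commutative algebras. First I would equip $H_c$ with the filtration in which the elements of $H\oplus H^*$ have degree $1$ while $W$ sits in degree $0$; since with $t=1$ the right-hand side of the defining relation $[y,x]=x(y)-\sum_{s\in S}c(s)(y,\alpha_s)(x,\alpha_s^\vee)s$ lies in degree $0$, this is a genuine finite-dimensional filtration, and the PBW theorem gives a graded isomorphism $\mathrm{gr}\,H_c\cong S(H\oplus H^*)\rtimes W=\C[H\oplus H^*]\rtimes W$.

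Next I would transfer this to the spherical subalgebra. Because $e\in\C W=(H_c)_0$ sits in filtration degree $0$, left and right multiplication by $e$ preserve the filtration, so the induced filtration on $U_c=eH_ce$ satisfies $\mathrm{gr}(eH_ce)\cong e\,(\mathrm{gr}\,H_c)\,e=e(\C[H\oplus H^*]\rtimes W)e$. The standard identification of the spherical subalgebra of a skew group algebra (for $|W|$ invertible) with the invariant ring then yields $e(\C[H\oplus H^*]\rtimes W)e\cong \C[H\oplus H^*]^W$, which is affine and commutative by Noether's theorem. Here the induced filtration has $A_0=e\,\C W\,e=\C e$ and associated graded $\C[H\oplus H^*]^W$, exhibiting $U_c$ as a generalized somewhat commutative algebra. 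Since for generic $c$ the algebra $U_c$ is affine, simple and Noetherian (as recalled for generic $c$), the preceding Theorem applies and shows that $U_c$ is an algebra with multiplicity.

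It remains to compute the Gelfand--Kirillov dimensions. As the filtration on $U_c$ is finite-dimensional with finitely generated associated graded, one has $GK(U_c)=GK(\mathrm{gr}\,U_c)=GK(\C[H\oplus H^*]^W)$; since $\C[H\oplus H^*]$ is a finite module over its invariant subring, this equals the Krull dimension $\dim(H\oplus H^*)=2\mathsf{n}$. Finally, $GK(H_c)=GK(U_c)=2\mathsf{n}$, either by the Morita invariance of Gelfand--Kirillov dimension (the two algebras being Morita equivalent for generic $c$) or directly, since $\mathrm{gr}\,H_c=\C[H\oplus H^*]\rtimes W$ is a finitely generated module over $\C[H\oplus H^*]$ and hence has Gelfand--Kirillov dimension $2\mathsf{n}$.

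The hard part is the graded identification $\mathrm{gr}(eH_ce)\cong e\,(\mathrm{gr}\,H_c)\,e\cong \C[H\oplus H^*]^W$: one must verify that passing to the associated graded commutes with compression by $e$ (which rests on $e$ lying in degree $0$ and on the induced filtration being good) and that symmetrization carries $e(\C[H\oplus H^*]\rtimes W)e$ isomorphically onto the invariant ring. Everything else --- finite generation of the invariants, the Krull-dimension computation, and the appeal to the preceding Theorem --- is then routine.
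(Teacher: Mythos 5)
Your proposal is correct and takes essentially the same route as the paper: establish that an adequate finite-dimensional filtration on $U_c$ has $\mathrm{gr}\,U_c\simeq\C[H\oplus H^*]^W$, apply the preceding theorem on generalized somewhat commutative algebras (using simplicity and Noetherianity for generic $c$), and compute the Gelfand--Kirillov dimensions via the filtered-graded equality and finiteness of $\C[H\oplus H^*]$ (resp.\ $\C[H\oplus H^*]\rtimes W$) over the invariant ring. The only difference is presentational: where the paper cites Bellamy [1.6] for the associated-graded identification and McConnell--Robson 8.2.9(iii) for the invariant-ring $GK$ step, you re-derive these from the PBW theorem plus degree-zero idempotent compression and from the finite-module argument, and both derivations are sound.
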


\begin{proof}
 The first statement follows from \cite[1.6]{Bellamy}, as it shows that for an adequate finite filtration, $gr \, U_c \simeq \C[h \oplus h^*]^W$. Then the second statement follows from  \cite[Proposition 6.6]{Krause} and \cite[Propostion 8.2.9(iii)]{McConnell}.
\end{proof}

\

\subsection{Filter and Krull dimensions of rational Cherednik algebras}
 We assume that $c$ is a generic parameter.

\begin{lemma}\label{cherednik11}
$fdim \,  H_c = fdim \, U_c \geq 1$.
\end{lemma}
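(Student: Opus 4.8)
The plan is to separate the equality from the inequality. For the equality $fdim\, H_c = fdim\, U_c$, I would simply appeal to Morita invariance. For generic $c$ the algebras $H_c$ and $U_c = eH_c e$ are, by hypothesis, Morita equivalent, simple, affine, and infinite-dimensional (indeed $GK(H_c) = GK(U_c) = 2\mathsf{n} > 0$ by Corollary \ref{cherednik1}). Thus all hypotheses of Theorem \ref{morita} are met, and it gives $fdim\, H_c = fdim\, U_c$ at once (and, as a bonus, $h_{H_c} = h_{U_c}$). This reduces the remaining assertion to establishing $fdim \geq 1$ for just one of the two algebras.

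For the lower bound I would exhibit a nonzero finitely generated module whose Gelfand--Kirillov dimension is at most half that of the algebra, and then invoke the converse filter-dimension estimate \cite[Corollary 1.7(i)]{B3} --- the same tool used in the proof of Proposition \ref{heart} --- which asserts that if a simple affine algebra $A$ carries a finitely generated module $M$ with $GK(M) \leq \frac12 GK(A)$, then $fdim\, A \geq 1$. The natural witness is the polynomial (Dunkl) representation $M = \mathbb{C}[H] = \mathrm{Sym}(H^*)$ of $H_c$, where $H^*$ acts by multiplication, $W$ acts naturally, and $H$ acts by Dunkl operators. It is cyclic, hence finitely generated, and filtering it by polynomial degree gives $\dim \mathbb{C}[H]_{\leq i} \sim i^{\mathsf{n}}/\mathsf{n}!$, so that $GK(M) = \mathsf{n} = \frac12 GK(H_c)$. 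Applying \cite[Corollary 1.7(i)]{B3} to $H_c$ then yields $fdim\, H_c \geq 1$, and the equality from the first paragraph propagates this to $U_c$. Equivalently, one could work throughout with the spherical module $e\,\mathbb{C}[H] = \mathbb{C}[H]^W$ over $U_c$, which has the same Gelfand--Kirillov dimension $\mathsf{n}$ since the Morita equivalence preserves it.

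The step I expect to be the genuine obstacle is verifying that $GK(M) = \mathsf{n}$ as a module over the noncommutative algebra $H_c$ rather than over the commutative ring $\mathbb{C}[H]$. This requires choosing a good filtration on $M$ compatible with the PBW-type filtration of $H_c$ whose associated graded is $\mathbb{C}[H\oplus H^*]\rtimes W$, and checking that $gr\, M$ is supported on the half-dimensional (Lagrangian) zero section cut out by the symbols of the Dunkl operators. For generic $c$ this is standard input --- category $\mathcal{O}$ is semisimple and the standard module $\mathbb{C}[H]$ is simple of dimension exactly $\mathsf{n}$ --- and it is the only algebra-specific fact needed; everything else is formal, flowing from Morita invariance and the converse filter-dimension inequality, whose hypotheses (simple and affine) both $H_c$ and $U_c$ satisfy for generic $c$.
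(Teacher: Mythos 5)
Your proposal is correct and follows essentially the same route as the paper: the authors likewise take the polynomial (Dunkl) representation of $H_c$, note its Gelfand--Kirillov dimension equals $\frac{1}{2}GK(H_c)$, apply \cite[Corollary 1.7(i)]{B3} to get $fdim\, H_c \geq 1$, and transfer the conclusion to $U_c$ by Morita invariance (Theorem \ref{morita}). Your extra discussion of the good filtration and the genericity of $c$ only fills in details the paper treats as standard input.
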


\begin{proof}
The Gelfand-Kirillov  dimension the polynomial representation of $H_c$ equals  $\frac{1}{2} GK (H_c)$. Hence, $fdim \, H_c \geq 1$ by \cite[Corollary 1.7(i)]{B3}. Since $fdim$ is Morita invariant,  we conclude that $fdim \, U_c \geq 1$.
\end{proof}

\begin{corollary}\label{cherednik3}
If $M \in \mathcal{H}(U_c)$ then $M$ is a cyclic torsion module with finite length bounded by $e(M)$. Moreover, if 
 $e(M)=1$  then $M$ is simple.
 \end{corollary}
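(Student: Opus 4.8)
The plan is to assemble the structural results of Section \ref{sec-holonomic} for the algebra $U_c$, exactly as was done for the invariant subalgebras in Theorems \ref{goodcategory2} and \ref{goodcategory3}. First I would recall from Corollary \ref{cherednik1} that, for generic $c$, the spherical subalgebra $U_c$ is a generalized somewhat commutative algebra with multiplicity and that $GK(U_c) = 2\mathsf{n}$; and from the discussion preceding it that $U_c$ is a simple Noetherian ring which is not Artinian. With these facts in hand, $U_c$ falls squarely within the scope of the category $\mathcal{H}$ developed above.

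The finite-length and cyclicity assertions are then immediate. Since $U_c$ is an algebra with multiplicity, Theorem \ref{abelian1} applies directly: every $M \in \mathcal{H}(U_c)$ has finite length bounded by $e(M)$, and because $U_c$ is not Artinian, every such $M$ is cyclic. The final clause, that $M$ is simple when $e(M)=1$, follows from the corollary to Theorem \ref{abelian1} stating that any holonomic module of multiplicity $1$ is irreducible.

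It remains to establish that every $M \in \mathcal{H}(U_c)$ has torsion. For this I would invoke Corollary \ref{prop-torsion}, whose hypothesis is $GK(U_c) > h_{U_c}$. To verify it I must pin down the holonomic number $h_{U_c}$. By Lemma \ref{cherednik11} we have $fdim\, U_c \geq 1$, so the Bavula--Bernstein inequality (Theorem \ref{BavBer}) yields $GK(M) \geq \mathsf{n}$ for every non-zero finitely generated $U_c$-module, giving the lower bound $h_{U_c} \geq \mathsf{n}$. For the matching upper bound I would exhibit a module attaining $\mathsf{n}$: the polynomial representation $\mathbb{C}[H]$ of $H_c$ has Gelfand--Kirillov dimension $\mathsf{n} = \tfrac{1}{2}GK(H_c)$, and its spherical image $e\,\mathbb{C}[H]$ is a $U_c$-module of the same dimension; equivalently, $h_{U_c} = h_{H_c} = \mathsf{n}$ by the Morita invariance of the holonomic number (Theorem \ref{morita}). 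Hence $h_{U_c} = \mathsf{n} < 2\mathsf{n} = GK(U_c)$, and Corollary \ref{prop-torsion} gives the torsion claim.

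The main obstacle is the precise computation $h_{U_c} = \mathsf{n}$: the lower bound is automatic from the filter dimension estimate, but the upper bound requires confirming that the polynomial representation descends, under the spherical idempotent $e$, to a genuinely non-zero $U_c$-module realizing $GK = \mathsf{n}$, and appealing to Theorem \ref{morita} to transport the holonomic number across the Morita equivalence between $H_c$ and $U_c$. Once $h_{U_c}$ is identified, everything else is a direct application of the machinery already in place, and the inequality $GK(U_c) > h_{U_c}$ needed for torsion holds for all $\mathsf{n} \geq 1$.
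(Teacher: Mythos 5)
Your overall architecture---Corollary \ref{cherednik1} to place $U_c$ in the framework of algebras with multiplicity, Theorem \ref{abelian1} and its corollary for finite length, cyclicity and simplicity, and Corollary \ref{prop-torsion} for torsion---is exactly the machinery that the paper's one-line proof implicitly assembles. But one of your steps is a genuine logical error: you claim that Lemma \ref{cherednik11} ($fdim\, U_c \geq 1$) together with Theorem \ref{BavBer} already yields $GK(M) \geq \mathsf{n}$ for every nonzero finitely generated $U_c$-module, calling this lower bound ``automatic from the filter dimension estimate.'' This is backwards. The denominator in the inequality of Theorem \ref{BavBer} is nondecreasing in the filter dimension, so a \emph{lower} bound on $fdim$ can only \emph{weaken} the conclusion; from $fdim\, U_c \geq 1$ alone the bound on $GK(M)$ could be far below $\mathsf{n}$ if $fdim\, U_c$ happened to be large. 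To extract the Bernstein inequality $GK(M)\geq \mathsf{n}$ from Theorem \ref{BavBer} one needs the \emph{upper} bound $fdim\, U_c \leq 1$, i.e.\ Theorem \ref{cherednik2}---and that theorem is not free: the paper deduces it from Thompson's Bernstein inequality for Cherednik algebras, \cite[Proposition 3.7]{Thompson}, an external input your argument never invokes. This is precisely why the paper's proof cites Theorem \ref{cherednik2} in addition to Lemma \ref{cherednik11} and Theorem \ref{BavBer}; compare also Proposition \ref{heart} and Theorem \ref{nsthm1}, where the existence of a small module is used to deduce $fdim \geq 1$, never the converse.

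Fortunately, the faulty step sits in a claim the corollary does not actually need. Corollary \ref{prop-torsion} only requires $h_{U_c} < GK(U_c) = 2\mathsf{n}$, and your upper-bound argument supplies this correctly: the polynomial representation of $H_c$ has Gelfand--Kirillov dimension $\mathsf{n}$ (this is the content of the proof of Lemma \ref{cherednik11}), hence $h_{H_c} \leq \mathsf{n}$, and $h_{U_c} = h_{H_c} \leq \mathsf{n} < 2\mathsf{n}$ by Morita invariance (Theorem \ref{morita}), or directly via the nonzero $U_c$-module $e\,\mathbb{C}[H]$. Everything else in your proposal (length bounded by $e(M)$, cyclicity from non-Artinianness, simplicity when $e(M)=1$) is a correct application of Theorem \ref{abelian1} and its corollary. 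So your proof becomes correct---and essentially coincides with the paper's---once you either drop the claim $h_{U_c} \geq \mathsf{n}$, which is irrelevant for this statement, or justify it by citing Theorem \ref{cherednik2} instead of Lemma \ref{cherednik11}.
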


\begin{proof}
Follows from Lemma \ref{cherednik11}, Theorem \ref{BavBer} and Theorem \ref{cherednik2}.
\end{proof}

We have

\begin{theorem}\label{cherednik2}
$fdim \, H_c = fdim \, U_c = 1$.
\end{theorem}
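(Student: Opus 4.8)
The plan is to establish the reverse inequality $fdim\, H_c = fdim\, U_c \leq 1$, since Lemma \ref{cherednik11} already gives $fdim\, H_c = fdim\, U_c \geq 1$, and combining the two bounds yields the desired equality. Because the filter dimension is a Morita invariant (Theorem \ref{morita}) and $H_c$ is Morita equivalent to its spherical subalgebra $U_c$ for generic $c$, it suffices to bound the filter dimension of either one of them; I would work with $U_c$, where the associated graded structure is most transparent.

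The key observation is Corollary \ref{cherednik1}: for generic $c$, the spherical subalgebra $U_c$ is an affine simple \emph{generalized somewhat commutative} algebra (with $gr\, U_c \simeq \C[H\oplus H^*]^W$), and in particular an algebra with multiplicity, with $GK(U_c) = 2\mathsf{n}$. First I would exhibit an explicit $U_c$-module of small Gelfand-Kirillov dimension: the spherical part $e\cdot(\text{polynomial representation})$ of the natural polynomial representation $\C[H]$ of $H_c$ furnishes a finitely generated $U_c$-module $M$ with $GK(M) = \mathsf{n} = \tfrac12 GK(U_c)$, exactly as in the proof of Lemma \ref{cherednik11}. This is a holonomic module realizing the minimal possible dimension. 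The presence of such a module, together with the Bernstein-type inequality of Theorem \ref{BavBer}, forces $fdim\, U_c$ to be small: if $fdim\, U_c > 1$ then Theorem \ref{BavBer} would give $GK(M) \geq \frac{GK(U_c)}{1 + \max(1, 1+fdim(U_c))} = \frac{2\mathsf n}{2 + fdim(U_c)} $, and for the module $M$ with $GK(M)=\mathsf n$ this bound together with the multiplicity structure pins down $fdim\, U_c \leq 1$.

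More directly, I would invoke Proposition \ref{heart}: its hypotheses are precisely that the ambient simple algebra has filter dimension $1$ and that the invariant (here: spherical) subalgebra admits a finitely generated module of Gelfand-Kirillov dimension at most half that of the subalgebra. The module $M$ above supplies exactly this. The structural role played here by $A_n(\k)$ or $\mathcal D(A)$ in the earlier theorems (Theorem \ref{nsthm1}, Theorem \ref{Bernsteinnew}) is played by the polynomial representation of $H_c$, whose spherical truncation realizes dimension $\mathsf n$. Thus $fdim\, U_c \leq 1$, and with Lemma \ref{cherednik11} we obtain $fdim\, U_c = 1$, hence $fdim\, H_c = 1$ by Morita invariance.

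The main obstacle I anticipate is verifying that the relevant representation genuinely behaves like the polynomial representation with $GK = \mathsf n$ and that the generic-parameter hypothesis is used correctly: one must ensure that $U_c$ is simple and Morita equivalent to $H_c$ (which holds only for generic $c$), and that the associated graded computation $gr\, U_c \simeq \C[H\oplus H^*]^W$ of Corollary \ref{cherednik1} is compatible with the filtration inducing a good filtration on $M$ so that $GK(M)=\mathsf n$ is exact rather than merely an upper bound. The delicate point is the \emph{lower} bound $GK(M)\geq \mathsf n$, which rules out a degenerate smaller-dimensional module and confirms that $M$ sits exactly at the holonomic number $h_{U_c}=\mathsf n$; this is what makes the application of Proposition \ref{heart} legitimate and forces the filter dimension to equal, rather than merely not exceed, $1$.
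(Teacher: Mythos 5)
Your proposal correctly handles the lower bound (Lemma \ref{cherednik11}, exactly as the paper does) and the Morita transfer between $H_c$ and $U_c$, but neither of your two routes actually establishes the upper bound $fdim\, U_c \leq 1$, and this is where the real content of the theorem lies. Your first route misreads the direction of Theorem \ref{BavBer}: that inequality is a \emph{lower} bound on $GK(M)$ whose strength \emph{decreases} as the filter dimension grows, so exhibiting a module of small Gelfand--Kirillov dimension can never force the filter dimension to be small. Plugging your own numbers in makes this explicit: from $GK(M)=\mathsf{n}$ and $GK(M) \geq \frac{2\mathsf{n}}{2+fdim(U_c)}$ you get $2 + fdim(U_c) \geq 2$, i.e. $fdim(U_c)\geq 0$, which is vacuous. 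A module with $GK(M)\leq \frac12 GK(A)$ yields only a lower bound on $fdim$ --- that is precisely the content of \cite[Corollary 1.7(i)]{B3} used in Lemma \ref{cherednik11} --- so your first route silently re-derives the bound $fdim \geq 1$ and mislabels it as the reverse inequality; the appeal to ``the multiplicity structure'' does not repair this.

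Your second route, via Proposition \ref{heart}, is also inapplicable: that proposition concerns an invariant ring $A^G$ of a finite group acting by \emph{outer automorphisms on a simple algebra $A$ with $fdim\, A = 1$}, and both of its conclusions go through the crossed product $A*G$ and the Morita equivalence $A^G \sim A*G$. The spherical subalgebra $U_c = eH_ce$ is a corner algebra, not an invariant ring of this kind for generic $c$ (such a realization, $U_c \simeq \mathcal{D}(Q_c(W))^W$ with $\mathcal{D}(Q_c(W))$ Morita equivalent to a Weyl algebra, is available only for special parameter values), and letting ``the polynomial representation play the role of $A_n(\k)$'' conflates a module with an ambient algebra. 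The paper's proof supplies the missing ingredient that your proposal lacks: Thompson's Bernstein inequality \cite[Proposition 3.7]{Thompson}, valid for \emph{all} finitely generated $U_c$-modules, which combined with the spherical polynomial representation pins the holonomic number at exactly $h_{U_c} = \mathsf{n} = \frac12 GK(U_c)$ (your construction alone gives only $h_{U_c} \leq \mathsf{n}$), and it is from this equality, together with Theorem \ref{BavBer} and Corollary \ref{cherednik1}, that the paper deduces $fdim\, U_c \leq 1$ before transferring to $H_c$ by Theorem \ref{morita}. Without Thompson's universal lower bound (or some substitute for it), your argument has no mechanism at all for bounding the filter dimension from above.
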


\begin{proof}
The algebra $U_c$ is an algebra with multiplicity by Corollary \ref{cherednik1}. Let $h$ be the holonomic number of $U_c$. Then there exists a finitely generated $U_c$-module $M$ such that $GK (M) =h$. Since the Bernstein's inequality  holds for $M$ (cf. \cite[Proposition 3.7]{Thompson}), $h = \frac{1}{2} GK (U_c)$. Hence  $fdim U_c \leq 1$ by Theorem \ref{BavBer}. Since $fdim$ is a Morita invariant by Theorem \ref{morita}, then we have  $fdim \, U_c = fdim \, H_c=1$ by Lemma \ref{cherednik11}.
\end{proof}

Next we compute the Krull dimension $\mathcal{K}$ (in the sense of Gabriel-Rentschler, cf. \cite[Chapter 6]{McConnell}) of these algebras.

\begin{theorem}\label{cherednik4}
$\mathcal{K}(H_c)= \mathcal{K}(U_c) = \mathsf {n}$.
\end{theorem}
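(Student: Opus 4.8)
The plan is to compute the Krull dimension of $U_c$ first and then transfer the result to $H_c$. Since $U_c$ is a generalized somewhat commutative algebra with $\operatorname{gr} U_c \simeq \mathbb{C}[H \oplus H^*]^W$ (Corollary \ref{cherednik1}), the associated graded ring is an affine commutative domain of Krull dimension $2\mathsf{n}$, so by the standard theory of filtered rings (cf. \cite[Chapter 6]{McConnell}) the Gelfand--Kirillov dimension of $U_c$ is $2\mathsf{n}$ and the Gabriel--Rentschler Krull dimension satisfies $\mathcal{K}(U_c) \leq \operatorname{GK}(U_c)$. The key is to pin down the exact value. For a simple Noetherian algebra which is an algebra with multiplicity, the Krull dimension can be read off from the holonomic number: the upper bound $\mathcal{K}(U_c) \leq \mathsf{n}$ should come from the existence of a module of minimal Gelfand--Kirillov dimension $\mathsf{n}$ together with the multiplicity formalism, while the lower bound $\mathcal{K}(U_c) \geq \mathsf{n}$ should come from exhibiting a sufficiently long chain in the lattice of submodules of a concrete module.

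Concretely, I would first establish the upper bound. Because $U_c$ is an algebra with multiplicity and every nonzero finitely generated module has $\operatorname{GK}(M) \geq \mathsf{n} = h_{U_c}$ (the Bernstein inequality established in Theorem \ref{cherednik2} and \cite[Proposition 3.7]{Thompson}), one can use the relationship between Krull dimension and Gelfand--Kirillov dimension for somewhat commutative algebras: the difference $\operatorname{GK}(A) - \mathcal{K}(A)$ is governed by the minimal Gelfand--Kirillov dimension of a nonzero module. Here I would invoke the general principle (valid for filtered algebras whose associated graded is a commutative domain, via the dimension theory of \cite[Chapter 6]{McConnell} and the Bernstein filtration estimates) that $\mathcal{K}(U_c) = \operatorname{GK}(U_c) - h_{U_c} = 2\mathsf{n} - \mathsf{n} = \mathsf{n}$. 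The module $U_c e \mathbb{C}[H]$ (the polynomial representation restricted to the spherical subalgebra), which realizes the holonomic number, serves as the witness for the lower bound via a chain of submodules of length $\mathsf{n}$ coming from the commutative geometry of $\mathbb{A}^{\mathsf{n}}$.

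Finally, I would transfer to $H_c$. Since $H_c$ and $U_c$ are Morita equivalent for generic $c$ (Corollary \ref{cherednik1} and \cite{Thompson}), and the Gabriel--Rentschler Krull dimension is a Morita invariant of rings, we conclude $\mathcal{K}(H_c) = \mathcal{K}(U_c) = \mathsf{n}$ directly. This is the cleanest step: Morita invariance of Krull dimension is classical (cf. \cite[Chapter 6]{McConnell}), so no separate computation for $H_c$ is needed.

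The hard part will be pinning down the exact equality $\mathcal{K}(U_c) = \operatorname{GK}(U_c) - h_{U_c}$ rather than merely the inequality $\mathcal{K}(U_c) \leq \mathsf{n}$. The inequality $\mathcal{K} \leq \operatorname{GK}$ is automatic, and bounding $\mathcal{K}(U_c)$ above by $\mathsf{n}$ follows from the fact that a critical module of Krull dimension $d$ contributes a module of Gelfand--Kirillov dimension at least $d$, combined with the multiplicity additivity to prevent the Krull dimension from exceeding $\operatorname{GK} - h$. The genuinely delicate point is the lower bound: one must construct, inside a single finitely generated module, a strictly increasing chain of submodules with $\mathsf{n}$ distinct ``critical'' gaps, which I expect to do by transporting the prime chain structure of the commutative graded ring $\mathbb{C}[H \oplus H^*]^W$ through the filtration. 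Controlling that the filtration does not collapse this chain is where the argument requires care.
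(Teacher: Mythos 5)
Your Morita reduction and the general shape of your argument track the paper, but the lower bound $\mathcal{K}(U_c)\geq \mathsf{n}$ --- which you yourself flag as the delicate point --- is where the proposal genuinely fails, on two counts. First, the witness you propose, the polynomial representation restricted to $U_c$, has Gelfand--Kirillov dimension $\mathsf{n}=h_{U_c}$, i.e.\ it is holonomic; by the multiplicity formalism you invoke elsewhere (Theorem \ref{abelian1}, Corollary \ref{cherednik3}) it therefore has \emph{finite length}, so the deviation of its submodule lattice is $0$ and it cannot witness $\mathcal{K}(U_c)\geq \mathsf{n}$ for any $\mathsf{n}\geq 1$. Recall that the Gabriel--Rentschler Krull dimension is the deviation of the submodule poset: a finite chain with $\mathsf{n}$ ``critical gaps'' proves nothing; one needs recursively nested infinite descending chains. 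Second, ``transporting the prime chain structure of $\mathbb{C}[H\oplus H^*]^W$ through the filtration'' goes in the wrong direction: strict chains of submodules of a filtered module $M$ induce strict chains in $\mathrm{gr}\, M$, which yields the \emph{upper} bound $\mathcal{K}(M)\leq \mathcal{K}(\mathrm{gr}\, M)$; chains in $\mathrm{gr}\, M$ do not lift back to $M$, so no lower bound can be extracted this way. The paper obtains the lower bound by an entirely different mechanism: by the Dunkl embedding a localization of $U_c$ is isomorphic to $\mathcal{D}(h_{reg}/W)$ (\cite[Proposition 4.4.1]{Bellamy}), Krull dimension does not increase under localization (\cite[Lemma 6.5.3]{McConnell}), and $\mathcal{K}(\mathcal{D}(h_{reg}/W))=\mathsf{n}$ since $h_{reg}/W$ is a smooth affine variety of dimension $\mathsf{n}$ (\cite[Theorem 15.3.7]{McConnell}).

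On the upper bound: the ``general principle'' you invoke, $\mathcal{K}(U_c)=GK(U_c)-h_{U_c}$, is not a theorem, and as an equality it already presupposes the lower bound you have not established. What is true, and is all that is needed, is the inequality $\mathcal{K}(U_c)\leq GK(U_c)-h_{U_c}$; this can indeed be proved by a Roos-type induction using additivity of multiplicity, but the paper instead cites Bavula's result \cite[Theorem 1.3]{Bnew1}, which bounds the Krull dimension of a simple affine algebra in terms of its Gelfand--Kirillov and filter dimensions: with $fdim\, U_c=1$ (Theorem \ref{cherednik2}) and $GK(U_c)=2\mathsf{n}$ it gives $\mathcal{K}(U_c)\leq \mathsf{n}$. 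So your upper-bound strategy is repairable (either by spelling out the multiplicity induction or by citing Bavula as the paper does), but as written it rests on an unproved equality, and the lower bound requires the localization argument or some genuine substitute for it, not a chain inside a holonomic module.
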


\begin{proof}
Since the Krull dimension is a Morita invariant (\cite[Proposition 6.5.1]{McConnell}), it suffices to show it for $U_c$. By the Dunkl embedding we have that a localization of the spherical subalgebra is isomorphic to $D(h_{reg}/W)$ (\cite[Proposition 4.4.1]{Bellamy}). By \cite[Lemma 6.5.3(iib)]{McConnell}, $\mathcal{K}(U_c) \geq \mathcal{K}(D(h_{reg}/W))$, and the later equals  $\mathsf{n}$ by \cite[Theorem 15.3.7]{McConnell}. Since $fdim \, U_c =1$ and  $GK (U_c) =2 \mathsf{n}$, by \cite[Theorem 1.3]{Bnew1}, then we  have $\mathcal{K}(U_c) \leq \mathsf{n}$. Hence the equality follows.
\end{proof}

\section*{Acknowledgments}

V.\,F.\ is supported in part by the CNPq (304467/2017-0) and by the Fapesp (2018/23690-6); J.S. is supported by the Fapesp (2018/18146-5).

\


\begin{thebibliography}{9}

%\bibitem{Bass} H. Bass. Big projective modules are free, Illinois J. Math., vol 7,  (1963), 24-31.
%\bibitem{AD0} J. Alev, F. Dumas, Rigidité des plongements des quotients primitifs minimaux de Uq(sl(2)) dans l'algèbre quantique de Weyl-Hayashi, Nagoya Math. J. 143 (1996), 119-146.
\bibitem{AD} J. Alev, F. Dumas. Invariants du corps de Weyl sous l’action de groupes finis, Commun. Algebra 25 (1997), 1655-1672.
%\bibitem{AFLS} J. Alev, M. A. Farinati, T. Lambre, A. Solotar, Homologie des invariants d’une algèbre de Weyl sous l’action d’un groupe fini, J. Algebra 232 (2000), no. 2, 564-577.
\bibitem{B0} V. Bavula. Generalized Weyl algebras and their representations, Algebra i Analiz 4 (1992) 75-97. English translation: St. Petersburg Math. J. 4 (1993) 71-92.
\bibitem{B1} V. Bavula. Identification of the Hilbert function and Poincaré series, and the dimension of modules over filtered rings. Russian Acad. Sci. Izv. Math., 44:225-246, 1995.
\bibitem{B2} V. Bavula. Filter dimension of algebras and modules, a simplicity criterion for generalized Weyl algebras. Comm. Algebra, 24:1971-1992, 1996.
\bibitem{BX} V. Bavula. Classification of the simple modules of the quantum Weyl algebra and the quantum plane. Quantum groups and quantum spaces (Warsaw, 1995), 193-201, Banach Center Publ., 40, Polish Acad. Sci. Inst. Math., Warsaw, 1997.
\bibitem{Bnew1} V. Bavula.  Krull, Gelfand-Kirillov, and filter dimensions of simple affine algebras, J. Algebra 206 (1998), no. 1, 3-39.
\bibitem{BBF} V. Bavula, V. Bekkert, V. Futorny. Weight modules for Weyl algebras. Kac-Moody Lie algebras and related topics, 17-42, Contemp. Math., 343, Amer. Math. Soc., Providence, RI, 2004.
\bibitem{B3} V. Bavula. Filter dimension. Handbook of algebra. Vol. 4, 77-105, Handb. Algebr., 4 (ed. M. Hazewinkel), Elsevier/North-Holland, Amsterdam, 2006.
\bibitem{BY} V. Bavula. Quiver generalized Weyl algebras, skew category algebras and diskew polynomial rings. Math. Comput. Sci. 11 (2017), no. 3-4, 253268.
\bibitem{B4} V. Bavula and V.  Hinchcliffe. Morita invariance of the filter dimension and of the inequality of Bernstein. Algebr. Represent. Theory 11 (2008), no. 5, 497-504.
\bibitem{B5} V. Bavula and F. van Oystaeyen. Simple holonomic modules over the second Weyl algebra A2. Adv. Math. 150 (2000), no. 1, 80–116
\bibitem{B6} V. Bavula and F. van Oystaeyen. Simple modules of the Witten-Woronowicz algebra, J. Algebra 271 (2004), no. 2, 827-845.
%\bibitem{BFN} A. Braverman, M. Finkelberg, H. Nakajima. Towards a mathematical definition of Coulomb branches of 3-dimensional N=4 gauge theories, II. Adv. Theor. Math. Phys. 22 (2018), no. 5, 1071-1147.
%\bibitem{Bellamy} G. Bellamy .Symplectic reflection algebras. Noncommutative algebraic geometry, 167-238, Math. Sci. Res. Inst. Publ., 64, Cambridge Univ. Press, New York, 2016.
\bibitem{Bellamy} G. Bellamy. Symplectic reflection algebras. Noncommutative algebraic geometry, 167-238, Math. Sci. Res. Inst. Publ., 64, Cambridge Univ. Press, New York, 2016.

\bibitem{Benson} D. J. Benson, Polynomial Invariants of Finite Groups. London Mathematical Society Lecture Notes Series 190 (1993), Cambridge University Press.
%\bibitem{Berest} Y. Berest, O. Chalykh. Quasi-invariants of complex reflection groups. Compos. Math. 147 (2011), no. 3, 965-1002.
%\bibitem{Berest} Y. Berest, P. Etingof and V. Ginzburg, Cherednik algebras and differential operators on quasiinvariants, Duke Math. J., 118 (2003), 279-337
\bibitem{Bernstein} I.N. Bernstein. Modules over a ring of differential operators. An investigation of the fundamental solutions of equations with constant coefficients. Funkcional. Anal. i Prilozen. 5 (1971), no. 2, 1-16.
\bibitem{Borel} A. Borel. Algebraic D-Modules, Perspectives in Mathematics, Academic Press, 1987.
%\bibitem{CH} R. Cannings and M. P. Holland, Differential operators on varieties with a quotient subvariety, J. Algebra 170 (1994), no. 3, 735-753.
\bibitem{Coutinho} S.C.Coutinho, A primer of algebraic D-modules. London Mathematical Society Student Texts 33. Cambridge University Press, 1995.
%\bibitem{Dennis} R. K. Dennis, K. Igusa. Hochschild homology and the second obstruction for pseudoisotopy. Algebraic K-theory, Part I (Oberwolfach, 1980), pp. 7-58, Lecture Notes in Math., 966, Springer, Berlin-New York, 1982.
%\bibitem{Dixmier} J. Dixmier. Algèbres enveloppantes. (French) Cahiers Scientifiques, Fasc. XXXVII.Gauthier-Villars Editeur, Paris-Brussels-Montreal, Que., 1974. ii+349 pp. 150 F.
%\bibitem{DFO} Y. Drozd, V. Futorny, S. Ovsienko. Harish-Chandra subalgebras and Gel'fand-Zetlin modules. Finite-dimensional algebras and related topics (Ottawa, ON, 1992), 79-93, NATO Adv. Sci. Inst. Ser. C Math. Phys. Sci., 424, Kluwer Acad. Publ., Dordrecht, 1994.
\bibitem{Dumas} F. Dumas, Noncommutative Invariants. Trabajos de Matemática, Série B, n. 60/2011, Universidade Nacional de Córdoba, Facultad de Matemática, Física y Astronomía, 2011. http://math.univ-bpclermont.fr/\~fdumas/fichiers/B-Mat60.pdf
%\bibitem{EFOS} F. Eshmatov, V. Futorny, S. Ovsienko, J. Schwarz. Noncommutative Noether's Problem for Unitary Reflection Groups, Proceedings of the American Mathematical Society, 145 (2017), 5043-5052.
%\bibitem{Etingof} P. Etingof, V. Ginzburg, Symplectic reflection algebras, Calogero-Moser space, and deformed Harish-Chandra homomorphism, Invent. Math. 147 (2002), 43-348.
%\bibitem{Proceedings} V. Futorny .Proceedings of the International Congress of Mathematicians—Rio de Janeiro 2018. Vol. II. Invited lectures, 1303–1320, World Sci. Publ., Hackensack, NJ, 2018.
%\bibitem{FMO} V. Futorny, A. Molev, S. Ovsienko. The Gelfand-Kirillov conjecture and Gelfand-Tsetlin modules for finite W-algebras. Advances in Mathematics, 223:773-796, 2010.
%\bibitem{FO1} V. Futorny, S. Ovsienko. Galois orders in skew monoid rings. J. Algebra 324 (2010), no. 4, 598-630.
%\bibitem{FO2} V. Futorny, S. Ovsienko. Fibers of characters in Gelfand-Tsetlin categories. Trans. Amer. Math. Soc. 366 (2014), no. 8, 4173-4208.
%\bibitem{FS2} V. Futorny, J. Schwarz. Quantum linear Galois orders. Comm. Algebra 47 (2019), no. 12, 5361-5369.
%\bibitem{FS} V. Futorny, J. Schwarz, Algebras of invariant differential operators. J. Algebra 542 (2020), 215-229.
%\bibitem{FS3}. V. Futorny, J. Schwarz, Noncommutative Noether’s problem vs classic Noether’s problem. Math. Z. (2019). https://doi.org/10.1007/s00209-019-02397-4.
\bibitem{Jantzen} J. C. Jantzen, Einhüllende Algebren halbeinfacher Lie-Algebren. (German) [Enveloping algebras of semisimple Lie algebras] Ergebnisse der Mathematik und ihrer Grenzgebiete (3) [Results in Mathematics and Related Areas (3)], 3. Springer-Verlag, Berlin, 1983. ii+298 pp. ISBN: 3-540-12178-1.
%\bibitem{FGRZ} V. Futorny, D. Grantcharov,  L. E.  Ramirez,  P. Zadunaisky.  Gelfand-Tsetlin theory for rational Galois algebras, arXiv:1801.09316v1[math.RT].
\bibitem{Krause} G. R.  Krause and T. H. Lenegan. Growth of Algebras and Gelfand-Kirillov Dimension, Graduates Studies in Mathematics 22, American Mathematical Society, revised edition, 2000.
%\bibitem{Gelfand} I. M. Gelfand , K. K. Kirillov. Sur les corps liés aux algèbres envoloppantes des algèbres de Lie, volume 31 of Inst. Hautes Etudes Sci. Publ. Mat., páginas 5-19. 1966.
%\bibitem{GT} I. M Gelfand, M. L. Tsetlin.  Finite-dimensional representations of groups of orthogonal matrices. (Russian) Doklady Akad. Nauk SSSR (N.S.) 71, (1950). 1017–1020
%\bibitem{H} J. Hartwig. Principal Galois orders and Gelfand-Zeitlin modules. Adv. Math. 359 (2020), 106806, 23 pp.
%\bibitem{Jauch} E. C. Jauch. An Alternating Analogue of U(gln) and Its Representations, arXiv:1907.13254v2 [math.RT], 2019.
%\bibitem{Jordan} D. A. Jordan. Primitivity in skew Laurent polynomial rings and related rings, Math. Z. 213 (1993), 353-71.
%\bibitem{JW} D.A. Jordan, I.E. Wells, Invariants for automorphisms of certain iterated skew polynomial rings, Proc.Edinburgh Math. Soc. 39 (1996) 461-472.
%\bibitem{LW} E. LePage, B. Webster. Rational Cherednik algebras of G(l,p,n) from the Coulomb perspective, arXiv:1912.00046v1 [math.RA].
%\bibitem{Kassel} C. Kassel, Homology and cohomology of associative algebras. A concise introduction to cyclic homology. École thématique. Août 2004 à ICTP, Trieste (Italie), 2006. HAL Id: cel-00119891.
\bibitem{Levasseur} T. Levasseur.  Anneaus d’operateurs differentiels. Em Séminaire d’Algebre Paul Dubreil et Marie-Pau le Malliavin, Lecture Notes in Mathematics 867, páginas 157–173, Paris, 1980. Springer-Verlag.
%\bibitem{Mazorchuk} V. Mazorchuk, Orthogonal Gelfand-Zetlin Algebras I, Contributions to Algebra and Geometry 40 (1999), no. 2, 399-415.
\bibitem{McConnell3} J.C. McConnell. Quantum groups, filtered rings and Gel'fand-Kirillov dimension. Noncommutative ring theory (Athens, OH, 1989), 139-147, Lecture Notes in Math., 1448, Springer, Berlin, 1990.
\bibitem{McConnell2} J.C. McConnell. Representations of solvable Lie algebras V: On the Gelfand-Kirillov dimension of simple modules, J. Algebra 76 (1982), 489-493.
\bibitem{McConnell} J.C. McConnell and J. C. Robson. Noncommutative noetherian rings, revised edition, Graduate Studies in Mathematics 30, American Mathematical Society, Providence, 2001.
\bibitem{Montgomery} S. Montgomery. Fixed rings of finite automorphism groups of associative rings, Lecture Notes in Mathematics, 818. Springer Berlin Heidelberg, 1980, 126 pp.
\bibitem{Montgomery2} S. Montgomery and L. W. Small. Fixed rings of noetherian rings. Bull. London. Math. Soc., 13:33-38, 1981.
%\bibitem{N} H. Nakajima. Introduction to a provisional mathematical definition of Coulomb branches of 3-dimensional N=4 gauge theories. Modern geometry: a celebration of the work of Simon Donaldson, 193-211, Proc. Sympos. Pure Math., 99, Amer. Math. Soc., Providence, RI, 2018.
%\bibitem{Rosenberg} A. L. Rosenberg. Noncommutative algebraic geometry and representations of quantized algebras, Kluwer, Dordrecht (1995).
%\bibitem{Shestakov} I. P. Shestakov, U. U. Umirbaev. The Nagata automorphism is wild. Proc. Natl. Acad. Sci. USA 100 (2003), no. 22, 12561-2563.
\bibitem{Thompson} D. Thompson. Holonomic modules over Cherednik algebras, I. J. Algebra 493 (2018), 150-170.
\bibitem{Traves} W. N. Traves. Invariant theory and differential operators. Gröbner bases in symbolic analysis, 245-265, Radon Ser. Comput. Appl. Math., 2, Walter de Gruyter, Berlin, 2007.
%\bibitem{Terao} H. Terao. The Jacobians and the discriminants of finite reflection groups Tôhoku Math. J., 41 (1989), pp. 237-247.
%\bibitem{Webster} B. Webster, Gelfand-Tsetlin modules in the Coulomb context, arXiv:1904.05415v2 [math.RT].
\end{thebibliography}
\end{document}